\DeclareMathAlphabet{\mathpzc}{OT1}{pzc}{m}{it}
\newcommand\C{\mathbb{C}}
\newcommand\Z{\mathbb{Z}}
\newcommand\Q{\mathbb{Q}}
\newcommand\R{\mathbb{R}}
\newcommand\N{\mathbb{N}}
\newcommand\g{\mathfrak{g}}
\newcommand\fs{\mathfrak{s}}
\newcommand\fp{\mathfrak{p}}
\newcommand\fq{\mathfrak{q}}
\newcommand\fh{\mathfrak{h}}
\newcommand\fn{\mathfrak{n}}
\newcommand\fb{\mathfrak{b}}
\newcommand\fsl{\mathfrak{sl}}
\newcommand\fsp{\mathfrak{sp}}
\newcommand\fosp{\mathfrak{osp}}
\newcommand\fo{\mathfrak{o}}
\newcommand\fgl{\mathfrak{gl}}
\newcommand\fu{\mathfrak{u}}
\newcommand\fk{\mathfrak{k}}
\newcommand\sA{\mathsf{A}}
\newcommand\D{\Delta}
\newcommand\cA{\mathcal{A}}
\newcommand\cF{\mathcal{F}}
\newcommand\cC{\mathcal{C}}
\newcommand\cB{\mathcal{B}}
\newcommand\cL{\mathcal{L}}
\newcommand\cS{\mathcal{S}}
\newcommand\cD{\mathcal{D}}
\newcommand\cW{\mathcal{W}}
\newcommand\cO{\mathcal{O}}
\newcommand\bx{\mathbf{x}}
\newcommand\bV{\mathbf{V}}
\newcommand\bsp{\mathbf{sp}}
\newcommand\bU{\mathbf{U}}
\newcommand\bL{\mathbf{L}}
\DeclareMathOperator{\im}{im} 
\DeclareMathOperator{\Hom}{Hom}
\DeclareMathOperator{\Ext}{Ext}
\DeclareMathOperator{\End}{End}
\DeclareMathOperator{\Int}{Int}
\DeclareMathOperator{\Span}{Span}
\DeclareMathOperator{\Id}{Id}
\DeclareMathOperator{\Supp}{Supp} 
\DeclareMathOperator{\Ind}{Ind}
\DeclareMathOperator{\Ker}{Ker}
\theoremstyle{plain}
\newtheorem{theo}{Theorem}[section]
\newtheorem*{theo*}{Theorem}
\newtheorem{prop}[theo]{Proposition}
\newtheorem{lem}[theo]{Lemma}
\newtheorem{cor}[theo]{Corollary}
\theoremstyle{definition}
\newtheorem*{rem*}{Remark}
\newtheorem{rem}[theo]{Remark}
\newtheorem{example}[theo]{Example}
\numberwithin{equation}{section}
\newcommand\xqed[1]{%
  \leavevmode\unskip\penalty9999 \hbox{}\nobreak\hfill
  \quad\hbox{#1}}
\newcommand\demo{\xqed{$\blacksquare$}}
\newcommand{\comments}[1]{ \begin{center} \parbox{5 in}{{\bf {\footnotesize Comments:  }}{\footnotesize \textit{#1}}} \end{center}}
\newcommand{\comments}[1]{}
\newcommand{\details}[1]{\smallskip \color{blue} \begin{footnotesize} \textbf{Details:} #1 \end{footnotesize} \color{black}}
\newcommand{\details}[1]{}
\begin{document}
%
\title[Integrable bounded weight modules of Lie superalgebras at infinity]{Integrable bounded weight modules of classical Lie superalgebras at infinity}
\author{Lucas Calixto}
\address{L.~Calixto: Department of Mathematics, Federal University of Minas Gerais, Belo Horizonte, Brazil}
\email{lhcalixto@ufmg.br}

\author{Ivan Penkov}
\address{I.~Penkov: Jacobs University Bremen Campus Ring 1, 28759 Bremen, Germany}
\email{i.penkov@jacobs-university.de}


\subjclass[2010]{17B65, 17B10}


\keywords{Weight modules, direct limit Lie superalgebras, bounded modules}

\begin{abstract}
We classify integrable bounded simple weight modules over classical Lie superalgebras at infinity. We also study the categories of such modules, and we prove that for most of the classical Lie superalgebras at infinity the respective category is semisimple.
\end{abstract}

\maketitle \thispagestyle{empty}


%
\section{Introduction}
In the last decade there has been an active study of various categories of modules over finitary simple Lie algebras. Over the field of complex numbers $\C$, up to isomorphism there are three such Lie algebras: $\fsl(\infty)$, $\fsp(\infty)$ and $\fo(\infty)$ \cite{Bar99}. In \cite{PS11, D-CPS16} categories of integrable modules have been studied. More recently, in \cite{Nam17, CP19, PS19}, analogs of the category $\cO$ have been investigated. 

In \cite{Ser11}, V. Serganova has demonstrated that passing to the super setting is very useful. In particular, she showed that the equivalence of the categories of tensor modules over $\fo(\infty)$ and $\fsp(\infty)$, discovered in \cite{SS15} and \cite{D-CPS16}, admits a natural explanation in terms of the category of tensor modules over the Lie superalgebra $\fosp(\infty | \infty)$. Moreover, this latter category turns out to be equivalent to both former categories.

Motivated by this, we decided to study the extension, to the Lie superalgebra setting, of the recent classification of integrable bounded simple weight modules of $\fsl(\infty)$, $\fsp(\infty)$ and $\fo(\infty)$ obtained in \cite{GP20}. The Lie superalgebras we consider are listed in Table~\ref{table1} below. Beyond the technical challenge of classifying integrable bounded simple weight modules over these Lie superalgebras, we have been interested in the respective categories of integrable bounded weight modules. Over finitary Lie algebras, the category of bounded weight modules is semisimple due to an extension of Hermann Weyl's semisimplicity theorem proved by the second author and V. Serganova in \cite{PS11}. It is natural to ask whether semisimplicity holds also in the superalgebra case. We show that the categories of integrable bounded weight modules are indeed semisimple for all superalgebras $\g$ we consider, except for $\g$ isomorphic to $\fsl(\infty | 1)$ or to $\fq(\infty)$ where the category is ``almost'' semisimple. This semisimplicity result shows how special integrable bounded weight modules are.

The paper is organized as follows. In Section~\ref{sec:prel} we give some relevant basic definitions. In Section~\ref{sec:int.bound.gl} we discuss the classification of integrable bounded simple weight modules of the Lie algebra $\fgl(\infty)$. Our main classification result is presented in Section~\ref{sec:classification}. The categories of integrable bounded weight modules for the various Lie superalgebras $\g$ are discussed in Section~\ref{sec:blocks}. Finally, in the Appendix, we discuss the $\Ext$'s in the category of weight modules and provide a sufficient condition for splitting of extensions of locally simple $\g$-modules in a more general setting.

\medskip

\paragraph{\textbf{Notation}} Set $\Z^\times := \Z\setminus \{0\}$. All vector spaces, algebras, and tensor products are defined over the field of complex numbers $\C$, unless otherwise stated. The superscript $^*$ always indicates dual vector space. For any Lie superalgebra $\fk = \fk_{\bar 0}\oplus \fk_{\bar 1}$, set $\fk':=[\fk, \fk]$, and denote by $\bU(\fk)$ the universal enveloping algebra of $\fk$. If $T\subseteq \bU(\fk)$ is a subset, then we let $C_{\bU(\fk)}(T) := \{x\in \bU(\fk)\mid [x,T] = 0\}$ denote the centralizer of $T$ in $\bU(\fk)$. The symbol $\subsetplus$ (or $\supsetplus$) stands for semidirect sum of Lie superalgebras, the round side pointing toward the ideal. By $\langle \cdot \rangle_R$ we denote span over a ring $R$. If $M = M_{\bar 0}\oplus M_{\bar 1}$ is a $\Z_2$-graded vector space, then $\Pi M$ is the space with changed parity, i.e., $(\Pi M)_{\bar 0} = M_{\bar 1}$ and $(\Pi M)_{\bar 1} = M_{\bar 0}$. The parity of a homogeneous vector $v\in M$ will be denoted by $|v|\in \Z_2$, and the dimension of $M$ is denoted by $\dim M_{\bar 0}|\dim M_{\bar 1}$. Unless otherwise stated, by  homomorphisms of $\Z_{2}$-graded vector spaces we mean linear transformations that preserve parity.  For $a\in \Z_{>0}$, the $a$-th symmetric and exterior powers of a $\Z_2$-graded vector space $M$ are given, respectively, by
	\[
S^a M:=\bigoplus_{i+j=a} {\sf S}^iM_{\bar 0}\otimes {\sf \Lambda}^jM_{\bar 1},\quad \Lambda^a M:=\bigoplus_{i+j=a}{\sf \Lambda}^i M_{\bar 0}\otimes {\sf S}^jM_{\bar 1},
	\]
where ${\sf S}^i$ and ${\sf \Lambda}^i$ denote the usual $i$-th symmetric and exterior powers of a vector space.

\iftoggle{detailsnote}{
\medskip

}{}

\medskip

\paragraph{\textbf{Acknowledgments}} I.P. has been supported in part by  DFG grant PE 980-7/1. L.C. was supported by CAPES grant 88881.119190/2016-01. L.C. acknowledges the hospitality of Jacobs University. The authors would like to thank Vera Serganova for helpful discussions and a referee for the thorough reading of our paper.

\section{Preliminaries}\label{sec:prel}

Throughout the paper we denote by $\g = \varinjlim \g(n)$ one of the  Lie superalgebras defined as the direct limit of the following embeddings $\g(n) \hookrightarrow \g(n+1)$:
\begin{enumerate}\label{def:dir.lim.superalg.}
\item \label{def:dir.lim.superalg.A1} $\fsl(\infty|m) : \fsl(n|m)\hookrightarrow \fsl(n+1|m)$;
\item \label{def:dir.lim.superalg.A2} $\fsl(\infty|\infty) : \fsl(n+1|n)\hookrightarrow \fsl(n+2|n+1)$;
\item \label{def:dir.lim.superalg.B1} $\fosp_B(\infty|2k) : \fosp(2n+1|2k)\hookrightarrow \fosp(2n+3|2k)$;
\item \label{def:dir.lim.superalg.B2} $\fosp_B(\infty|\infty) : \fosp(2n+1|2n)\hookrightarrow \fosp(2n+3|2n+2)$;
\item \label{def:dir.lim.superalg.B3} $\fosp_B(m|\infty) : \fosp(m|2n)\hookrightarrow \fosp(m|2n+2)$, for $m$ odd;
\item \label{def:dir.lim.superalg.C} $\fosp_C(2|\infty) : \fosp(2|2n)\hookrightarrow \fosp(2|2n+2)$;
\item \label{def:dir.lim.superalg.D1} $\fosp_D(\infty|2k) : \fosp(2n|2k)\hookrightarrow \fosp(2n+2|2k)$;
\item \label{def:dir.lim.superalg.D2} $\fosp_D(\infty|\infty) : \fosp(2n|2n)\hookrightarrow \fosp(2n+2|2n+2)$;
\item \label{def:dir.lim.superalg.D3} $\fosp_D(m|\infty) : \fosp(m|2n)\hookrightarrow \fosp(m|2n+2)$, for $m$ even, $m\neq 2$;
\item \label{def:dir.lim.superalg.p} $\bsp(\infty) : \bsp(n)\hookrightarrow \bsp(n+1)$;
\item \label{def:dir.lim.superalg.q} $\fq(\infty) : \fq(n)\hookrightarrow \fq(n+1)$,
\end{enumerate}
see \cite{Pen04} for details. The first two embeddings are given respectively by
\begin{equation}\label{eq:emb.block.matrices}
\left(\begin{array}{c|c}
    A & B  \\
\hline
    C & D
  \end{array}\right) \mapsto
\left(\begin{array}{cr|l}
	0 & {\bf 0} & {\bf 0} \\
    {\bf 0} & A & B  \\
\hline
    {\bf 0} &  C & D
  \end{array}\right)\quad\text{and}\quad \left(\begin{array}{c|c}
    A & B  \\
\hline
    C & D
  \end{array}\right) \mapsto
\left(\begin{array}{cr|lc}
	0 & {\bf 0} & {\bf 0} &0 \\
    {\bf 0} & A & B & {\bf 0} \\
\hline
    {\bf 0} &  C & D & {\bf 0}\\
    0 & {\bf 0} & {\bf 0} & 0
  \end{array}\right),
\end{equation}
where the matrices ${\bf 0}$ are assumed to be of the appropriate size. The embeddings in \eqref{def:dir.lim.superalg.A1}-\eqref{def:dir.lim.superalg.q} are respective restrictions of the embeddings in  \eqref{eq:emb.block.matrices}.  If $\g$ is given by \eqref{def:dir.lim.superalg.A1} or \eqref{def:dir.lim.superalg.A2}, then $\g$ is of \emph{type $A$}; if $\g$ is given by \eqref{def:dir.lim.superalg.B1}, \eqref{def:dir.lim.superalg.B2} or \eqref{def:dir.lim.superalg.B3}, then $\g$ is of \emph{type $B$}; if $\g$ is given by  \eqref{def:dir.lim.superalg.D1}, \eqref{def:dir.lim.superalg.D2} or \eqref{def:dir.lim.superalg.D3}, then $\g$ is of \emph{type $D$}. In all cases except \eqref{def:dir.lim.superalg.q}, $\g$ admits a $\Z$-grading $\g = \bigoplus_{i\in \Z}\g_i$ compatible with the $\Z_2$-grading, i.e. $\g_{\bar 0} = \bigoplus_{2i}\g_i$ and $\g_{\bar 1} = \bigoplus_{2i+1}\g_i$. Table~\ref{table1} shows explicitly the Lie algebras $\g_{\bar 0}$ and $\g_0$. We refer to \cite[Tables on Lie superalgebras, page 342]{FSS00} for a description of $\g(n)_{\bar 0}$ and $\g(n)_0$.

We point out that the pairs $(\fosp_B(\infty|2k), \fosp_D(\infty|2k))$ and $(\fosp_B(\infty|\infty), \fosp_D(\infty|\infty))$ are pairs of isomorphic Lie superalgebras. The reader will check this using the well known fact that the Lie algebras $\fo_B(\infty) := \varinjlim \fo(2n+1)$ and $\fo_D(\infty) := \varinjlim \fo(2n)$ are isomorphic. However, in this paper we consider the Lie superalgebras in a pair separately, as we equip them (see the next section) with non-conjugate Cartan subalgebras. This makes the Lie superalgebras in a pair "different" from the point of view of weight modules.

\begin{table}
\begin{tabular}{c  c c}
\toprule
$\g$ & $\g_{\bar 0}$ & $\g_0$ \\
\midrule
$\fsl(\infty|m)$ & $\fgl(\infty)\oplus \fsl(m)$ & $\fgl(\infty)\oplus \fsl(m)$ \\
$\fsl(\infty | \infty)$ & $\fgl(\infty)\oplus \fsl(\infty)$ & $\fgl(\infty)\oplus \fsl(\infty)$ \\
$\fosp_B(\infty|2k)$ & $\fo_B(\infty)\oplus \fsp(2k)$ & $\fo_B(\infty)\oplus \fgl(k)$ \\
$\fosp_B(\infty|\infty)$ & $\fo_B(\infty)\oplus \fsp(\infty)$ & $\fo_B(\infty)\oplus \fgl(\infty)$ \\
$\fosp_B(m|\infty)$, $m$ odd & $\fo(m)\oplus \fsp(\infty)$ & $\fo(m)\oplus \fgl(\infty)$ \\
$\fosp_C(2|\infty)$ & $\C\oplus \fsp(\infty)$ & $\C\oplus \fsp(\infty)$ \\
$\fosp_D(\infty|2k)$ & $\fo_D(\infty)\oplus \fsp(2k)$ & $\fo_D(\infty)\oplus \fgl(k)$ \\
$\fosp_D(\infty|\infty)$ & $\fo_D(\infty)\oplus \fsp(\infty)$ & $\fo_D(\infty)\oplus \fgl(\infty)$ \\
$\fosp_D(m|\infty)$, $m$ even, $m\neq 2$ & $\fo(m)\oplus \fsp(\infty)$ & $\fo(m)\oplus \fgl(\infty)$ \\
$\bsp(\infty)$ & $\fsl(\infty)$ & $\fsl(\infty)$ \\
$\fq(\infty)$ & $\fgl(\infty)$ & \\
\bottomrule
\smallskip
\end{tabular}
\caption{Classical Lie superalgebras at infinity, their even part and their 0-th degree component} \label{table1}
\end{table}

\subsection{Generalities on $\g$-modules}
We call a $\g$-module $M$ \emph{integrable} if for every $m\in M$, $g\in \g$ one has 
	\[
\dim \langle m, gm, g^2m, \ldots\rangle_\C<\infty.
	\]

Let $\fh\subseteq \g$ denote the \emph{splitting Cartan subalgebra} of diagonal matrices in the Lie algebra $\g_{\bar 0}$ \cite{D-CPS07}. In other words, $\fh$ is the direct limit of the diagonal Cartan subalgebras of the Lie algebras $\g(n)_{\bar 0}$ under the fixed embeddings $\g(n)_{\bar 0}\hookrightarrow  \g(n+1)_{\bar 0}$. A $\g$-module $M$ is a \emph{weight module }(with respect to $\fh$) if
	\[
M = \bigoplus_{\lambda\in \fh^*} M^\lambda,
	\]
where $M^\lambda := \{m\in M\mid hm=\lambda(h)m,\ \forall \ h\in \fh\}$. The \emph{support} of $M$ is the set $\Supp M := \{\lambda\in \fh^*\mid M^\lambda\neq 0\}\subseteq \fh^*$. The elements $\lambda\in \Supp M$ are the \emph{weights of} $M$, and nonzero vectors in $M^\lambda$ are called \emph{weight vectors} of weight $\lambda$. A weight module $M$ is said to be \emph{bounded} if there exists $k\in \Z_{>0}$ such that $\dim M^\lambda \leq k$ for all $\lambda\in \Supp M$.

Under the adjoint action of $\fh$ on $\g$ we have the decomposition
	\[
\g = \g^0\oplus \bigoplus_{\alpha\in \D} \g^\alpha,
	\]
where $\g^0 = \fh$ if $\g\ncong \fq(\infty)$, and $\D := \Supp \g\setminus \{0\}$. The elements of $\D$ are the \emph{roots} of $\g$, and $\D$ is the \emph{root system} of $\g$.  To describe $\D$, we note first that $\g \subseteq \fgl(\infty | \infty) = \varinjlim \fgl(n|n)$ and that matrices in $\fgl(\infty | \infty)$ are indexed by $\Z^\times \times \Z^\times $, where $(0,0)$ is identified with the intersection of the two orthogonal lines that separate the blocks of the matrices in $\fgl(\infty | \infty)$. Let $E_{i,j}\in \fgl(\infty | \infty)$ denote the elementary matrix with entry $1$ at position $(i,j)$ and zeros elsewhere. For any $i\in \Z^\times $ we let $\varepsilon_i\in \fh^*$ be the linear functional defined by $\varepsilon_i(E_{j,j}) = \delta_{i, j}$ for all $j\in \Z^\times $, and we set $\delta_i:=\varepsilon_{-i}$, for any $i\in \Z_{>0}$. We should point out that the $\varepsilon_i$'s could be indexed by an arbitrary countable set, not necessarily $\Z^\times $. We fix $\Z^\times$ for convenience.

The root system of $\g$ is given as follows:
{\footnotesize{\begin{align*}
& \fsl(\infty | m) : \D = \{\varepsilon_i - \varepsilon_j,\ \delta_r - \delta_s,\ \pm(\varepsilon_i - \delta_r)\mid i,j\in \Z_{>0}\cap [0,m],\ r,s\in \Z_{>0}\}; \\
& \fsl(\infty|\infty) : \D = \{\varepsilon_i - \varepsilon_j,\ \delta_r - \delta_s,\ \pm(\varepsilon_i - \delta_r)\mid i,j\in \Z_{>0},\ r,s\in \Z_{>0}\}; \\
& \fosp_B(\infty|2k) :  \D = \{\pm \varepsilon_i \pm \varepsilon_j,\ \pm \delta_r \pm \delta_s,\ \pm 2 \varepsilon_i,\ \pm\delta_r,\ \pm \varepsilon_i \pm \delta_r,\ \pm \varepsilon_i \mid i,j\in \Z_{>0}\cap [0,k],\ r,s\in \Z_{>0}\}; \\
& \fosp_B(\infty|\infty) : \D = \{\pm \varepsilon_i \pm \varepsilon_j,\ \pm \delta_r \pm \delta_s,\ \pm 2 \varepsilon_i,\ \pm\delta_r,\ \pm \varepsilon_i \pm \delta_r,\ \pm \varepsilon_i \mid i,j\in \Z_{>0},\ r,s\in \Z_{>0}\}; \\
& \fosp_B(m|\infty) : \D = \{\pm \varepsilon_i \pm \varepsilon_j,\ \pm \delta_r \pm \delta_s,\ \pm 2 \varepsilon_i,\ \pm\delta_r,\ \pm \varepsilon_i \pm \delta_r,\ \pm \varepsilon_i \mid  i,j\in \Z_{>0},\ r,s\in \Z_{>0}\cap [-m,0]\}; \\
& \fosp_C(2|\infty) : \D = \{\pm \varepsilon_i \pm \varepsilon_j,\ \pm 2 \epsilon_i,\ \pm \varepsilon_i \pm \delta_1 \mid i,j\in \Z_{>0}\}; \\
& \fosp_D(\infty|2k) : \D = \{\pm \varepsilon_i \pm \varepsilon_j,\ \pm \delta_r \pm \delta_s,\ \pm 2 \epsilon_i,\ \pm \varepsilon_i \pm \delta_r \mid  i,j\in \Z_{>0},\ r,s\in \Z_{>0}\cap [-k,0]\}; \\
& \fosp_D(\infty|\infty) : \D = \{\pm \varepsilon_i \pm \varepsilon_j,\ \pm \delta_r \pm \delta_s,\ \pm 2 \epsilon_i,\ \pm \varepsilon_i \pm \delta_r \mid  i,j\in \Z_{>0},\ r,s\in \Z_{>0}\}; \\
& \fosp_D(m|\infty) :  \D = \{\pm \varepsilon_i \pm \varepsilon_j,\ \pm \delta_r \pm \delta_s,\ \pm 2 \epsilon_i,\ \pm \varepsilon_i \pm \delta_r \mid  i,j\in \Z_{>0}\cap [0,m],\ r,s\in \Z_{>0}\}; \\
& \bsp(\infty) :  \D = \{\varepsilon_i - \varepsilon_j,\ -\varepsilon_i - \varepsilon_j,\ \varepsilon_i + \varepsilon_j,\ 2\varepsilon_i)\mid i,j\in \Z_{>0}\}; \\
& \fq(\infty) :  \D = \{\varepsilon_i - \varepsilon_j \mid i,j\in \Z_{>0}\}.
\end{align*}}}

If $\g\ncong \fq(\infty)$, then $\dim \g^\alpha = 1|0$ or $\dim \g^\alpha = 0|1$ for every $\alpha\in \D$. In that case, given $\pm \alpha\in \D$ we fix $X_{\pm \alpha}\in \g^{\pm \alpha}\setminus\{0\}$ so that the nonzero coordinates of $h_\alpha:=[X_\alpha, X_{-\alpha}]\in \fh$ with respect to the basis $\{E_{i,i}\mid i\in \Z^\times\}$ of the subalgebra of diagonal matrices in $\fgl(\infty|\infty)$ are equal to $1$ or $-1$. The root spaces of $\fq(\infty)$ have dimension $1|1$. In addition, here $\g^0 = \fh\oplus \fh_{\bar 1}$, and $\dim \fh_{\bar 1} = 0|\infty$. Finally, for any $\g$ and any $n\in \Z_{>0}$, we define 
	\[
\fh(n): = \fh\cap \g(n), \text{ and } \D(n):=\{\alpha\in \D\mid \g^\alpha\subseteq \g(n)\}.
	\]
	
Let $n,m\in \Z_{>0}\cup \{\infty\}$. Throughout the paper, the expression $\sum_i^n \lambda_i\delta_i + \sum_i^m \mu_i\varepsilon_i$ will be identified with the vector $(\lambda|\mu):=(\ldots, \lambda_2, \lambda_1 | \mu_1,\mu_2,\ldots)\in \C^n\times\C^m$; the vector $(\ldots, c, c | d,d,\ldots)\in \C^n\times\C^m$ with $c,d\in \C$ will be denoted by $(c^{(n)} | d^{(m)})$. Therefore, for $\g=\fgl(n|m)$ or $\g = \fosp(n|m)$ we can identify $\fh(n)^*$ with $\C^n\times \C^m$. If $\g=\fsl(n|m)$ with $n\neq m$, then we also can think of $(\lambda | \mu)\in \C^n\times\C^m$ as a weight of $\g$: we consider the image of $(\lambda | \mu)$ in $\fh(n)^*$ under the projection $(\lambda | \mu) \mapsto (\lambda | \mu) + \C (1^{(n)}|-1^{(m)})$. If $\g =\fsl(n)$ or $\g= \bsp(n)$, then we can think of $\lambda\in \C^n$ as a weight of $\g$ by considering the image of $\lambda$ in $\fh(n)^*$.

In what follows, we normalize the marks of a weight of $\fsl(n)$ in such a way that the last mark is zero. Then we have a well-defined correspondence between weights and partitions.

\subsection{Splitting Borel subalgebras} \emph{Splitting Borel subalgebras} of $\g$ are determined by \emph{triangular decompositions} of $\D$, which in turn are determined  by (non-unique) elements of $(\langle\D\rangle_{\R})^*$ (see \cite[Proposition~2]{DP98}). Namely, a given $\phi\in (\langle\D\rangle_{\R})^*$ determines the decomposition
	\[
\D = \D^-\sqcup \D^+ \text{ where } \D^\pm = \{\alpha\in \D\mid \phi(\alpha)\gtrless 0\}.
	\]	
The set $\D^+$ is called the \emph{set of positive roots} associated to $\phi$. The splitting Borel subalgebra corresponding to this decomposition is $\fb:=\fh\supsetplus \fn$, where
	\[
\fn = \bigoplus_{\alpha\in \D^+} \g^\alpha.
	\]

We now present an explicit description of splitting Borel subalgebras in terms of linear orders on countable sets. Recall that $\delta_i:=\varepsilon_{-i}$ for every $i\in \Z_{>0}$. Suppose $\g=\fsl(\infty|\infty)$. In this case, splitting Borel subalgebras of $\g$ are parameterized by linear orders $\prec$ on $\Z^\times $. More precisely, the set of positive roots corresponding to a linear order $\prec$ is 
\begin{align*}
\D(\prec) & = \{\delta_i - \delta_j\mid -i\prec -j,\ i,j\in \Z_{>0}\}\cup \{\varepsilon_i - \varepsilon_j\mid i\prec j,\ i,j\in \Z_{>0}\} \\
& \cup \{\delta_i - \varepsilon_j\mid -i\prec j,\ i,j\in \Z_{>0}\}.
\end{align*}
If $\g=\fsl(\infty|n)$ or $\fq(\infty)$, then $\Z^\times$ must be replaced respectively by $\Z_{\leq n}^\times$ and $\Z_{>0}^\times$. For $\g = \fosp_B(\infty|\infty)$ splitting Borel subalgebras of $\g$ are parameterized by pairs $(\prec, \sigma)$, where $\prec$ is a linear order on $\Z^\times $ and $\sigma$ is a map $\sigma : \Z^\times \to \{\pm 1\}$. The set of positive roots corresponding $(\prec, \sigma)$ is
\begin{align*}
\D(\prec, \sigma) & = \{\sigma(i)\delta_i - \sigma(j)\delta_j\mid -i\prec -j,\ i,j\in \Z_{>0}\}\cup \{\sigma(i)\delta_i + \sigma(j)\delta_j\mid i\neq j\in \Z_{>0}\} \\
& \cup \{\sigma(i)\varepsilon_i - \sigma(j)\varepsilon_j\mid i\prec j,\ i,j\in \Z_{>0}\}\cup \{\sigma(i)\varepsilon_i + \sigma(j)\varepsilon_j\mid i\neq j\in \Z_{>0}\} \\
& \cup  \{\sigma(i)\delta_i \mid i\in \Z_{>0}\} \cup  \{\sigma(i)\varepsilon_i \mid i\in \Z_{<0}\} 
\cup  \{\sigma(i)2\varepsilon_i \mid i\in \Z_{>0}\} \\
& \cup \{\sigma(i)\delta_i \pm  \sigma(j)\varepsilon_j\mid i\in \Z_{<0},\ j\in \Z_{>0}\}.
\end{align*}
If $\g$ is of type $\fosp_B(\infty|2k)$ or $\fosp_B(m|\infty)$, then $\Z^\times $ gets replaced respectively by $\Z_{\leq k}^\times$ and $\Z_{\geq -m}^\times$. For $\g=\fosp_D(\infty|\infty)$ the construction is analogous to that for $\fosp_B(\infty|\infty)$, however in this case we need an extra condition on $\sigma : \Z^\times \to \{\pm 1\}$: if $\prec$ admits a maximal element $i_0\in \Z_{<0}$ then $\sigma(i_0)=1$. Hence $\D(\prec, \sigma)$ is given similarly to the previous case, but now there are no roots of the form $\sigma(i)\varepsilon_i, \sigma(i)\delta_i$. If $\g$ is of type $\fosp_D(\infty|2k)$, $\fosp_D(m|\infty)$ or $\fosp_C(2|\infty)$, then $\Z^\times $ is replaced by $\Z_{\leq k}^\times$, $\Z_{\geq -m}^\times$ or $\Z_{\geq -1}^\times$, respectively. We point out that for $\fosp_C(2|\infty)$ we do not require the additional condition on the map $\sigma$. Finally, for $\g=\bsp(\infty)$ we replace $\Z^\times $ by $\Z_{>0}$ in the above discussion, and we define
\begin{align*}
\D(\prec, \sigma) & = \{\sigma(i)\varepsilon_i - \sigma(j)\varepsilon_j\mid i\prec j,\ i,j\in \Z_{>0}\}\cup \{\sigma(i)\varepsilon_i + \sigma(j)\varepsilon_j\mid i\neq j\in \Z_{>0}\}  \\
& \cup \{2\varepsilon_i \mid i\in \Z_{>0},\ \sigma(i)=1\}.
\end{align*}

The splitting Borel subalgebra corresponding to $\D(\prec)$ (respectively, $\D(\prec, \sigma)$) is denoted by $\fb(\prec)$ (respectively, $\fb(\prec, \sigma)$), and $\fn(\prec)$ (respectively, $\fn(\prec,\sigma)$) denotes its \emph{locally nilpotent radical}. Moreover, for every $n\in \Z_{>0}$, we set $\fb(\prec_n) :=\fb(\prec)\cap \g(n)$ (respectively, $\fb(\prec_n,\sigma) :=\fb(\prec,\sigma)\cap \g(n)$).

Throughout the paper, we denote by $<$ the standard order on $\Z$.

\subsection{Highest weight modules}
Let $\fb = \fh \supsetplus \fn$ be a splitting Borel subalgebra of $\g$, and $M$ be a weight module. A weight vector $0\neq v\in M^\lambda$ is a \emph{$\fb$-singular vector} if $\fn\cdot v = 0$. If $M$ is a cyclic $\g$-module generated by a $\fb$-singular vector of weight $\lambda$, we say that $M$ is a \emph{$\fb$-highest weight module}, and $\lambda$ is the \emph{$\fb$-highest weight} of $M$. Given an element $\lambda\in \fh^*$, we consider the \emph{Verma type module associated to $\lambda$ and $\fb$}
	\[
M_\fb (\lambda):=\Ind_\fb^\g U^\lambda := \bU(\g)\otimes_{\bU(\fb)} U^\lambda,
	\]
where $U^\lambda$ is a simple $\fb$-module on which $\fh$ acts via $\lambda$ and $\fn$ acts trivially. If $\g\ncong \fq(\infty)$, we require $U^\lambda$ to have dimension $1|0$. If $\g\cong \fq(\infty)$, then the dimension of $U^\lambda$ is $2^{[\# \lambda/2]}$ where $\# \lambda$ denotes the number of nonzero marks of $\lambda$, and $[a]$ denotes the greatest integer in the number $a\in \Q$. The $\g$-module $M_\fb(\lambda)$ admits a unique simple quotient which we denote by $\bL_{\fb}(\lambda)$. Accordingly, $\Pi \bL_{\fb}(\lambda)$ admits a $\fb$-highest weight space of weight $\lambda$ whose dimension is $\dim U^\lambda_{\bar 1}|\dim U^\lambda_{\bar 0}$. 

The Lie superalgebra $\g$ admits a \emph{natural module} $\bV$ with support
	\[ 
\Supp \bV = \begin{cases} 
	\{\delta_i, \varepsilon_i\} & \text{ if } \g = \fsl(\infty | \infty), \fsl(\infty | m) \\
\{\pm \delta_i, 0, \pm \varepsilon_i\} & \text{ if } \g=\fosp_B(\infty|\infty), \fosp_B(m|\infty), \fosp_B(\infty, 2k) \\
\{\pm \delta_1, \pm \varepsilon_i\} & \text{ if } \g=\fosp_C(2|\infty) \\
\{\pm \delta_i, \pm \varepsilon_i\} & \text{ if } \g=\fosp_D(\infty|\infty), \fosp_D(m|\infty), \fosp_D(\infty|2k) \\
      \{\varepsilon_i\} & \text{ if } \g=\fq(\infty) \\
\{\pm \varepsilon_i\} & \text{ if } \g=\bsp(\infty), \\
  \end{cases}
	\]
where the index $i$ runs over the respective obvious subset of $\Z^\times $. To determine $\bV$ up to isomorphism for $\g\neq \fq(\infty)$, $\bsp(\infty)$, we require that the weight spaces with weights $\delta_i$ belong to $\bV_{\bar 0}$. For $\g = \fq(\infty)$, the support determines $\bV$ up to isomorphism, and for $\g = \bsp(\infty)$ the weight spaces $\varepsilon_i$ belong to $\bV_{\bar 0}$. Furthermore, when $\g$ equals $\fsl(\infty | m)$ for $m\in \Z_{\geq 1}\cup \{\infty\}$ or $\fq(\infty)$, then $\g$ admits a \emph{conatural} module $\bV_*$ which is characterized (up to isomorphism) by the requirement that $\Supp \ (\bV_*)_z = -\Supp \bV_z$ for $z\in \Z_{2}$.

\begin{rem}
Throughout the paper, for convenience, if $\g$ is a Lie algebra we write $L_\fb(\lambda)$, $V$, and $V_*$ instead of $\bL_\fb(\lambda)$, $\bV$, and $\bV_*$, respectively.
\demo
\end{rem}

\section{Integrable bounded modules of $\fgl(\infty)$}\label{sec:int.bound.gl}

In what follows let $\fh_\fgl$ and $\fh_\fsl$ denote the Cartan subalgebras consisting of diagonal matrices in $\fgl(\infty) := \varinjlim \fgl(n)$ and $\fsl(\infty) := \varinjlim \fsl(n)$, respectively.

Let $M$ be a weight $\fsl(\infty)$-module such that $M=\bU(\fsl(\infty))\cdot m$ for some $m\in M^\lambda$, where $\lambda\in \Supp M\subseteq \fh_\fsl^*$. For any $c\in \C$ we extend $\lambda$ to an element of $\fh_{\fgl}$, which we denote also by $\lambda$, by setting $\lambda(E_{1,1}):=c$. Now we define the $\fgl(\infty)$-module $M(m,c)$ as follows: $M(m,c)$ equals $M$ as a vector space; the action of $\fsl(\infty)$ on $M(m,c)$ coincides with its action on $M$; the action of $E_{1,1}$ on $m$ is via multiplication by $c$, and, for any $u\in \bU(\fsl(\infty))^\beta$ ($\bU(\fsl(\infty))^\beta$ being a weight space of $\bU(\fsl(\infty))$ with respect to the adjoint $\fsl(\infty)$-module structure)
\begin{equation}\label{eq:extention.formula}
E_{1,1}um := (\beta+\lambda)(E_{1,1}) um = (\beta(E_{1,1})+c) um.
\end{equation}
It is easy to see that the $\mathfrak{gl}(\infty)$-module $M(m,c)$ is well defined. 

\begin{rem}\label{rem:weights.of.M(m,c)}
Notice that any element $\nu\in \Supp M \subseteq \lambda + \Z\D$ can be extended to an element of $\fh_\fgl^*$ via \eqref{eq:extention.formula}: if $\nu = \lambda+\beta$ then $\nu(E_{1,1}) = (c + \beta)(E_{1,1})$. By a slight abuse of notation, we denote such an extension also by $\nu$. Hence, $M(m,c)$ is a weight $\fgl(\infty)$-module. Moreover, since for any $\nu, \nu'\in \Supp M(m,c)$ the weight $\nu-\nu'$ lies in the root lattice of $\fgl(\infty)$ (and hence of $\fsl(\infty)$), we have $\nu\neq \nu'$ if and only if $(\nu-\nu')|_{\fh_\fsl}\neq 0$. This shows that $\Supp M(m,c)$ is obtained by extending $\Supp M$ via \eqref{eq:extention.formula}, and any two $\fh_\fgl$-weights of $M(m,c)$ are equal if and only if their corresponding restrictions to $\fh_\fsl$ are equal.
\demo
\end{rem}

Let $\fk(1)\subset \fk(2)\subset \fk(3)\cdots$ be a sequence of inclusions of Lie superalgebras, and let $\fk = \bigcup_n \fk(n)=\varinjlim \fk(n)$. A $\fk$-module $M$ is \emph{locally simple} if for each $m\in M\setminus\{0\}$ the $\fk(n)$-module $\bU(\fk(n))m$ is simple for $n\gg 0$, and $M = \bigcup_{n\gg 0} \bU(\fk(n))m$.

\begin{lem}\label{lem:loc.simpl=M(c)}
Suppose $M$ is a locally simple weight $\fgl(\infty)$-module. Then, for any $\lambda\in \Supp M|_{\fsl(\infty)} $ and $m\in (M|_{\fsl(\infty)})^\lambda\setminus\{0\}$, there is $c\in \C$ for which $M\cong M|_{\fsl(\infty)} (m,c)$.
\end{lem}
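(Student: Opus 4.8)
The plan is to realize the isomorphism as the identity map on the common underlying vector space: both $M$ and $M|_{\fsl(\infty)}(m,c)$ equal $M$ as vector spaces and carry the same $\fsl(\infty)$-action, so once I produce the correct $c$ it will only remain to match the action of $E_{1,1}$, since $\fgl(\infty)=\fsl(\infty)\oplus\C E_{1,1}$ as a vector space and $\fsl(\infty)$ is an ideal. Before this can even be stated I must know that $M|_{\fsl(\infty)}$ is generated by $m$, so that $M|_{\fsl(\infty)}(m,c)$ is defined. Write $I_n:=\sum_{i=1}^n E_{i,i}$ for the central element of $\fgl(n)$, so that $\fgl(n)=\fsl(n)\oplus\C I_n$. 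Local simplicity gives, for $n\gg 0$, that $\bU(\fgl(n))m$ is a simple $\fgl(n)$-module; as a submodule of the weight module $M$ it is $\fh_{\fgl(n)}$-diagonalizable, and all of its weights differ by roots of $\fgl(n)$, which vanish on $I_n$, so $I_n$ acts by a single scalar $\gamma_n$. Hence $\bU(\C I_n)m=\C m$ and therefore $\bU(\fgl(n))m=\bU(\fsl(n))\bU(\C I_n)m=\bU(\fsl(n))m$. Taking the union over $n$ and using $M=\bigcup_{n\gg 0}\bU(\fgl(n))m$ yields $M|_{\fsl(\infty)}=\bU(\fsl(\infty))m$, as required.

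Next I would check that $m$ is in fact a $\fgl(\infty)$-weight vector and read off $c$ as its $E_{1,1}$-eigenvalue. Inside $\fgl(n)$ one has the identity
\[
E_{1,1}=\tfrac1n I_n+\tfrac1n\sum_{j=2}^n (E_{1,1}-E_{j,j}),
\]
and applying it to $m$ the central term contributes $\tfrac1n\gamma_n\,m$ while each $E_{1,1}-E_{j,j}\in\fh_\fsl$ contributes $\lambda(E_{1,1}-E_{j,j})\,m$, since $m$ has $\fsl(\infty)$-weight $\lambda$. Thus $E_{1,1}m=cm$ with
\[
c:=\tfrac1n\gamma_n+\tfrac1n\sum_{j=2}^n\lambda(E_{1,1}-E_{j,j}),
\]
a scalar that is forced to be independent of $n$ because $E_{1,1}$ is a fixed operator. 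I then extend $\lambda$ to $\fh_\fgl$ by $\lambda(E_{1,1}):=c$, exactly as in the construction.

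It remains to verify that the $E_{1,1}$-action on $M$ obeys \eqref{eq:extention.formula}. For $u\in\bU(\fsl(\infty))^\beta$, writing $u$ as a sum of products of root vectors and using $[E_{1,1},E_{i,j}]=(\delta_{1,i}-\delta_{1,j})E_{i,j}$ gives $[E_{1,1},u]=\beta(E_{1,1})u$; consequently, in the genuine $\fgl(\infty)$-module $M$,
\[
E_{1,1}(um)=[E_{1,1},u]m+u(E_{1,1}m)=\beta(E_{1,1})um+c\,um=(\beta(E_{1,1})+c)um,
\]
which is precisely the defining formula of $M|_{\fsl(\infty)}(m,c)$. Since the vectors $um$ span $M=\bU(\fsl(\infty))m$, the identity map intertwines both $\fsl(\infty)$ and $E_{1,1}$, hence all of $\fgl(\infty)$, and is the desired isomorphism. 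I expect the essential difficulty to be the second step: upgrading $m$ from an $\fsl(\infty)$-weight vector to a genuine $\fgl(\infty)$-weight vector. This is exactly where local simplicity is indispensable, entering through the scalar action of the central elements $I_n$ on the simple modules $\bU(\fgl(n))m$; without it $m$ could a priori be a sum of $E_{1,1}$-eigenvectors and no single $c$ would work.
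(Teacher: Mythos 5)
Your argument is correct and follows the same route as the paper's (much terser) proof: local simplicity makes each $\bU(\fgl(n))m$ a simple $\fgl(n)$-module, whence the central element $I_n$ acts by a scalar, $E_{1,1}m=cm$, and \eqref{eq:extention.formula} follows from the commutation relations. You have simply made explicit the details the paper leaves to the reader.
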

\begin{proof}
Recall that $\fgl(\infty) = \varinjlim \fgl(n)$. Set $M_\ell := \bU(\fgl(\ell))m$ for $\ell\geq 1$. Since $M_\ell$ is a simple $\fgl(\ell)$-module for $\ell\gg 0$, there is $c\in \C$ such that the action of $E_{1,1}$ on $M_\ell$ is given by \eqref{eq:extention.formula}. As $M = \bigcup_{\ell \gg 0} M_\ell$ the result follows.
\end{proof}

We recall from \cite[Proposition~4.5]{GP20} that any integrable bounded simple weight $\fsl(\infty)$-module is isomorphic to a direct limit $\varinjlim L_{\fb(<_n)}(\lambda(n))$, where, for every $n$, $\lambda(n)$ is a weight of the following types:
\begin{enumerate}
\item  $(1^{(b_n)},0^{(n-b_n)})$,
\item $(a_n,0^{(n-1)})$, 
\item  $(0^{(n-1)}, -a_n)$, 
\item $(\mu_1,\ldots,\mu_k,0^{(n-k)})$, 
\item $(0^{(n-k)}, -\mu_k,\ldots,-\mu_1)$.
\end{enumerate}
Here $B = \{b_1\leq b_2\leq \ldots\} \subseteq \Z_{>0}$ is a semi-infinite set (that is, $|B|=|\Z_{>0}\setminus B|=\infty$) satisfying $b_{n+1}\in \{b_n, b_n+1\}$, $A=\{a_1\leq a_2\leq \ldots\} \subseteq \Z_{>0}$ is an infinite set, and $\mu : = (\mu_1\geq \cdots \geq \mu_k)$ is a partition. These locally simple $\fsl(\infty)$-modules are denoted respectively by $\Lambda_B^{\frac{\infty}{2}} V$, $S_A^\infty V$, $S_A^\infty V_*$, $S^\mu V$ and $S^\mu V_*$.

Fix nonzero weight vectors:
\begin{enumerate}
\item $v_\mu\in S^\mu V$ of weight $\mu := \sum_{i=1}^k \mu_i\varepsilon_i\in \fh_\fsl^*$,
\item $v_\mu^*\in S^\mu V$ of weight $\mu^* := \sum_{i=1}^{k}-\mu_i\varepsilon_{i} \in \fh_\fsl^*$,
\item $e_A \in \Lambda^{\frac{\infty}{2}}_B V$ of weight $\varepsilon_A := \sum_{i\in A}\varepsilon_i\in \fh_\fsl^*$,
\item $v_A \in S^\infty_A V$ of weight $\lambda_A := \sum_{i\geq 1}(a_i -a_{i-1})\varepsilon_i\in \fh_\fsl^*$,
\item $v_A^* \in S^\infty_A V$ of weight $\lambda_A^* := \sum_{i\geq 1}(a_{i-1} -a_{i})\varepsilon_i\in \fh_\fsl^*$.
\end{enumerate}
Now we are ready to state the main result of this section.

\begin{theo}\label{thm:bound.s.w.gl.mod}
An integrable simple weight $\fgl(\infty)$-module $M$ is bounded if and only if $M$ is isomorphic to one of the following modules: $\Lambda_A^{\frac{\infty}{2}} V(e_A, c)$, $S_A^{\infty} V(v_A, c)$, $S_A^{\infty} V_*(v_A^*, c)$,  $S^\mu V(v_\mu, c)$, or $S^\mu V_*(v_\mu^*, c)$, where $c\in \C$ is a scalar.
\end{theo}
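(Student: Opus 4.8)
The plan is to deduce the theorem from the $\fsl(\infty)$-classification \cite[Proposition~4.5]{GP20} through the extension construction $N\mapsto N(m,c)$ together with Lemma~\ref{lem:loc.simpl=M(c)}.

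\emph{Sufficiency.} Each module in the list is $N(m,c)$, where $N$ is one of the five simple integrable bounded weight $\fsl(\infty)$-modules and $m$ is the indicated weight vector. It is a weight $\fgl(\infty)$-module by Remark~\ref{rem:weights.of.M(m,c)}, and since $N(m,c)=N$ as a vector space and the restriction $\fh_\fgl^*\to\fh_\fsl^*$ is injective on $\Supp N(m,c)$ (two weights lying in one coset of the root lattice and agreeing on $\fh_\fsl$ must coincide), its weight multiplicities equal those of $N$; hence it is bounded. Any $\fgl(\infty)$-submodule is in particular an $\fsl(\infty)$-submodule of the simple module $N$, so $N(m,c)$ is simple. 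For integrability I would use $\fgl(\infty)=\fsl(\infty)\oplus\C E_{1,1}$: given $g\in\fgl(n)$, write $g=s+z$ with $s\in\fsl(n)$ and $z=\tfrac{\tr g}{n}I_n$ central in $\fgl(n)$; on a weight vector $v$ the span $W=\langle v,sv,s^2v,\dots\rangle$ is finite-dimensional (integrability of $N$ over $\fsl(\infty)$), is $z$-stable, and $z$ acts on it by the single scalar $(\wt v)(z)$ because the roots of $\fsl(n)$ vanish on $z$. Thus $W$ is $g$-stable and finite-dimensional, and $N(m,c)$ is integrable.

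\emph{Necessity.} Let $M$ be an integrable bounded simple weight $\fgl(\infty)$-module, so $\Supp M$ lies in a single coset of the root lattice. As above, restriction to $\fh_\fsl$ is injective on $\Supp M$, whence $M|_{\fsl(\infty)}$ has the same (bounded) weight multiplicities and is an integrable bounded weight $\fsl(\infty)$-module. By the semisimplicity of this category \cite{PS11} we may write $M|_{\fsl(\infty)}=\bigoplus_i N_i$ with each $N_i$ simple, hence one of the five modules of \cite[Proposition~4.5]{GP20}; in particular $\End_{\fsl(\infty)}(N_i)=\C$ (each $N_i$ has countable dimension), and boundedness forces the isotypic multiplicities to be finite. \textbf{Main obstacle.} The crux is to show that this decomposition has a single summand, appearing with multiplicity one. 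I would run a Clifford-type argument adapted to infinity. Writing $L$ for the operator by which $E_{1,1}$ acts, one checks $[L,\varphi]\in\End_{\fsl(\infty)}(M)$ for every $\varphi\in\End_{\fsl(\infty)}(M)$ (since $\fsl(\infty)$ is an ideal, $[L,\cdot]$ is implemented by $\ad E_{1,1}$, which preserves $\fsl(\infty)$), so $\ad L$ is a derivation of the algebra $\End_{\fsl(\infty)}(M)$. A derivation carries the center into itself and therefore annihilates every central idempotent; hence $L$ commutes with the projections onto the $\fsl(\infty)$-isotypic components, each of which is then a $\fgl(\infty)$-submodule. Simplicity of $M$ yields a single type, $M|_{\fsl(\infty)}\cong N\otimes W$ with $N$ one of the five and $\dim W<\infty$. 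As $\ad L$ now restricts to an inner derivation of the finite matrix algebra $\End_{\fsl(\infty)}(M)\cong\End(W)$, one obtains $L=L_N\otimes\id_W+\id_N\otimes B$ for some $B\in\End(W)$; an eigenvector of $B$ spans a proper nonzero $\fgl(\infty)$-submodule unless $\dim W=1$. Hence $M|_{\fsl(\infty)}\cong N$ is simple. The delicate point is precisely this passage through the a priori infinite algebra $\End_{\fsl(\infty)}(M)$, where finiteness of the isotypic multiplicities is exactly what reduces the computation to inner derivations of finite matrix algebras.

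Finally, with $M|_{\fsl(\infty)}\cong N$ simple, for any weight vector $m$ one has $\bU(\fgl(n))m=\bU(\fsl(n))m$ (the central part of $\fgl(n)$ acts by a scalar), which is simple over $\fgl(n)$ for $n\gg0$ because it is simple over $\fsl(n)$ and $N$ is locally simple; moreover $M=\bigcup_n\bU(\fgl(n))m$. Thus $M$ is locally simple, and Lemma~\ref{lem:loc.simpl=M(c)} gives $M\cong M|_{\fsl(\infty)}(m,c)=N(m,c)$ for some $c\in\C$, which is one of the five modules in the statement. This completes the plan.
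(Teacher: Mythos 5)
Your proposal is correct, but your necessity argument takes a genuinely different route from the paper's. The paper fixes a single weight $\lambda$ and works with the commutant $\bU(\fgl(n))^0=C_{\bU(\fgl(n))}(\fh_\fgl(n))$: boundedness makes $M^\lambda$ finite-dimensional, Lemma~\ref{lem:A1} (a Jacobson-density argument) shows that $M^\lambda$ is already simple over $\bU(\fgl(n))^0$ for $n\gg0$, and together with integrability this forces $M_n=\bU(\fgl(n))m$ to be simple, i.e.\ $M$ is locally simple; Lemma~\ref{lem:loc.simpl=M(c)} and \cite[Theorem~5.1]{GP20} then finish. You instead restrict to the ideal $\fsl(\infty)$, invoke the semisimplicity theorem of \cite{PS11} to decompose $M|_{\fsl(\infty)}$, and run a Clifford-theory argument ($\ad E_{1,1}$ is a derivation of $\End_{\fsl(\infty)}(M)$, hence kills the central idempotents and restricts to an inner derivation of the finite matrix algebra $\End(W)$) to conclude that $M|_{\fsl(\infty)}$ is simple; local simplicity of $M$ then comes from local simplicity of the five $\fsl(\infty)$-modules of \cite{GP20} rather than from Lemma~\ref{lem:A1}. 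Your steps check out: injectivity of restriction of weights on a coset of the root lattice gives both the boundedness of $M|_{\fsl(\infty)}$ and the finiteness of the isotypic multiplicities that your matrix-algebra argument requires. The paper's route buys brevity and a template (the $\bU^0$-argument) reused verbatim in the super setting (Lemma~\ref{lem:M_n.simple}); yours buys an explicit sufficiency direction, which the paper leaves implicit, and independence from Lemma~\ref{lem:A1}, at the cost of relying on \cite{PS11} and on Schur's lemma for countable-dimensional simple modules. One cosmetic point: your integrability check in the sufficiency direction treats only weight vectors $v$; for general $m$ decompose it into finitely many weight components and take the sum of the corresponding finite-dimensional invariant subspaces.
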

\begin{proof}
Set $\bU(\fgl(n))^0 := C_{\bU(\fgl(n))}(\fh_\fgl (n))$, and fix a weight $\lambda\in \Supp M$. Since $M$ is simple and bounded, Lemma~\ref{lem:A1} from the Appendix claims that the weight space $M^\lambda$ is simple as a $\bU(\fgl(n))^0$-module for $n\gg 0$. Let $m\in M^\lambda$ and let $M_n :=\bU(\fgl(n))m$. The simplicity of $M^\lambda$ as a $\bU(\fgl(n))^0$-module and the fact that $M$ is integrable imply the simplicity of $M_n$ as a $\fgl(n)$-module. Therefore, $M\cong \varinjlim_{n\gg 0} M_n$ is locally simple. Hence, by Lemma~\ref{lem:loc.simpl=M(c)} we have an isomorphism of $\fgl(\infty)$-modules $M\cong M|_{\fsl(\infty)} (m,c)$ for some $c\in \C$, and by Remark~\ref{rem:weights.of.M(m,c)} we know that $M|_{\fsl(\infty)}$ is bounded as an $\fsl(\infty)$-module.  Now the statement follows from \cite[Theorem~5.1]{GP20}.
\end{proof}

\begin{prop}\label{prop:h.w.gl(infty)}
The following statements hold.
\begin{enumerate}
\item The modules $S^\infty_A V(v_A, c)$, $S^\infty_A V_*(v_A^*, c)$ are not highest weight modules with respect to any Borel subalgebra of $\fgl(\infty)$.
\item The module $\Lambda^{\frac{\infty}{2}}_A V(e_A, c)$ is a $\fb(\prec)$-highest weight module if and only if $A\prec (\Z_{>0}\setminus A)$. In this case, we have $\Lambda^{\frac{\infty}{2}}_A V(e_A, c)\cong L_{\fb(\prec)}(\varepsilon_A)$ where $\varepsilon_A |_{\fh_\fsl} = \sum_{i\in A}\varepsilon_i$ and $\varepsilon_A$ is extended to $\fh_\fgl$ via \eqref{eq:extention.formula}.
\item The module $S^\mu V(v_\mu, c)$ (respectively, $S^\mu \bV_*(v_\mu^*, c)$) is a $\fb(\prec)$-highest weight module if and only if $i_1\prec \cdots \prec i_k\prec j$ for all $j\in \Z_{>0}\setminus \{i_1,\ldots, i_k\}$ (respectively, $i_1\succ \cdots \succ i_k\succ j$ for all $j\in \Z_{>0}\setminus \{i_1,\ldots, i_k\}$). In this case, we have $S^\mu V(v_\mu, c)\cong L_{\fb(\prec)} (\mu)$ (respectively, $S^\mu V_*(v_\mu^*, c)\cong L_{\fb(\prec)} (\mu^*)$) where $\mu |_{\fh_\fsl} = \sum_{j>0} \mu_j\varepsilon_{i_j}$ (respectively, $\mu^* |_{\fh_\fsl} = \sum_{i>0} -\mu_j\varepsilon_{i_j}$) and $\mu$ (respectively, $\mu^*$) is extended to  $\fh_\fgl^*$ via \eqref{eq:extention.formula}.
\end{enumerate}
\end{prop}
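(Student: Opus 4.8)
The plan is to reduce all three parts to a single principle coming from the direct-limit structure. Each module $M$ in the statement is realized (via the classification recalled before Theorem~\ref{thm:bound.s.w.gl.mod}) as a simple weight $\fgl(\infty)$-module $M=\bigcup_n M_n$, where $M_n:=\bU(\fgl(n))\cdot(\text{generator})$ is a \emph{simple finite-dimensional} $\fgl(n)$-module for all large $n$. Since $\fn(\prec)=\bigcup_n\fn(\prec_n)$ with $\fn(\prec_n)=\fn(\prec)\cap\fgl(n)$, a nonzero vector $w\in M$, say $w\in M_N$, is $\fb(\prec)$-singular if and only if $\fn(\prec_n)w=0$ for all $n\geq N$; by simplicity of $M_n$ this forces $w$ to be, up to a scalar, the $\fb(\prec_n)$-highest weight vector of $M_n$ for all large $n$. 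Conversely any such singular $w$ generates $M$ by simplicity, so $M$ is then a $\fb(\prec)$-highest weight module. Thus the whole proposition becomes the question: \emph{for which $\prec$ do the finite-rank highest weight vectors of the $M_n$ cohere into a single vector of $M$?}

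For part (b) I would argue directly on $e_A=\bigwedge_{i\in A}v_i$. Since $E_{i,j}$ acts on the wedge as a derivation sending $v_j\mapsto v_i$, one computes that $E_{i,j}e_A\neq 0$ exactly when $j\in A$ and $i\notin A$; hence $\fn(\prec)e_A=0$ if and only if no pair $i\prec j$ has $i\notin A$ and $j\in A$, which is precisely $A\prec(\Z_{>0}\setminus A)$. This gives the ``if'' direction together with $\Lambda^{\frac{\infty}{2}}_A V\cong L_{\fb(\prec)}(\varepsilon_A)$, the weight being $\sum_{i\in A}\varepsilon_i$ on $\fh_\fsl$ extended to $\fh_\fgl$ by \eqref{eq:extention.formula}. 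For the converse the reduction principle shows a singular vector must restrict to the $\prec_n$-highest weight vector of the exterior power $M_n=\Lambda^{b_n}\C^n$, namely the wedge of the $\prec_n$-smallest $b_n$ basis vectors; coherence of these as $n\to\infty$ forces $A$ to consist exactly of the $\prec$-initial elements, i.e. $A\prec(\Z_{>0}\setminus A)$.

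For part (c) with $\mu=(\mu_1\geq\cdots\geq\mu_k)$ a partition, $M_n$ is the simple Schur module $S^\mu\C^n$, whose $\prec_n$-highest weight vector carries the largest parts of $\mu$ on the $\prec_n$-smallest positions, i.e. weight $\sum_{j=1}^k\mu_j\varepsilon_{i_j}$ with $i_1\prec_n\cdots\prec_n i_k$ the $\prec_n$-least indices. These vectors cohere under the Schur-functor transition maps induced by $\C^n\hookrightarrow\C^{n+1}$ precisely when the $k$ smallest positions stabilize, i.e. when $\prec$ has a genuine length-$k$ initial segment $i_1\prec\cdots\prec i_k\prec j$ for all remaining $j$; this yields the ``iff'' and the identification with $L_{\fb(\prec)}(\mu)$. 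The conatural case $S^\mu V_*$ is identical after dualizing: the minus signs in $\lambda(n)=(0^{(n-k)},-\mu_k,\ldots,-\mu_1)$ place the parts on the $\prec_n$-\emph{largest} positions, so the condition becomes a length-$k$ final segment $i_1\succ\cdots\succ i_k\succ j$, with highest weight $\mu^*$.

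The essential difference in part (a), and the step I expect to be the main obstacle, is that for $S^\infty_A V$ the finite-rank module is $M_n=S^{a_n}\C^n$ with $a_n\to\infty$. Here I must pin down the transition maps: under $\C^n\hookrightarrow\C^{n+1}$ the copy of $S^{a_n}\C^n$ inside $S^{a_{n+1}}\C^{n+1}$ is $S^{a_n}\C^n\cdot v_{n+1}^{\,a_{n+1}-a_n}$, so the transition sends $f\mapsto f\cdot v_{n+1}^{\,a_{n+1}-a_n}$. The $\prec_n$-highest weight vector of $M_n$ is $v_{i_1}^{\,a_n}$ (of weight $a_n\varepsilon_{i_1}$, when $\prec$ has least element $i_1$), but its image in $M_{n'}$ for $n'>n$ with $a_{n'}>a_n$ is a weight vector with several nonzero marks, which is therefore \emph{not} the single-mark highest weight vector of $M_{n'}$ and in particular not singular. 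Since $a_n\to\infty$ this incompatibility is unavoidable for every order $\prec$ (the case where $\prec$ has no least element being even easier), so no $\fb(\prec)$-singular vector exists and $S^\infty_A V$ is never a highest weight module; the argument for $S^\infty_A V_*$ is the same with smallest positions replaced by largest. The one point demanding care throughout is the precise form of these transition maps, since the dichotomy between the stabilizing partition weights of (b) and (c) and the escaping weights $a_n\to\infty$ of (a) rests entirely on them.
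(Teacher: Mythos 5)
Your argument is correct in substance, but it takes a genuinely different route from the paper. The paper's proof is a two-line reduction: since the locally nilpotent radical of any splitting Borel subalgebra of $\fgl(\infty)$ lies in $\fsl(\infty)$, a weight module is a $\fb(\prec)$-highest weight $\fgl(\infty)$-module if and only if it is one as an $\fsl(\infty)$-module, and the corresponding statement for the $\fsl(\infty)$-modules $S^\infty_A V$, $\Lambda^{\frac{\infty}{2}}_A V$, $S^\mu V$, $S^\mu V_*$ is then simply quoted from \cite[Proposition~5.2]{GP20}. What you do instead is essentially re-prove that cited result from scratch via the local structure: a $\fb(\prec)$-singular vector must, for all large $n$, span the one-dimensional $\fb(\prec_n)$-highest weight line of the simple finite-dimensional module $M_n$, and the question becomes whether these lines cohere to a single vector. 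Your case analysis is sound: the computation of $\fn(\prec)e_A$ in (b), the stabilization of the length-$k$ initial (resp.\ final) segment in (c), and in (a) the escape $a_n\to\infty$ of the $\fh(n)$-weights $a_n\varepsilon_{i_1}$ --- a fixed vector has a fixed $\fh$-weight whose restriction to $\fh(n)$ cannot equal $a_n\varepsilon_{i_1^{(n)}}$ for all $n$ --- each yield exactly the stated conditions. The precise form of the transition maps, which you flag as the delicate point, is in fact immaterial once the obstruction is phrased in terms of weights rather than vectors, as you do at the end of (a). The trade-off is clear: the paper's proof is shorter and leans on prior classification work, while yours is self-contained and makes visible where each condition on $\prec$ comes from; if you want the paper's economy, the one observation you never exploit is that the $\fgl(\infty)$-to-$\fsl(\infty)$ reduction lets you quote the known $\fsl(\infty)$ statement directly.
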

\begin{proof}
Let $\fb$ be an arbitrary splitting Borel subalgebra of $\fgl(\infty)$. The fact that a weight module $M$ is a $\fb$-highest weight $\fgl(\infty)$-module if and only if $M$ is a $\fb$-highest weight $\fsl(\infty)$-module, along with \cite[Proposition~5.2]{GP20}, implies the statement.
\end{proof}

\section{A general lemma}\label{sec:generalities.super}

In this section $\g$ is one of the Lie superalgebras introduced in Section~\ref{sec:prel}.

\begin{lem}\label{lem:ss.over.Lie}
Let $\fk$ be equal to $\g_0$ or $\g_{\bar 0}$. If $M$ is an integrable simple weight $\g$-module with finite-dimensional weight spaces, then there is an isomorphism of $\Z_2$-graded $\fk':=[\fk, \fk]$-modules
	\[
M|_{\fk'}\cong \bigoplus_i M(i),
	\]
where each $M(i)$ is an integrable simple weight $\fk'$-module with finite-dimensional weight spaces. Moreover, each $M(i)$ is also an integrable simple weight module with finite-dimensional weight spaces over $\fk$.
\end{lem}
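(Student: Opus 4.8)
The plan is to reduce the statement to the classical semisimplicity theory for integrable modules over finitary Lie algebras, using the $\Z$-grading $\g = \bigoplus_i \g_i$ (equivalently the $\Z_2$-grading) to control how $\fk'$ sits inside $\g$. First I would observe that $\fk = \g_0$ or $\g_{\bar 0}$ is itself a direct limit of reductive Lie algebras, so $\fk'$ is a (possibly infinite) direct sum of finitary simple Lie algebras of the form $\fsl(\infty)$, $\fo(\infty)$, $\fsp(\infty)$, together with finite-dimensional simple factors, as one reads off from Table~\ref{table1}. Since $M$ is an integrable weight $\g$-module with finite-dimensional weight spaces, its restriction $M|_{\fk'}$ is again an integrable weight $\fk'$-module with finite-dimensional weight spaces, because $\fk'\subseteq \fk\subseteq \g_{\bar 0}$ shares the splitting Cartan subalgebra $\fh$ (up to the central directions), and integrability and the weight-space dimensions only improve upon restriction.

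The key step is to produce the claimed decomposition $M|_{\fk'}\cong \bigoplus_i M(i)$ into \emph{simple} $\fk'$-modules, i.e.\ to prove that the restriction is semisimple. For finitary Lie algebras this is precisely the content of Weyl-type semisimplicity for integrable modules: I would invoke the theorem of Penkov--Serganova \cite{PS11} that the category of integrable weight modules with finite-dimensional weight spaces over $\fsl(\infty)$, $\fo(\infty)$, $\fsp(\infty)$ is semisimple, applied factor by factor to $\fk'$. One must first pass from $\fk$ to $\fk'=[\fk,\fk]$, noting that the central (toral) part of $\fk$ acts semisimply by scalars on each weight space, so any $\fk$-submodule filtration is automatically compatible with the $\fk'$-action and semisimplicity over $\fk'$ lifts back to $\fk$ after tensoring with the appropriate one-dimensional characters of the center; this is what gives the final sentence, that each $M(i)$ is also simple and integrable over the larger algebra $\fk$. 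Integrability and finite-dimensionality of weight spaces of each summand $M(i)$ are inherited from $M$.

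The main obstacle will be the case where $\fk'$ has infinitely many simple direct summands, or where the factors are infinite-rank finitary Lie algebras rather than finite-dimensional ones, so that I cannot simply quote finite-dimensional representation theory. Concretely, for $\g=\fsl(\infty|\infty)$ or $\fosp(\infty|\infty)$ the algebra $\fk'$ is a sum of two infinite-rank finitary simple Lie algebras, and I must check that the restricted module decomposes as an external tensor/direct construction over the two factors with each factor contributing an integrable module admitting only finitely many isotypic components in each weight space (guaranteed by the finite-dimensionality of $M^\lambda$). The technical heart is therefore verifying the hypotheses of the Penkov--Serganova semisimplicity theorem simultaneously for all simple factors and checking that the external decomposition is well behaved under the direct-limit structure; once this is in place, the decomposition $\bigoplus_i M(i)$ with the stated properties follows, and the final assertion about simplicity over $\fk$ is a matter of reincorporating the central characters.
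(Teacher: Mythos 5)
There is a genuine gap at the heart of your argument: you assert that $M|_{\fk'}$ is a weight $\fk'$-module with finite-dimensional weight spaces ``because \ldots the weight-space dimensions only improve upon restriction,'' and you need this to apply the Penkov--Serganova semisimplicity theorem directly to the whole restriction. This is backwards. The Cartan subalgebra $\fh\cap\fk'$ is in general strictly smaller than $\fh$, so an $(\fh\cap\fk')$-weight space of $M|_{\fk'}$ is the \emph{sum} of all $\fh$-weight spaces $M^{\lambda}$ with the same restriction $\lambda|_{\fh\cap\fk'}$; weight spaces can only grow. Two distinct weights of the simple module $M$ differ by an element of the root lattice $Q_{\g}$ of $\g$, not of $\fk'$, and $Q_{\g}$ can contain nonzero elements vanishing on $\fh\cap\fk'$. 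For instance, for $\g=\fosp_B(\infty|2k)$ and $\fk=\g_0=\fo_B(\infty)\oplus\fgl(k)$, both $\pm\delta_1$ and $\varepsilon_i+\delta_1$ are roots, so $\varepsilon_1+\cdots+\varepsilon_k\in Q_{\g}$, and this functional kills $\fh\cap\fk'$ since $[\fgl(k),\fgl(k)]=\fsl(k)$. Hence infinitely many $\fh$-weight spaces of $M$ could a priori collapse onto a single $(\fh\cap\fk')$-weight, and $M|_{\fk'}$ need not have finite-dimensional weight spaces at all (indeed, the lemma's conclusion does not claim that it does). Without that hypothesis, \cite[Theorem~3.7]{PS11} cannot be invoked for $M|_{\fk'}$ as a whole.

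The paper circumvents exactly this point: instead of restricting all of $M$ at once, it considers, for each $\mu\in\Supp M$, the $\fk$-submodule $N(\mu):=\bU(\fk)M^{\mu}$. All $\fh$-weights of $N(\mu)$ lie in $\mu+Q_{\fk'}$, and no nonzero element of the root lattice of the semisimple algebra $\fk'$ vanishes on $\fh\cap\fk'$; therefore inside $N(\mu)$ the $(\fh\cap\fk')$-weight spaces coincide with the $\fh$-weight spaces and are finite-dimensional. One then applies \cite[Theorem~3.7]{PS11} to each $N(\mu)$ separately and concludes that $M|_{\fk'}=\sum_{\mu}N(\mu)$ is semisimple because a sum of semisimple submodules is semisimple (\cite[Chapter~XVII, Lemma~2.1]{Lan02}). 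Your closing remark about recovering the $\fk$-action from the $\fk'$-action via central characters is essentially the right idea for the last assertion, but the decomposition itself requires the localization to the submodules $N(\mu)$ described above; as written, your proof does not go through.
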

\begin{proof}
Let $\mu$ be a weight of $M$, and consider the $\fk$-submodule $N(\mu) := \bU(\fk)M^\mu$ of $M|_{\fk'}$. Notice that the $(\fk'\cap \fh)$-weight spaces of $N(\mu)$ coincide with its $\fh$-weight spaces. Indeed, the reason is basically the same as in Remark~\ref{rem:weights.of.M(m,c)}: since $\lambda - \lambda'$ is an element of the root lattice of $\fk'$ for any two $\fh$-weights $\lambda$, $\lambda'$ of $N(\mu)$, we have $\lambda\neq \lambda'$ if and only if $(\lambda - \lambda') |_{\fh\cap \fk'} \neq 0$. Thus $N(\mu)^{\nu|_{\fh\cap \fk'}} = N(\mu)^\nu\subseteq M^\nu$ for any $\nu\in \Supp N(\mu)$, which implies that, as a $\fk'$-module, $N(\mu)$ has finite-dimensional weight spaces.

As $M|_{\fk'}$ is obviously integrable as a $\fk'$-module, so is $N(\mu)$. Then  we can use \cite[Theorem~3.7]{PS11} to conclude that each $N(\mu)$, and hence also $M|_{\fk'} = \sum_{\mu\in \Supp M} N(\mu)$ (by the general result \cite[Chapter XVII, Lemma~2.1]{Lan02}), can be written as a direct sum $\bigoplus_i M(i)$, where each $M(i)$ is an integrable simple weight $\fk'$-module with finite-dimensional weight spaces.  This proves the first statement. The second statement follows from the fact that the $(\fk'\cap \fh)$-weight spaces of each $M(i)$ are also $\fh$-weight spaces.
\end{proof}

\section{Classification results}\label{sec:classification}
\subsection{Type $A$}\label{sec:classification.type.A}
In this section 
	\[
\g = \fsl(\infty | m) \text{ for } m\in\Z_{\geq 1}\cup\{\infty\}.
	\]
	
Recall from Theorem~\ref{thm:bound.s.w.gl.mod} that any integrable bounded simple weight $\fgl(\infty)$-module is isomorphic to $M(m,c)$, for some integrable bounded simple weight $\fsl(\infty)$-module $M$, some fixed weight vector $m\in M$, and some scalar $c\in \C$. Moreover, by Remark~\ref{rem:weights.of.M(m,c)}, we know that $\Supp M(m,c)$ is obtained by extending $\Supp M$ via \eqref{eq:extention.formula}. In particular, if $\lambda = (\lambda_1, \lambda_2,\ldots)\in \C^\infty$ is in $\Supp M$ then its extension through \eqref{eq:extention.formula} to an element of $\fh_\fgl^*$ will be of the form $\lambda^d:= \lambda+((d-\lambda_1)^{(\infty)}) \in \C^\infty$, for some $d\in c + \Z$.

Consider now the isomorphism of Lie algebras $\fgl(\infty)\oplus \fsl(m) \to \fsl(\infty|m)_0$ such that
	\[
(A,B)\mapsto \left(\begin{array}{c|c}
    A & 0  \\
\hline
    0 & B
  \end{array}\right),\quad E_{1,1}\mapsto h_{\delta_1 - \varepsilon_1}:=\left(\begin{array}{c|c}
    E_{-1,-1} & 0  \\
\hline
    0 & E_{1,1}
  \end{array}\right),
	\]
where $A\in \fsl(\infty)$ and $B\in \fsl(m)$. This isomorphism induces the following correspondence of weights:
\begin{multline*}
\fh_{\fgl}^*\times \fh_\fsl^*\ni (\ldots, (\lambda_3- \lambda_1) +c, (\lambda_2- \lambda_1) + c, c)\times (\nu_1, \nu_2, \ldots) \\
\leftrightarrow (\ldots, (\lambda_3- \lambda_1) +c, (\lambda_2- \lambda_1) + c, c | 0, \nu_2 - \nu_1, \nu_3 - \nu_1,\ldots) := (\lambda^c | \nu)\in \fh^*.
\end{multline*}

By Lemma~\ref{lem:ss.over.Lie}, for an integrable bounded simple $\g$-module $M$ we have an isomorphism of $\g_0$-modules
	\[
M|_{\g_0}\cong \bigoplus_i M(i),
	\]
where each $M(i)$ is an integrable bounded simple weight $\g_0$-module. For the rest of this section we fix such a decomposition of $M|_{\g_0}$.

Recall that $m\in \Z_{\geq 1}\cup \{\infty\}$. In order to consider both cases $m<\infty$ and $m=\infty$, simultaneously, we define, for every $n\in \Z_{\geq 2}$, the elements
	\[ 
x_n := \begin{cases} 
	m & \text{ if } m\in \Z_{\geq 1} \\
n-1 & \text{ if } m=\infty.
  \end{cases}
	\]
In particular, we have  
	\[
\fsl(\infty|m) \cong \varinjlim (\g(n):=\fsl(n|x_n)).
	\]

Recall that  (unless otherwise stated) by homomorphisms of $\Z_{2}$-graded vector spaces we mean linear transformations that preserve parity.

\subsection*{The modules $S_\cA^\infty \bV$, $S_\cA^\infty \bV_*$, $\Lambda_\cA^\infty \bV$ and $\Lambda_\cA^\infty \bV_*$} By $\bV_{n}$ we denote the natural $\g(n)$-module, and by $\bV_{n}^*$ its dual. For $a,b\in \Z_{>0}$ with $b\leq a$, it is easy to check that there are unique (up to scalar) embeddings of $\g(n-1)$-modules $S^b\bV_{n-1}\hookrightarrow S^a\bV_n$, $\Lambda^b\bV_{n-1}\hookrightarrow \Lambda^a\bV_n$, and respectively, $\Pi S^b\bV_{n-1}\hookrightarrow \Pi S^a\bV_n$, $\Pi \Lambda^b\bV_{n-1}\hookrightarrow \Pi \Lambda^a\bV_n$. If $b<a$ and $x_{n-1}<x_n$, then we also have unique (up to scalar) embeddings of $\g(n-1)$-modules $S^b\bV_{n-1}\hookrightarrow \Pi S^a\bV_n$, $\Lambda^b\bV_{n-1}\hookrightarrow \Pi \Lambda^a\bV_n$, and respectively, $\Pi S^b\bV_{n-1}\hookrightarrow S^a\bV_n$, $\Pi \Lambda^b\bV_{n-1}\hookrightarrow \Lambda^a\bV_n$. Similar statements hold for the $\g(n)$-modules $S^a \bV_{n}^*$ and $\Lambda^a \bV_{n}^*$. Notice that the inequality $x_{n-1}<x_n$ holds whenever $m=\infty$.

Let $A = (a_1\leq a_2\leq \cdots)$ be a sequence of positive integers, and $\cA$ be a sequence of ordered pairs $(a_n,b_n)$, where $b_n\in \{0,1\}$ and $b_n = b_{n+1}$ if $a_n = a_{n+1}$. Then we define the $\g$-modules
\begin{eqnarray*}
S_\cA^\infty \bV & :=\varinjlim \Pi^{b_n} S^{a_n}\bV_n, \quad S_\cA^\infty \bV_* & :=\varinjlim \Pi^{b_n} S^{a_n}\bV_n^* \\
\Lambda_\cA^{\infty} \bV & :=\varinjlim \Pi^{b_n} \Lambda^{a_n}\bV_n, \quad \Lambda_\cA^{\infty} \bV_* & :=\varinjlim \Pi^{b_n} \Lambda^{a_n}\bV_n^*,
\end{eqnarray*}
where $\Pi^0$ is the identity functor. For $m=\infty$ this definition makes sense for any sequence $\cA$ as above, but for $m<\infty$ the $\g$-modules $\Lambda_\cA^{\infty} \bV$ and $\Lambda_\cA^{\infty} \bV_*$ are well defined only under the additional assumption that $a_{n+1}\in \{a_n, a_n+1\}$ and $b_n$ is constant for all $n\geq m+1$.

\subsection*{The modules $S^\mu \bV$ and $S^\mu \bV_*$} 
Let $\mu : = (\mu_1\geq \cdots \geq \mu_k)$ be a partition, and for every $n\geq k$ consider the weight $\lambda(n):=(\mu_1, \ldots, \mu_k, 0^{(n-k)}|0^{(x_n)})\in \fh(n)^*$. There are unique (up to scalar) embeddings of $\g(n)_0$-modules $L_{\fb(<_n)_0}(\lambda(n))\hookrightarrow L_{\fb(<_{n+1})_0}(\lambda(n+1))$ sending a $\fb(<_n)_0$-highest weight vector to a $\fb(<_{n+1})_0$-highest weight vector. Thus Proposition~\ref{prop:g_0-emb.g-emb} below implies that there are unique (up to scalar) embeddings of $\g(n)$-modules $\bL_{\fb(<_n)}(\lambda(n))\hookrightarrow \bL_{\fb(<_{n+1})}(\lambda(n+1))$ sending a $\fb(<_n)$-highest weight vector to a $\fb(<_{n+1})$-highest weight vector. Similar statements hold for the $\g(n)$-modules $\bL_{\fb(>_n)}(\lambda(n))^*$. Finally, we define the $\g$-modules
	\[
S^\mu \bV\cong \varinjlim \bL_{\fb(<_n)}(\lambda(n)), \quad S^\mu \bV_*\cong \varinjlim \bL_{\fb(>_n)}(\lambda(n))^*.
	\]

For all $n$, let $\lambda(n)\in \fh(n)^*$ be a weight of the following form:
\begin{enumerate}
\item[($\Omega_1$)] \phantomsection\label{sym.type.mod} $(a_n,0^{(n-1)}|0^{(x_n)})$,
\item [($\Omega_2$)] \phantomsection\label{sym.type.mod.dual} $(-a_n, 0^{(n-1)}|0^{(x_n)})$,
\item [($\Omega_3$)] \phantomsection\label{ext.type.mod} $(0^{(n)}|0^{(n-1)}, a_n)$,
\item [($\Omega_4$)] \phantomsection\label{ext.type.mod.dual} $(0^{(n)}|0^{(x_n-1)}, -a_n)$,
\item [($\Omega_5$)] \phantomsection\label{partition.type.mod} $(\mu_1,\ldots,\mu_k,0^{(n-k)}|0^{(x_n)})$,
\item [($\Omega_6$)] \phantomsection\label{partition.type.mod.dual} $(-\mu_1,\ldots,-\mu_k,0^{(n-k)}|0^{(x_n)})$,
\end{enumerate}
where $A=(a_1\leq a_2\leq \ldots)$ will be clear from the context, and $\mu : = (\mu_1\geq \cdots \geq \mu_k)$ is a partition.  Notice that
\begin{enumerate}
\item [($\Omega_1'$)] \phantomsection\label{sym.type.mod} $S_\cA^{\infty}\bV  = \varinjlim  \Pi^{b_n} S^{a_n}\bV_{n} \cong \varinjlim \Pi^{b_n} \bL_{\fb(<_n)}(\lambda(n))$,
\item [($\Omega_2'$)] \phantomsection\label{sym.type.mod.dual} $S_\cA^{\infty}\bV_*  =   \varinjlim  \Pi^{b_n} S^{a_n}\bV_{n}^* \cong \varinjlim \Pi^{b_n} \bL_{\fb(>_n)}(\lambda(n))$,
\item [($\Omega_3'$)] \phantomsection\label{ext.type.mod} $\Lambda_\cA^{\infty}\bV =  \varinjlim  \Pi^{b_n} \Lambda^{b_n}\bV_{n} \cong \varinjlim \Pi^{b_n}\bL_{\fb(>_n)}(\lambda(n))$,
\item [($\Omega_4'$)] \phantomsection\label{ext.type.mod.dual} $\Lambda_\cA^{\infty}\bV_* = \varinjlim  \Pi^{b_n} \Lambda^{b_n}\bV_{n}^* \cong \varinjlim \Pi^{b_n} \bL_{\fb(<_n)}(\lambda(n))$,
\item [($\Omega_5'$)] \phantomsection\label{partition.type.mod} $S^\mu \bV\cong \varinjlim (S^\mu \bV_n:= \bL_{\fb(<_n)}(\lambda(n)))$, $\Pi S^\mu \bV\cong \varinjlim (\Pi S^\mu \bV_n:= \Pi\bL_{\fb(<_n)}(\lambda(n)))$,
\item [($\Omega_6'$)] \phantomsection\label{partition.type.mod.dual} $S^\mu \bV_* \cong \varinjlim ( S^\mu \bV_n^* := \bL_{\fb(>_n)}(\lambda(n)))$, $\Pi S^\mu \bV_* \cong \varinjlim (\Pi S^\mu \bV_n^* := \Pi \bL_{\fb(>_n)}(\lambda(n)))$.
\end{enumerate}

\subsection*{Extensions}\label{sec:extensions}
For $n,m\in \Z_{>0}$, we set 
	\[
\rho(n|m) := (n,\ldots, 2,1 | -1,-2,\ldots, -m),
	\]
and, for any given weight $\lambda = (a_1,\ldots, a_n | b_1,\ldots, b_m)$ of $\fsl(m|n)$, we define the \emph{left side} (respectively, \emph{right side}) of $\lambda$ to be $(a_1,\ldots, a_n)$ (respectively, $(b_1,\ldots, b_m)$). 

Let $F$ be the set of all functions from $\Z$ to the set of symbols $\{<,>,\times, \circ\}$ such that $f(z) = \circ$ for all but finitely many $z\in \Z$. Define 
	\[
\#f := |f^{-1}(\times )|,\quad core_L(f) := f^{-1}(>),\quad core_R (f) := f^{-1}(<),
	\]
and let the \emph{core of} $f$ be
	\[
core(f) := (core_L(f), core_R(f)).
	\]
If $f\in F$, we define the \emph{weight diagram} $D_{wt}(f)$ to be the graph of the function $f$, i.e. a number line with the symbol $f(z)$ drawn at each $z\in \Z$. Also, if $\#f = k$, then we set $\times (f):= (a_1,\ldots, a_k)$, where $f^{-1}(\times) = \{a_1,\ldots, a_k\}$, and $a_1>\cdots>a_k$. If $a,b\in \Z$ satisfy $f(a)=\times$, $f(b) = \circ$ and $b<a$, we define $f_b^a\in F$ to be the map with same core as $f$, and such that
	\[
\times (f_b^a) = (a_1,\ldots,a_{j-1}, b, a_{j+1},\ldots, a_k),
	\]
where $a = a_j$ and $a_{j-1}<b<a_{j+1}$. Let $l_f(b,a)$ denote the number of occurrences of the symbol $\times$ minus the number of occurrences of the symbol $\circ$ strictly between $b$ and $a$ in $D_{wt}(f)$. We say that $g$ is obtained from $f$ by a \emph{legal move of weight zero} if $g = f_b^a$ for some $a,b\in \Z$ with $l_f(b,a)=0$.

Let $P\subseteq \Z^n\times \Z^m$ (respectively, $P^+\subseteq \Z^n\times \Z^m$) denote the set of integral (respectively, dominant integral) weights of $\fgl(m|n)$. Any $(\lambda_1,\ldots, \lambda_m | \lambda_1',\ldots, \lambda_n')\in P$ can be identified with the following $\rho(m|n)$-shifted element
	\[
(a_1:=\lambda_1+n,\ldots, a_n:=\lambda_n+1 | b_1:= 1 - \lambda_1',\ldots, b_m:= m - \lambda_m').
	\]
Via this identification, $P^+$ corresponds to the set of elements $\lambda = (a_1,\ldots, a_n | b_1,\ldots, b_m)\in P$ such that
	\[
a_1>\cdots>a_n,\quad b_1<\cdots<b_m.
	\]

For any $f\in F$, write
	\[
core_L(f) \cup \times (f) = (a_1>\cdots >a_n)\quad \text{and} \quad core_R(f) \cup \times (f) = (b_1<\cdots <b_m),
	\]
and set
	\[
\lambda_f := (a_1,\ldots, a_n | b_1,\ldots, b_m)\in P^+.
	\]
The map $F\ni f \mapsto \lambda_f\in P^+$ is a bijection between $F$ and $P^+$, and its inverse is $P^+\ni \lambda \mapsto f_\lambda\in F$.

Given $f,g\in F$, we write
	\[
f\to g,\quad g\to f
	\]
if $g$ is obtained from $f$ by a legal move of weight zero, or $f$ is obtained from $g$ by a legal move of weight zero, respectively. Let $\bL_{\fgl(n|m)}(\nu)$ denote a simple highest weight $\fgl(n|m)$-module of highest weight $\nu$ with respect to the Borel subalgebra of $\fgl(n|m)$ given by upper triangular matrices. Let $\fh(m|n)$ be the diagonal subalgebra of $\fgl(m|n)$. Then it follows from \cite[Theorem~B]{MS11} that $\Ext_{\fgl(n|m), \fh(m|n)}^1 (\bL_{\fgl(n|m)}(\lambda_f), \bL_{\fgl(n|m)}(\lambda_g))\neq 0$ if and only if $f\to g$ or $g\to f$, where the subscripts on $\Ext^1$ indicate that we consider extensions in the category of weight modules.

\begin{rem}\label{rem:reduction.to.gl(n|m)}
If $n\ne m$ then we have a direct sum of ideals $\fgl(n | m) =  \C z\oplus \fsl(n|m)$, where the identity matrix $z = I_{n+m}$ is central in $\fgl(n|m)$. Let $M$ be a simple object in the category of weight modules over $\fgl(n|m)$. Since $z$ lies in the center of $\fgl(n|m)$, we have an isomorphism of $\fgl(n|m)$-modules $M\cong \C_c\boxtimes S$, where $S = M|_{\fsl(n|m)}$ is a simple weight $\fsl(n|m)$-module and $\C_c$ is one-dimensional with $z$ acting on $\C_c$ via multiplication by $c$. Let $\C_c\boxtimes S$ and $\C_d\boxtimes T$ be two simple weight $\fgl(n|m)$-modules. Then
\begin{align*}
\Ext_{\C z\oplus \fsl(n|m)}^1(\C_c\boxtimes S, \C_d\boxtimes T)& \cong \Ext_{\C z}^1(\C_c, \C_d)\otimes \Hom_{\fsl(n|m)}(S,T) \\
& \oplus \Hom_{\C z}(\C_c, \C_d)\otimes \Ext_{\fsl(n|m)}^1(S,T),
\end{align*}
where in this remark we skip the Cartan subalgebras in the subscripts. Thus, if we assume that $S\ncong T$ and $c=d$, we obtain 
	\[
\Ext_{\C z\oplus \fsl(n|m)}^1(\C_c\boxtimes S, \C_c\boxtimes T) \cong  \Ext_{\fsl(n|m)}^1(S,T).
	\]

Let $\bL_{\fb(<_n)}(\lambda|\lambda')$ be a simple highest weight $\fsl(n|m)$-module and consider $c(\lambda) := \sum \lambda_i + \sum \lambda_i'$. In what follows we denote the $\fgl(n|m)$-module $\C_{c(\lambda)}\boxtimes \bL_{\fb(<_n)}(\lambda|\lambda')$ by $\bL_{\fgl}(\lambda|\lambda')$. Notice that for any other weight $(\nu|\nu')$ there exists $d(\lambda)\in \C$ such that $\C_{c(\lambda)}\boxtimes \bL_{\fb(<_n)}(\nu|\nu')\cong \bL_{\fgl}(\nu + d(\lambda)^{(n)}| \nu'-d(\lambda)^{(m)})$. Then
	\[
\Ext_{\fsl(n|m)}^1 (\bL_{\fb(<_n)}(\lambda|\lambda'), \bL_{\fb(<_n)}(\nu|\nu')) \cong \Ext_{\fgl(n|m)}^1(\bL_{\fgl}(\lambda|\lambda'), \bL_{\fgl}(\nu + d(\lambda)^{(n)}| \nu'-d(\lambda)^{(m)}))
	\]
\demo
\end{rem}

For the next result we need to write the $\g(n)$-modules appearing in {\rm \hyperref[sym.type.mod]{($\Omega_1'$)}-\hyperref[partition.type.mod.dual]{($\Omega_6'$)}} as $\fb(<_n)$-highest weight modules. The following isomorphisms of $\g(n)$-modules can be obtained via odd reflections (see \cite[Lemma~10.2]{Ser11}, or \cite[Lemma~3]{PS94}):
\begin{enumerate}
\item $S^{a_n}\bV_{n}^* = \bL_{\fb(>_n)}(-a_n, 0^{(n-1)}|0^{(x_n)}) \cong  \bL_{\fb(<_n)}(\lambda(n))$,
\item  $\Lambda^{a_n}\bV_{n} = \bL_{\fb(>_n)}(0^{(n)}|0^{(x_n-1)}, a_n) \cong \bL_{\fb(<_n)}(\lambda(n))$,
\item  $S^\mu \bV_n^* =  \bL_{\fb(>_n)}(-\mu_1,\ldots,-\mu_k,0^{(n-k)}|0^{(x_n)}) \cong \bL_{\fb(<_n)}(\lambda(n))$,
\end{enumerate}
where for $n > k$, the respective $\lambda(n)$ is as follows:
\begin{enumerate}
\item [(${\widetilde \Omega}_2$)] \phantomsection\label{sym.type.mod.dual.b(<)} $(0^{(n)}|0^{(x_n-a_n)}, -1^{(a_n)})$ if $a_n\leq x_n$, or $(0^{(n-1)}, -a_n + x_n| (-1)^{(x_n)})$ otherwise,
\item [(${\widetilde {\Omega}}_3$)] \phantomsection\label{ext.type.mod.b(<)}  $(1^{(a_n)},0^{(n-a_n)}|0^{(x_n)})$ if $a_n\leq n$, or $(1^{(n)}|a_n-n, 0^{(x_n-1)})$ otherwise,
\item [(${\widetilde{\Omega}}_6$)] \phantomsection\label{partition.type.mod.dual.b(<)} $(0^{(n-l)}, -\mu_l+x_n,\ldots,-\mu_1+x_n|-l^{(x_n - \mu_{l+1})},\ldots,-k^{(\mu_k)})$ if $\mu_l\geq x_n$ and $\mu_{l+1}<x_n$ for some $l$, or $(0^{(n)}|0^{(x_n-\mu_1)},-i^{(\mu_1 - \mu_{i+1})},\ldots,-k^{(\mu_k)})$ otherwise (in the latter case $i$ is such that $\mu_1=\cdots=\mu_i$ and $\mu_i>\mu_{i+1}$). In fact, both types of weights can be described by partitions: in the former case, to any pair of partitions $\nu = (\nu_1'\geq\cdots \geq \nu_{x_n}')$ and $\nu = (\nu_1\geq\cdots \geq \nu_p)$ we associate the weight $(0^{(n-p)},-\nu_p, \ldots, -\nu_1 |-\nu_{x_n}',\ldots, -\nu_1')$; in the latter case, to any partition $\nu = (\nu_1\geq\cdots \geq \nu_p)$ with $p\leq x_n$ we associate the weight $(0^{(n)} | 0^{(x_n-p)},-\nu_p,\ldots, -\nu_1)$.
\end{enumerate}

In the proof of the next result we use the symbol ``$\star$'' for a mark of a weight whose explicit form does not matter.

\begin{lem}\label{lem:not.hw.trivial.ext}
Assume that $x_n>1$ in $\g(n) = \fsl(n | x_n)$, and let $P, Q$ be simple $\g(n)$-modules occurring in {\rm \hyperref[sym.type.mod]{($\Omega_1'$)}-\hyperref[partition.type.mod.dual]{($\Omega_6'$)}}. Assume in addition that, if $P$ or $Q$ has type {\rm \hyperref[partition.type.mod]{($\Omega_5'$)}} or {\rm  \hyperref[partition.type.mod.dual]{($\Omega_6'$)}} then the length of the respective partition $\mu$ is much smaller than $n$. Then $\Ext_{\g(n), \fh(n)}^1(P,Q)=0$.
\end{lem}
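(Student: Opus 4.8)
The plan is to reduce the whole computation to the weight-diagram combinatorics of Musson--Serganova and to show that the diagrams attached to the modules in $(\Omega_1')$--$(\Omega_6')$ are never joined by a single legal move of weight zero. First I would realize each of $P$ and $Q$ as a $\fb(<_n)$-highest weight module: using the odd-reflection isomorphisms recorded just above the lemma (which rewrite the $\fb(>_n)$-highest weight modules of types $(\Omega_2)$, $(\Omega_3)$, $(\Omega_6)$ as $\fb(<_n)$-highest weight modules of weights $(\widetilde\Omega_2)$, $(\widetilde\Omega_3)$, $(\widetilde\Omega_6)$), every such module is $\bL_{\fb(<_n)}(\lambda)$ up to a parity shift $\Pi$. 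Since $\Pi$ is a parity-reversing self-equivalence of the category of weight modules, it does not affect whether $\Ext^1$ vanishes, so I may discard it. Next, since $n\neq x_n$ for the relevant $n$ (indeed $x_n=n-1$ when $m=\infty$, and $x_n=m<n$ when $m<\infty$), Remark~\ref{rem:reduction.to.gl(n|m)} lets me replace the $\fsl(n|x_n)$-modules by the associated $\fgl(n|x_n)$-modules $\bL_\fgl(\lambda_P)$, $\bL_\fgl(\lambda_Q)$ with matched central character, thereby identifying $\Ext^1_{\g(n),\fh(n)}(P,Q)$ with $\Ext^1_{\fgl(n|x_n),\fh}$ of the corresponding highest weight modules.

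I would then pass to weight diagrams. Under the bijection $F\leftrightarrow P^+$ the weights $\lambda_P,\lambda_Q$ become functions $f_P,f_Q\in F$, and by \cite[Theorem~B]{MS11} the group $\Ext^1$ is nonzero precisely when $f_P\to f_Q$ or $f_Q\to f_P$, that is, when one diagram is obtained from the other by a single legal move of weight zero. Thus the statement becomes the purely combinatorial claim that no two diagrams arising from the weights $(\Omega_1)$, $(\widetilde\Omega_2)$, $(\widetilde\Omega_3)$, $(\Omega_4)$, $(\Omega_5)$, $(\widetilde\Omega_6)$ differ by a legal move of weight zero. The basic tool is that a legal move preserves both the core $(core_L(f),core_R(f))$ and the crossing number $\#f$, since it only relocates a single $\times$ onto a free $\circ$.

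The first step of the verification is to compute $\#f$ for every family. Whenever the nonzero marks of the weight are confined to a single block --- which covers $(\Omega_1)$, $(\Omega_4)$, $(\Omega_5)$, the second case of $(\widetilde\Omega_6)$, and the small-parameter ranges of $(\widetilde\Omega_2)$, $(\widetilde\Omega_3)$ --- the $\rho(n|x_n)$-shift keeps all left-hand positions positive and all right-hand positions negative, so there are no coincidences and $\#f=0$. A diagram with $\#f=0$ admits no legal move and cannot be the target of one, so $\Ext^1$ between any two such modules vanishes at once, and $\Ext^1$ between such a module and one with $\#f\geq1$ vanishes because $\#f$ is a legal-move invariant.

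This leaves the configurations that genuinely produce crossings: $(\widetilde\Omega_2)$ and $(\widetilde\Omega_3)$ in the parameter ranges where one power spills into the opposite block, and the first case of $(\widetilde\Omega_6)$. Here the hypothesis $\ell(\mu)\ll n$ is used to guarantee that every $\times$ sits in a bounded window around the boundary of the two blocks, followed by a long uniform run of $>$'s coming from the $0^{(n-k)}$-tail, while $x_n>1$ guarantees at least two right-hand symbols near that boundary. For each pair $(P,Q)$ with equal core and equal $\#f\ge1$, I would examine the unique $\times$-relocation that could connect $f_P$ to $f_Q$ and check the balance condition $l_f(b,a)=0$: the uniform tail forbids moving a $\times$ into it (there $l_f$ is never zero), and $x_n>1$ forces any admissible relocation to produce a diagram whose core differs from, or whose crossing pattern fails to match, every diagram on the list. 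I expect this last step to be the main obstacle --- a finite but delicate case-by-case check --- and it is exactly where both hypotheses become indispensable; note that relaxing $x_n>1$ is precisely what would reintroduce the extensions responsible for the failure of semisimplicity in the $\fsl(\infty|1)$ case mentioned in the introduction.
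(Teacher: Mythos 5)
Your overall skeleton matches the paper's: reduce from $\fsl(n|x_n)$ to $\fgl(n|x_n)$ via Remark~\ref{rem:reduction.to.gl(n|m)}, rewrite everything as $\fb(<_n)$-highest weight modules using the odd reflections, and invoke \cite{MS11} so that the problem becomes showing that no two diagrams on the list are related by a single legal move of weight zero. However, your first and supposedly decisive step rests on a false computation. You claim that for the weights of types $(\Omega_1)$, $(\Omega_4)$, $(\Omega_5)$, etc.\ the $\rho(n|x_n)$-shift ``keeps all left-hand positions positive and all right-hand positions negative, so there are no coincidences and $\#f=0$.'' In the paper's normalization the shifted right-hand entries are $b_j=j-\lambda_j'$, so for $\lambda'=0^{(x_n)}$ they are $1,2,\ldots,x_n$, and these all also occur among the shifted left-hand entries $(\ldots,2,1)$ produced by the tail $0^{(n-k)}$. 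Hence $\#f$ is large --- these modules are maximally or nearly maximally atypical (e.g.\ $\#f=x_n$ for $(\Omega_1)$ and for the trivial weight) --- and your immediate dismissal of most pairs collapses. Legal moves of weight zero are in fact abundantly available for every module on the list; the actual content of the lemma, and of the paper's proof, is that the \emph{output} of any such move --- a weight whose shifted form contains a repeated value $b$ flanked by gaps $|b'-b|>1$ and $|c-b|>1$ --- can never again equal (a central-character shift of) a weight on the list. You never establish this.

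Two further gaps. First, even in the cases you acknowledge as nontrivial you only describe what you ``would'' check and concede it is ``the main obstacle''; that case-by-case verification \emph{is} the proof and cannot be deferred. Second, you do not address the fact that a single $\fsl(n|x_n)$-module corresponds to a whole family of $\fgl(n|x_n)$-weights differing by $(c^{(n)}\,|\,-c^{(x_n)})$ for $c\in\C$; the paper deals with this by comparing differences of pairs of marks, which are insensitive to this shift. Without some such device, comparing cores and diagrams of $P$ and $Q$ is not even well posed.
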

\begin{proof}
Let $\lambda$ be the $\fb(<_n)$-highest weight of a module appearing in {\rm \hyperref[sym.type.mod]{($\Omega_1'$)}-\hyperref[partition.type.mod.dual]{($\Omega_6'$)}}, and set $f:=f_{\lambda}$. We claim that if $a<b$ satisfy $f(a) = \times$, $f(b) = \circ$ and $c\in \C$, then for $n\gg 0$ the weight $\lambda_{f_b^a}+(c^{(n)}|-c^{(x_n)})$ does not occur as a $\fb(<_n)$-highest weight of a module  in {\rm \hyperref[sym.type.mod]{($\Omega_1'$)}-\hyperref[partition.type.mod.dual]{($\Omega_6'$)}}. Below we prove this claim for $\lambda$ of the form $(a_n, 0^{(n-1)} | 0^{(x_n)})$ or $(0^{(n)} | 0^{(x_n-1)}, -a_n)$ for $a_n \in \Z_{>0}$, or $(\mu_1,\ldots, \mu_k, 0^{(n-k)} | 0^{(x_n)})$ for a partition $\mu = (\mu_1\geq \cdots \geq \mu_k)$. The other cases follow by dualization.

Performing an arbitrary legal move of weight zero on $f$ yields a weight whose $\rho(n|x_n)$-shifted form is given by
	\[
\lambda_{f^a_b} = (\star, \ldots, \star,b', b | b, c, \star, \ldots, \star),
	\]
where $|b' - b|>1$ and $|c-b|>1$. Since we are assuming $x_n\geq 2$, we conclude that $\lambda_{f^a_b}$ is not equal the $\rho(n|x_n)$-shifted form of the following weights: $(b_n, 0^{(n-1)} | 0^{(x_n)})$, $(0^{(n)} | 0^{(x_n-1)}, -b_n)$, $(0^{(n)}|0^{(x_n-b_n)}, -1^{(b_n)})$, $(0^{(n-1)}, -b_n + x_n| (-1)^{(x_n)})$, $(1^{(b_n)},0^{(n-b_n)}|0^{(x_n)})$ or $(1^{(n)}|b_n-n, 0^{(x_n-1)})$ for $b_n\in \Z_{>0}$, $(\nu_1,\ldots, \nu_l, 0^{(n-l)} | 0^{(x_n)})$, $(0^{(n)}|0^{(x_n-\nu_1)},-i^{(\nu_1 - \nu_{i+1})},\ldots,-l^{(\nu_l)})$ for a partition $\nu = (\nu_1\geq \cdots \geq \nu_l)$.

To prove that the weight $\lambda_{f^a_b}$ is not equal the $\rho(n|x_n)$-shifted form of a weight $(0^{(n-l)}, -\nu_l+x_n,\ldots,-\nu_1+x_n|-l^{(x_n - \nu_{l+1})},\ldots,-k^{(\nu_l)})$ for a partition $\nu = (\nu_1\geq \cdots \geq \nu_l)$, we notice that if $\lambda$ equals $(a_n, 0^{(n-1)} | 0^{(x_n)})$ (respectively, $(\mu_1,\ldots, \mu_k, 0^{(n-k)} | 0^{(x_n)})$), the difference of the $n$-th and $(n-1)$-th (respectively, the $k$-th and $(k+1)$-th) marks in the left side of $\lambda_{f_b^a}$ is bigger than zero (here we are assuming that $n\gg 0$ so that $n-l>k$). For $\lambda=(0^{(n)} | 0^{(x_n-1)}, -a_n)$ we take $\min \{n, a_n\} \gg 0$ so that $x_n+a_n\gg l$, and: the difference of the $x_n$-th and $(x_n-1)$-th marks in the right side of $\lambda_{f_a^b}$ is bigger than $k$ (if $a = x_n+a_n$), or the difference of the $(x_n+a_n-2)$-th and $(x_n+a_n-1)$-th marks in the left side of $\lambda_{f_b^a}$ is bigger than $1$ (if $a < x_n+a_n$). This proves the claim.

Let $\nu$ be the $\fb(<_n)$-highest weight of a module occurring in {\rm \hyperref[sym.type.mod]{($\Omega_1'$)}-\hyperref[partition.type.mod.dual]{($\Omega_6'$)}}. We have shown in all cases that there exists a pair of marks of $\lambda_{f_b^a}$ whose difference does not coincide with the difference of the respective pair of marks of the $\rho(n|x_n)$-shifted form of $\nu$. Since for any $c\in \C$ the difference of any pair of marks of $\lambda_{f_b^a} + (c^{(n)}|-c^{(x_n)})$ coincides with the difference of the respective pair of marks of $\lambda_{f_b^a}$, we conclude (1): for any $c\in 
\C$ the non-shifted form of $\lambda_{f_b^a} + (c^{(n)}|-c^{(x_n)})$ cannot occur as a $\fb(<_n)$-highest weight of a module in {\rm \hyperref[sym.type.mod]{($\Omega_1'$)}-\hyperref[partition.type.mod.dual]{($\Omega_6'$)}}.

Assume now $\mu$ is one of the $\fb(<_n)$-highest weights appearing in {\rm \hyperref[sym.type.mod.dual.b(<)]{(${\widetilde \Omega}_2$)}, \hyperref[ext.type.mod.b(<)]{(${\widetilde \Omega}_3$)}, \hyperref[partition.type.mod.dual.b(<)]{(${\widetilde \Omega}_6$)}} and set $g=f_\mu$.  Similarly to (1) we show (2): if $g^a_b$ is obtained from $g$ by a legal move of weight zero, then for any $c\in 
\C$ the non-shifted form of $\lambda_{g^a_b} + (c^{(n)}|-c^{(x_n)})$ does not occur as a $\fb(<_n)$-highest weight of a module in {\rm \hyperref[sym.type.mod]{($\Omega_1'$)}-\hyperref[partition.type.mod.dual]{($\Omega_6'$)}}. Now we can combine (1) and (2) above with \cite{MS11} to obtain $\Ext_{\fgl(n|x_n)\oplus \C z}^1(\bL_{\fgl}(\nu), \bL_{\fgl}(\lambda + (c^{(n)}|-c^{(x_n)}))=0$ for every $c\in \C$ and any weight $\nu$ occurring as a $\fb(<_n)$-highest weight of a module in {\rm \hyperref[sym.type.mod]{($\Omega_1'$)}-\hyperref[partition.type.mod.dual]{($\Omega_6'$)}}. Finally, Remark~\ref{rem:reduction.to.gl(n|m)} gives
	\[
\Ext_{\fsl(n|x_n), \fh(n)}^1 (\bL_{\fb(<_n)}(\nu), \bL_{\fb(<_n)}(\lambda)) \cong \Ext_{\fgl(n|x_n), \fh(n)\oplus \C z}^1(\bL_{\fgl}(\nu), \bL_{\fgl}(\lambda + (c(\nu)^{(n)}|-c(\nu)^{(x_n)})) = 0,
	\]
and the statement follows.
\end{proof}

\subsection{Main results}

Recall the $\fsl(\infty)$-modules $\Lambda_A^{\frac{\infty}{2}}V$, $S_A^\infty V$, $S_A^\infty V_*$, $S^\mu V$ and $S^\mu V_*$ defined in Section~\ref{sec:int.bound.gl}. The support of each of these modules equals the projection to $\fh_\fsl^*$ of a respective subset of $\C^\infty$:
\begin{enumerate} \label{list:types.of.weights}
\item [(i)] \phantomsection\label{list:types.of.weights.exterior}  $\Lambda_A := \{\varepsilon_B=\sum_{i\in B} \varepsilon_i \mid B\approx A\}$, where $B \approx A$ means that there exist disjoint finite subsets $F_A\subseteq A$ and $F_B\subseteq B$, such that $|F_A|=|F_B|$ and $A\setminus F_A = B\setminus F_B$,
\item [(ii)] \phantomsection\label{list:types.of.weights.symmetric}  $S_A := \{\lambda \mid \lambda_i\geq 0, \exists n : \sum_{i=1}^n \lambda_i = a_n, \lambda_i = (a_i - a_{i-1}) \text{ for } i>n\}$, where $a_i\in A$,
\item [(iii)] \phantomsection\label{list:types.of.weights.symmetric.dual}  $S_A^* := \{\lambda \mid \lambda_i\leq 0, \exists n : \sum_{i=1}^n \lambda_i = -a_n, \lambda_i = (a_{i-1} - a_{i}) \text{ for } i>n\}$, where $a_i\in A$,
\item [(iv)] \phantomsection\label{list:types.of.weights.schur}  $S_\mu := \{\lambda \mid 0\leq \lambda_i\leq \mu_i\}$,
\item [(v)] \phantomsection\label{list:types.of.weights.schur.dual}  $S_\mu^* := \{\lambda \mid 0\leq -\lambda_i\leq \mu_i\}$.
\end{enumerate}

Let $m\in \Z_{\geq 1}\cup\{\infty\}$. In this section $M$ is assumed to be a simple integrable bounded $\fsl(\infty|m)$-module. We use the symbol ``$\diamond$'' for a weight whose explicit form does not matter.

\begin{lem}\label{lem:reduction.to.sl.weights}
Any weight of $M$ can be obtained as the projection of a vector $(\nu | \diamond)$, where $\nu$ lies in one of the subsets displayed in {\rm \hyperref[list:types.of.weights.exterior]{(i)}-\hyperref[list:types.of.weights.schur.dual]{(v)}}.
\end{lem}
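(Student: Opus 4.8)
The plan is to exploit the decomposition $M|_{\g_0}\cong\bigoplus_i M(i)$ already fixed in this subsection (an application of Lemma~\ref{lem:ss.over.Lie}), which reduces a statement about the weights of $M$ to a statement about the weights of simple integrable bounded $\g_0$-modules, and then to feed in the $\fgl(\infty)$-classification of Theorem~\ref{thm:bound.s.w.gl.mod}. The first step is to note that every weight $\lambda$ of $M$ occurs in some summand $M(i)$, and that under the weight identification fixed at the beginning of Section~\ref{sec:classification.type.A} (coming from the isomorphism $\fgl(\infty)\oplus\fsl(m)\to\g_0$ with $E_{1,1}\mapsto h_{\delta_1-\varepsilon_1}$), the left side $\nu$ of $\lambda$ is precisely the weight of the $\fgl(\infty)$-factor of $\g_0=\fgl(\infty)\oplus\fsl(m)$. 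Thus it suffices to control which $\fgl(\infty)$-weights occur in $M(i)$.

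Next, since the ideals $\fgl(\infty)$ and $\fsl(m)$ of $\g_0$ commute, I would write the simple weight module $M(i)$ as an outer tensor product $M(i)\cong P_i\boxtimes Q_i$, where $P_i$ is a simple weight $\fgl(\infty)$-module and $Q_i$ a simple weight $\fsl(m)$-module. Integrability of $M(i)$ passes to $P_i$, and so does boundedness: fixing any weight of $Q_i$ bounds the $\fh_\fgl$-multiplicities of $P_i$ by those of $M(i)$, which are uniformly bounded because $M$ is bounded and $\fh\subseteq\g_0$. Hence each $P_i$ is a simple integrable bounded weight $\fgl(\infty)$-module.

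Finally I would invoke Theorem~\ref{thm:bound.s.w.gl.mod}: each such $P_i$ is isomorphic to one of $\Lambda_A^{\frac{\infty}{2}} V(e_A, c)$, $S_A^{\infty} V(v_A, c)$, $S_A^{\infty} V_*(v_A^*, c)$, $S^\mu V(v_\mu, c)$, or $S^\mu V_*(v_\mu^*, c)$. For each of the five types, the support of $P_i$ projected to $\fh_\fsl^*$ is, by the descriptions of supports recalled at the start of this subsection, exactly one of the sets $\Lambda_A$, $S_A$, $S_A^*$, $S_\mu$, $S_\mu^*$ in (i)--(v). Therefore the projection of $\nu$ to $\fh_\fsl^*$ lies in one of (i)--(v), and $\lambda$ is the projection of $(\nu\mid\diamond)$ with the corresponding right side $\diamond$, which is what we want.

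The part requiring the most care is bookkeeping rather than conceptual difficulty: tracking the left side of a $\g$-weight through the chain of identifications (the isomorphism onto $\g_0$, the extension formula \eqref{eq:extention.formula}, and the induced correspondence $(\lambda^c\mid\nu)$) so as to confirm it is genuinely the $\fh_\fgl$-weight of $P_i$, and verifying that the supports (i)--(v), which are defined as projections to $\fh_\fsl^*$, match the $\fh_\fgl$-supports of the five modules under this projection. The outer-tensor factorization of a simple weight module over a direct sum of two commuting (infinite-dimensional) ideals is standard for weight modules with finite-dimensional weight spaces, but I would want to state it explicitly so that the reduction to Theorem~\ref{thm:bound.s.w.gl.mod} is clean.
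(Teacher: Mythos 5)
Your proposal is correct and follows essentially the same route as the paper: reduce to a summand $M(i)$ of $M|_{\g_0}$, identify its $\fgl(\infty)$-factor via Theorem~\ref{thm:bound.s.w.gl.mod} so that the left side of the weight is an extension $\nu^c$ of some $\nu$ in (i)--(v), and then normalize. The one step you flag as "bookkeeping" is in fact the entire content of the paper's argument, namely that $(\nu^c\mid\diamond)$ and $(\nu\mid\diamond+c^{(m)}-\nu_1^{(m)})$ differ by a multiple of $(1^{(\infty)}\mid -1^{(m)})$ and hence have the same projection to $\fh^*$, so you should make that shift explicit rather than leave it implicit.
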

\begin{proof}
Any weight of $M$ is a weight of some $M(i)$, and hence, as discussed in the beginning of Section~\ref{sec:classification.type.A}, it can be obtained as the projection of some vector $(\nu^c | \diamond)\in \C^\infty\times \C^m$, where $c\in \C$ and $\nu$ lies in one of the subsets displayed in \hyperref[list:types.of.weights.exterior]{(i)}-\hyperref[list:types.of.weights.schur.dual]{(v)}. Since the projection of $( \nu^c | \diamond)$ to $\fh^*$ coincides with the projection of $(\nu^c - c^{(\infty)} + \nu_1^{(\infty)} | \diamond + c^{(m)} - \nu_1^{(m)}) = (\nu | \diamond + c^{(m)} - \nu_1^{(m)})$, the statement follows.
\end{proof}

Let $v\in M(i)\subseteq M$ be a nonzero weight vector with $M(i)|_{\g_0'}$ isomorphic to $S\boxtimes T$, where $S$ (respectively, $T$) is an integrable bounded simple weight $\fsl(\infty)$-module (respectively, $\fsl(m)$-module). If $S$ is isomorphic to $\Lambda_A^{\frac{\infty}{2}}V$, $S_A^\infty V$, $S_A^\infty V_*$, $S^\mu V$, or $S^\mu V_*$, then we say that $v$ has type \hyperref[list:types.of.weights.exterior]{(i)}, \hyperref[list:types.of.weights.symmetric]{(ii)}, \hyperref[list:types.of.weights.symmetric]{(iii)}, \hyperref[list:types.of.weights.schur]{(iv)}, or \hyperref[list:types.of.weights.schur.dual]{(v)}, respectively.

\begin{lem}\label{lem:components.same.type}
Let $v\in M^{(\nu|\diamond)}$ be a nonzero weight vector with type  $(*) \in \{\rm \hyperref[list:types.of.weights.exterior]{(i)} \rm - \hyperref[list:types.of.weights.schur.dual]{(v)}\}$. If $w\in M$ is a nonzero weight vector, then $w$ also has type $(*)$.
\end{lem}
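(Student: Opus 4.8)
The plan is to exploit the compatible $\Z$-grading $\g = \g_{-1}\oplus\g_0\oplus\g_1$ of $\fsl(\infty|m)$ together with the simplicity of $M$. Since $M$ is simple we have $M=\bU(\g)v$, and because $\bU(\g)=\bU(\g_{-1})\bU(\g_0)\bU(\g_{1})$ while $\g_0$ preserves each constituent $M(i)$ of $M|_{\g_0}$, every $M(j)$ is reached from the $M(i_0)$ containing $v$ by finitely many applications of $\g_1$ and $\g_{-1}$. It therefore suffices to show that whenever $\g_1\cdot M(i)$ (or $\g_{-1}\cdot M(i)$) has nonzero component in some $M(j)$, the module $M(j)$ has the same type as $M(i)$; the assertion then follows by connectivity. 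Note that, as $M$ is bounded, so is $M|_{\g_0}$; hence each $M(i)$ and each of its $\fsl(\infty)$-factors $S$ (where $M(i)|_{\g_0'}\cong S\boxtimes T$) is bounded, and thus $S$ is one of the five modules of Theorem~\ref{thm:bound.s.w.gl.mod}.

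First I would record the structure of $\g_{\pm1}$ as $\g_0$-modules: $\g_1\cong V\boxtimes \bW^*$ and $\g_{-1}\cong V_*\boxtimes \bW$, where $V$ and $V_*$ are the natural and conatural $\fsl(\infty)$-modules and $\bW$ is the natural $\fsl(m)$-module. The action map $\g_1\otimes M(i)\to M$ is a morphism of $\g_0$-modules, so $\g_1\cdot M(i)$ is a quotient of $\g_1\otimes M(i)\cong (V\otimes S)\boxtimes(\bW^*\otimes T)$; consequently every $\fsl(\infty)$-factor occurring in $\g_1\cdot M(i)$ is a constituent of $V\otimes S$, and likewise every $\fsl(\infty)$-factor of $\g_{-1}\cdot M(i)$ is a constituent of $V_*\otimes S$. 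Since the $\fsl(m)$-side is irrelevant to the type, the reduction of the first paragraph is complete once I show that every \emph{bounded} $\fsl(\infty)$-constituent of $V\otimes S$ and of $V_*\otimes S$ has the same type as $S$.

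The heart of the proof, and the step I expect to be the main obstacle, is exactly this tensor-product analysis. Using the Pieri-type decompositions for $\fsl(\infty)$, tensoring with $V$ adds a box and tensoring with $V_*$ either removes a box or produces a genuinely mixed tensor module. For $S=S^\mu V$ (type \hyperref[list:types.of.weights.schur]{(iv)}) the added- and removed-box constituents are again of the form $S^{\mu'}V$, hence of type \hyperref[list:types.of.weights.schur]{(iv)}, whereas the mixed constituents have unbounded weight multiplicities and so cannot occur as factors of the bounded module $M$; the dual family $S^\mu V_*$ (type \hyperref[list:types.of.weights.schur.dual]{(v)}) is treated symmetrically. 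For $S=S_A^\infty V$ and $S_A^\infty V_*$ (types \hyperref[list:types.of.weights.symmetric]{(ii)}, \hyperref[list:types.of.weights.symmetric.dual]{(iii)}) the bounded constituents stay within the infinite-symmetric families by the same mechanism, and an infinite symmetric power can never collapse to a finite partition, so types \hyperref[list:types.of.weights.symmetric]{(ii)}/\hyperref[list:types.of.weights.symmetric.dual]{(iii)} never mix with \hyperref[list:types.of.weights.schur]{(iv)}/\hyperref[list:types.of.weights.schur.dual]{(v)}; finally, for $S=\Lambda_A^{\frac{\infty}{2}}V$ (type \hyperref[list:types.of.weights.exterior]{(i)}) the fermionic Pieri rule keeps the bounded constituents semi-infinite wedges, and the $\{0,1\}$-valued shape of the weights of type \hyperref[list:types.of.weights.exterior]{(i)} can be neither produced from, nor produce, any of the other families. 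In every case the only bounded constituents of $V\otimes S$ and $V_*\otimes S$ share the type of $S$. Combining this with the reduction above, all constituents $M(i)$ share the type of $M(i_0)$, whence every nonzero weight vector of $M$ has type $(*)$, as claimed.
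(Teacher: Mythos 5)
Your overall strategy is sound and genuinely different from the paper's. The paper does not decompose $\g_{\pm 1}$ as a $\g_0$-module at all: instead, for a single odd root vector $X_\alpha$ it chooses $n\gg 0$ so that the root vectors $X_{\pm(\delta_i-\delta_j)}$ with $i,j\geq n$ commute with $X_\alpha$, lets $\fs\cong\fsl(\infty)$ be the subalgebra they generate, and observes that $\bU(\fs)w=X_\alpha\bU(\fs)v$ is then an isomorphic image of $\bU(\fs)v$ as an $\fs$-module. Since the type of each of the five families is already visible in the ``tail'' restriction to $\fs$ (e.g.\ $S_A^\infty V|_{\fs}$ generates an $S_B^\infty V$ with $B$ the tail of $A$), the type is preserved by each application of $\g_{\bar 1}$, and simplicity of $M$ does the rest. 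This completely sidesteps any tensor-product decomposition. Your route, by contrast, buys a cleaner conceptual picture (type propagation is controlled by the $\g_0$-module maps $\g_{\pm1}\otimes M(i)\to M$) at the price of having to understand $V\otimes S$ and $V_*\otimes S$ for all five families $S$.

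That price is where your write-up has a real gap: the ``heart of the proof'' is asserted rather than proved, and the assertions are not routine in the infinite-rank setting. First, the direct limits involved need not respect the Pieri decomposition: the connecting maps $V_n\otimes S^{a_n}V_n\hookrightarrow V_{n+1}\otimes S^{a_{n+1}}V_{n+1}$ (and similarly for $\Lambda^{a_n}V_n$) do not obviously send the summand $S^{a_n+1}V_n$ into $S^{a_{n+1}+1}V_{n+1}$ rather than partly into $S^{(a_{n+1},1)}V_{n+1}$, so you cannot read off the constituents of $V\otimes S_A^\infty V$ from the finite-rank Pieri rule without an argument. Second, $V_*\otimes S$ is not semisimple over $\fsl(\infty)$ even for $S=V$, so you must control \emph{all} bounded simple quotients, not just the summands of a naive decomposition. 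Third, each unboundedness claim (mixed tensor modules, limits of hooks $S^{(a_n,1)}V_n$, etc.) is a separate Kostka/Littlewood--Richardson computation. These holes are fillable --- the cheapest repair is to bypass Pieri entirely: any bounded simple quotient $S(j)$ of $V\otimes S$ or $V_*\otimes S$ is itself one of the five types by [GP20, Theorem~5.1], and its support is contained in $\Supp V+\Supp S$ (resp.\ $\Supp V_*+\Supp S$), which, after accounting for the constant-vector ambiguity in identifying $\fsl(\infty)$-weights with vectors in $\C^\infty$, is disjoint from the support families of the other four types. If you add that support argument (or prove the limit Pieri statements), your proof closes; as written, the central claim is still open.
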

\begin{proof}
Since $M$ is simple, it is enough to prove that the action of $\g_{\bar 1}$ on $v$ does not change the type of $v$. Assume that $v\in M(i)\cong S\boxtimes T$, where $S$ (respectively, $T$) is an integrable bounded simple weight $\fsl(\infty)$-module (respectively, $\fsl(m)$-module). Let $w := X_{\alpha}v$, where $X_{\alpha} \in \g_\alpha\subseteq \g_{\bar 1}$. Take $n\gg 0$ so that the root vectors $X_{\pm (\delta_i - \delta_j)}$ commute with $X_\alpha$ for all $i,j\geq n$. Let $\fs$ denote the Lie subalgebra of $\g_0$ generated by all such root vectors. Notice that $\fs \cong \fsl(\infty)$, and $\bU(\fs)w = X_\alpha \bU(\fs)v$. Thus we have an isomorphism of $\fs$-modules $\bU(\fs)w\cong \bU(\fs)v$, and using the fact that $S$ is isomorphic to one of the modules listed in the beginning of this section, we easily check that the type of $\bU(\fs)w$ coincides with the type of $S$. Precisely, if $S$ is isomorphic to $S_A^\infty V$ or $S_A^\infty V_*$ for an infinite set $A\subseteq \Z_{>0}$, to $ \Lambda_A^{\frac{\infty}{2}} V$ for a semi-infinite set $A\subseteq \Z_{>0}$, or to $S^\mu V$, $S^\mu V_*$ for a partition $\mu = (\mu_1\geq \cdots \geq \mu_k)$, then $\bU(\fs)w$ is isomorphic respectively to $S_B^\infty V$, $S_B^\infty V_*$, $\Lambda_B^{\frac{\infty}{2}} V$, $S^\eta V$ or $S^\eta V_*$, where $B = \{b_1\leq b_2\leq \cdots\}\subseteq \Z_{\geq n}$ satisfies $b_i = a_{n+i}$ for all $i\geq n$, and $\eta = (\eta_1\geq \cdots \geq \eta_l)$ is the partition determined by the weight $\mu|_{\fh\cap \fs}\in (\fh\cap \fs)^*$. Therefore, the assumption that $v$ and $w$ have different types would contradict to the fact that both $\fs \cong \fsl(\infty)$-modules $\bU(\fs)v$ and $\bU(\fs)w$ have the type of $S$.
\details{ 
Let's consider now the case where the components $S$ of all $M(i)$'s is isomorphic to $S^\mu V$ or $S^\mu V_*$ for a partition $\mu = (\mu_1\geq \cdots \geq \mu_k)$. Notice that we may assume that $\mu$ is nonempty for some $i$. Indeed, if $M(i)\cong S^\emptyset V$ for all $i$, then $\g_{\bar 1}M(i)=0$, and hence $\g M(i)=0$.  This implies $\g M=0$. Then we may assume that $S\cong S^\mu V$ (respectively, $S^\mu V_*$) for a nonempty partition $\mu = (\mu_1\geq \cdots \geq \mu_k)$. Consider the Borel subalgebra $\fb(>)$ of $\g$. Without loss of generality we can assume that $v$ is $(\fb(>)_{\bar 0}\cap \fsl(\infty))$-singular, and hence that its weight is of the form $(0^{(\infty)}, \mu_k,\cdots, \mu_1 | \diamond)$. It is clear that all elements $X_{\delta_i-\varepsilon_j}$ preserve the type of $v$. So it remains to show that $\fb(>)_{\bar 1}$ preserve the type of $v$. For this, observe first that $X_{\varepsilon_l-\delta_j}v=0$ for all $1\leq l\leq m$ and $j>k$ (by support arguments). Now, let $u_k\in \bU(\fb(>)_{\bar 1})$ be the maximal monomial in the variables $X_{\varepsilon_l-\delta_k}$ ($k$ is fixed) such that $u_kv\neq 0$ (the set of nonzero vectors of the form $X_{\varepsilon_{i_r}-\delta_k}\cdots X_{\varepsilon_{i_1}-\delta_k}v$ must be finite: indeed, all these vectors are $(\fb(>)\cap\fsl(\infty))$-singular and the existence of infinitely many such vectors would imply the existence of a $(\fb(>)\cap\fsl(\infty))$-singular vector of weight $(0^{(\infty)}, -1,\mu_{k-1},\cdots, \mu_1 | \diamond)$, which contradicts the integrability of $M$ with respect to the $\fsl(2)$-triple $\{X_{\delta_{k+1}-\delta_{k}}, X_{\delta_{k}-\delta_{k+1}}, H_{\delta_{k}-\delta_{k+1}}:=[X_{\delta_{k+1}-\delta_{k}}, X_{\delta_{k}-\delta_{k+1}}]\}$). Since $u_kv$ is $(\fb(>)_{\bar 0}\cap \fsl(\infty))$-singular, its weight has to be of the form $(0^{(\infty)}, \mu_k',\mu_{k-1},\cdots, \mu_1 | \diamond)$ with $\mu_k'\geq 0$. Also, by the construction of $u_k$, we have $X_{\varepsilon_l-\delta_j}u_kv=0$ for all $1\leq l\leq m$ and $j>k-1$. Now, repeating this argument inductively (replacing $v$ by $u_kv$ and $k$ by $k-1$) we construct a vector $w=u_1\cdots u_{k-1}u_kv$ of weight $(0^{(\infty)}, \mu_k',\cdots, \mu_1' | \diamond)$ with $0\leq \mu_k'\leq \ldots \leq \mu_1'$ which is $\fb(>)_{\bar 1}$-singular. Now, to understand the action $\g_{\bar 1}$ on $w$ we just have to look at how the vectors $X_{\delta_i-\varepsilon_j}$ act on it. But these clearly do not change type $w$. Since $M$ is generated by $w$ the result follows, since all weight vectors will have the same type as $w$, which in its turn has the same type as $v$.

The proof for $S\cong S^\mu V_*$ for a nonempty partition $\mu = (\mu_1\geq \cdots \geq \mu_k)$ is similar.
}
\end{proof}

If $v, w\in M$ are nonzero weight vectors then Lemma~\ref{lem:components.same.type} allows us to claim that $v$ and $w$ have the same type according to \hyperref[list:types.of.weights.exterior]{(i)}-\hyperref[list:types.of.weights.schur.dual]{(v)}. Moreover, it follows from Lemma~\ref{lem:reduction.to.sl.weights} that if $v$ has type $(*) \in {\rm \{\hyperref[list:types.of.weights.exterior]{(i)}-\hyperref[list:types.of.weights.schur.dual]{(v)}\}}$ then its weight can be represented by the vector $(\nu, \diamond)$, with $\nu$ lying in a set of type $(*)$. In what follows we often use this fact.

\begin{lem}\label{lem:possible.h.w}
Let $v\in M^{(\diamond | \diamond)}$ be a nonzero weight vector, and consider the finite-dimensional $\g(n)$-module $M_n := \bU(\g(n)) v$. Let $P$ be a simple subquotient of $M_n$ and let $(\lambda | \gamma)\in \Supp P$. Then the following statements hold for $n\gg 0$:
\begin{enumerate}
\item If $v$ is of type {\rm \hyperref[list:types.of.weights.schur.dual]{(iv)}}, then any $\fb(<_n)$-singular weight $(\lambda | \gamma)$ of $P$ is of the form \linebreak[4] $(\mu_1,\ldots,\mu_k,0^{(n-k)}|0^{(x_n)})$ for a partition $\mu = (\mu_1\geq \cdots \geq \mu_k)$, or $(0^{(\infty)}| 0^{(x_n)})$, or $(1^{(n)}|a, 0^{(x_n-1)})$ where $a\in \Z_{\geq 0}$ if $x_n>1$ and $a\in \C$ if $x_n=1$.
\item If $v$ is of type {\rm \hyperref[list:types.of.weights.schur]{(v)}}, then any $\fb(>_n)$-singular weight  $(\lambda | \gamma)$ of $P$ is of the form \linebreak[4] $(-\mu_1,\ldots,-\mu_k,0^{(n-k)}|0^{(x_n)})$ for a partition $\mu = (\mu_1\geq \cdots \geq \mu_k)$, or $(0^{(\infty)}| 0^{(x_n)})$, or $(-1^{(n)}|-a, 0^{(x_n-1)})$  where $a\in \Z_{\geq 0}$ if $x_n>1$ and $a\in \C$ if $x_n=1$.
\item  If $v$ is of type {\rm \hyperref[list:types.of.weights.symmetric]{(ii)}}, then any $\fb(<_n)$-singular weight $(\lambda | \gamma)$ of $P$ is of the form \linebreak[4] $(a,0^{(n-1)} | 0^{(x_n)})$ for some $a\in \Z_{>0}$, or $(0^{(\infty)}| 0^{(x_n)})$.
\item If $v$ is of type {\rm \hyperref[list:types.of.weights.symmetric]{(iii)}}, then any $\fb(>_n)$-singular weight $(\lambda | \gamma)$ of $P$ is of the form \linebreak[4] $(-a,0^{(n-1)} | 0^{(x_n)})$ for some $a\in \Z_{>0}$, or $(0^{(\infty)}| 0^{(x_n)})$.
\item  If $v$ is of type {\rm \hyperref[list:types.of.weights.exterior]{(i)}}, then any $\fb(<_n)$-singular weight $(\lambda | \gamma)$ of $P$ is of the form \linebreak[4] $(1^{(a)}, 0^{(n-a)}|0^{(x_n)})$, or $(1^{(n)}|a, 0^{(x_n-1)})$ for some $a\in \Z_{\geq 0}$, or $(0^{(\infty)}| 0^{(x_n)})$.
\end{enumerate}
Moreover, in all above cases $(\lambda | \gamma) = (0^{(\infty)}| 0^{(x_n)})$ implies $\g(n)P=0$.
\end{lem}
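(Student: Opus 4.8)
The plan is to reduce the statement to the finite-dimensional representation theory of $\g(n)=\fsl(n|x_n)$, to identify the simple subquotients of $M_n$ with the ``building block'' modules introduced above, and then to read off their $\fb(<_n)$- (resp.\ $\fb(>_n)$-) highest weights. First I would check that $M_n=\bU(\g(n))v$ is finite-dimensional: integrability of $M$ makes the reductive even subalgebra $\g(n)_{\bar 0}$ act locally finitely, so $\bU(\g(n)_{\bar 0})v$ is finite-dimensional, and since $\g(n)_{\bar 1}$ is finite-dimensional the vector-space factorization $\bU(\g(n))\cong\bU(\g(n)_{\bar 0})\otimes\Lambda(\g(n)_{\bar 1})$ shows $M_n$ is finite-dimensional. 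Hence each simple subquotient $P$ is a finite-dimensional simple $\fsl(n|x_n)$-module, so a $\fb(<_n)$-singular (resp.\ $\fb(>_n)$-singular) weight $(\lambda|\gamma)$ of $P$ is its highest weight and is in particular dominant; restricting to the even part, $\lambda$ is dominant for $\fgl(n)$ and $\gamma$ is dominant for $\fgl(x_n)$.

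Next I would bring in the support. Every weight of $P$ is a weight of $M$, so by Lemma~\ref{lem:reduction.to.sl.weights} the left side of $(\lambda|\gamma)$ lies, up to the central shift $\C(1^{(n)}|-1^{(x_n)})$, in the support set attached by Lemma~\ref{lem:components.same.type} to the common type of $v$. Intersecting this with the even dominance condition already constrains the left side to a partition contained in $\mu$ (types (iv), (v)), to a single-part weight $(a,0^{(n-1)})$ (types (ii), (iii)), or to a $\{0,1\}$-vector (type (i)).

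The step I expect to be the main obstacle is the precise identification of $P$, and in particular the control of the right side $\gamma$, which the support does not see. Taking type (iv) as representative, I would use that the highest weight vector of $P$ is annihilated by the odd positive root vectors $X_{\delta_r-\varepsilon_j}$, so that the odd reflections relating $\fb(<_n)$ to the even Borel identify $(\lambda|\gamma)$ with one of the $\fb(<_n)$-highest weights recorded in $\widetilde\Omega_2$--$\widetilde\Omega_6$. Boundedness of $M$ is then decisive: combined with the already determined left side it rules out the remaining symmetric shapes and any right side with more than one nonzero entry, forcing $P$ to be one of the building blocks $S^\mu\bV_n$, a high exterior power $\Lambda^{a}\bV_n$, or the trivial module. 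Reading off the corresponding highest weights yields exactly $(\mu_1,\dots,\mu_k,0^{(n-k)}|0^{(x_n)})$, $(1^{(n)}|a,0^{(x_n-1)})$ and $(0^{(\infty)}|0^{(x_n)})$. The dichotomy of the statement appears here naturally: for $x_n>1$ the $\fgl(x_n)$-dominance and integrality of the even action force $a\in\Z_{\geq 0}$, whereas for $x_n=1$ the factor $\fsl(x_n)$ is trivial and imposes no condition, so $a\in\C$ is arbitrary. The symmetric-power types (ii), (iii) and the exterior type (i) are handled by the same method with the appropriate building blocks (the symmetric powers $S^{a}\bV_n$, resp.\ the exterior powers $\Lambda^{a}\bV_n$), giving the displayed forms; parts (2) and (4) follow from (1) and (3) by the duality $\bV\leftrightarrow\bV_*$, which interchanges $\fb(<_n)$ with $\fb(>_n)$ and negates weights.

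Finally, for the last assertion, if $(\lambda|\gamma)=(0^{(\infty)}|0^{(x_n)})$ then $P$ is the simple $\fsl(n|x_n)$-module of highest weight $0$, i.e.\ the one-dimensional trivial module, whence $\g(n)P=0$.
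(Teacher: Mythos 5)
Your overall framework (finite-dimensionality of $M_n$, reduction to singular weights of the simple subquotients, the support constraint on the left side via Lemmas~\ref{lem:reduction.to.sl.weights} and \ref{lem:components.same.type}, duality for parts (b) and (d), and the trivial module for the final claim) agrees with the paper's, but the decisive step --- pinning down the right side $\gamma$ --- is missing. You assert that odd reflections ``identify $(\lambda|\gamma)$ with one of the weights in (${\widetilde\Omega}_2$)--(${\widetilde\Omega}_6$)'' and that boundedness then excludes the unwanted shapes, but neither claim is justified: the lists (${\widetilde\Omega}_2$)--(${\widetilde\Omega}_6$) record the highest weights of specific modules, and an arbitrary finite-dimensional simple $\fsl(n|x_n)$-module whose highest weight has the prescribed left side need not occur on them; moreover boundedness of $M$ by itself does not exclude a subquotient such as $\bL_{\fb(<_n)}(\mu_1,\ldots,\mu_k,0^{(n-k)}\,|\,1,0^{(x_n-1)})$, which is a perfectly bounded finite-dimensional module. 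What actually kills such subquotients in the paper is an $\fsl(1|1)$-computation driven by Lemma~\ref{lem:components.same.type}: for a $\fb(<_n)$-singular vector $w$ of weight $(\lambda|\gamma)$ with $\lambda=(\mu_1,\ldots,\mu_k,0^{(n-k)})$, $k<n$, one has $X_{\delta_1-\varepsilon_\ell}w=0$ by singularity, while also $X_{\varepsilon_\ell-\delta_1}w=0$ because otherwise its weight would have left side $(\mu_1,\ldots,\mu_k,0^{(n-k-1)},-1)$, which cannot have the type of $v$; hence $\gamma_\ell w = h_{\varepsilon_\ell-\delta_1}w=0$ for every $\ell$, i.e.\ $\gamma=0$. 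Without this argument (or an equivalent) the conclusions $\gamma=0$, resp.\ $\gamma=(a,0^{(x_n-1)})$, do not follow.

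A second, related inaccuracy: your explanation of the dichotomy $a\in\Z_{\geq0}$ for $x_n>1$ versus $a\in\C$ for $x_n=1$ by ``$\fgl(x_n)$-dominance and integrality of the even action'' is not correct, since integrability of the even part only constrains the differences $\gamma_i-\gamma_{i+1}$ and says nothing about $\gamma_1$ itself. In the paper the integrality of $\gamma_1$ and the bound $\gamma_1\geq -1$ come once more from the odd-root-vector/type argument (applied to $X_{\varepsilon_2-\delta_1}X_{\varepsilon_1-\delta_1}w$), after which the residual possibility $\gamma=(-1^{(x_n-1)},-a)$ is discarded because it yields $\Lambda^{a-1}\bV_n^*$, whose vectors do not have the type of $v$. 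Finally, note that a simple finite-dimensional $\fsl(n|x_n)$-module may admit $\fb(<_n)$-singular vectors other than its highest weight vector, so your reduction of ``any singular weight'' to ``the highest weight'' also needs justification; the paper avoids this by working directly with an arbitrary $\fb(<_n)_0$-singular vector of $M_n$.
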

\begin{proof}
Write $(\lambda|\gamma) = (\lambda_n, \ldots, \lambda_1 | \gamma_1,\ldots, \gamma_{x_n})$ and let $w\in P$ be a nonzero vector of weight $(\lambda | \gamma)$. Since $M_n$ is a finite-dimensional (and hence a semisimple) weight module of $\g(n)_0$ we may assume that $P$ is a $\g(n)_0$-submodule of $M_n$, and therefore that $w$ is a $\fb(<_n)_0$-singular vector of $M_n$.

(a).\details{The idea here is that we start with $v$ which has weight of the form $(0^{(\infty)},\nu_1,\ldots, \nu_l,0^{(n-l)}|\diamond)$ for $n\gg0$, and then we start acting on it with $\fb(<_n)$. So, in this case (a), when we write a weight of the form $(\lambda_n, \ldots, \lambda_1 | \diamond)$ we actually mean $(0^{(\infty)},\lambda_n, \ldots, \lambda_1 | \diamond)$. Same in (b). For the other cases we mean $(\diamond^{(\infty)},\lambda_n, \ldots, \lambda_1 | \diamond)$, where $\diamond^{(\infty)}$ stands by the continuation (not detectable by $\fh_n$) of the weight of $v$} Since $(\lambda|\gamma)$ is a $\fb(<_n)_0$-singular weight and $w$ has the same type of $v$ by Lemma~\ref{lem:components.same.type}, we must have $\lambda = (\mu_1,\ldots,\mu_k,0^{(n-k)})$ for some partition $\mu = (\mu_1\geq \cdots \geq \mu_k)$ where $k=1,\ldots, n$. Assuming $k<n$, we will show that $\gamma = 0$. For any $1\leq \ell\leq x_n$ we have $X_{\varepsilon_\ell - \delta_1}w=0$ as otherwise $X_{\varepsilon_\ell - \delta_1}w$ would be a vector of weight $(\mu_1,\ldots,\mu_k,0^{(n-k-1)},-1|\diamond)$ in contradiction to Lemma~\ref{lem:components.same.type}. Indeed, a weight vector with such a weight cannot have the type of $v$. Thus $X_{\varepsilon_\ell - \delta_1}w=0$. Since $w$ is a $\fb(<_n)$-singular weight vector, we conclude that $h_{\varepsilon_\ell - \delta_1}w=\gamma_\ell w = 0$, which shows $\gamma_\ell=0$. Since $\ell$ was arbitrary, this proves that $\gamma=0$.

If $k=n$ for all $n\gg 0$, then we must have $\lambda_i=1$ for all $i=1,\ldots, n$ as otherwise, by \cite[Theorem~5.1]{GP20}, $M$ would not be a bounded $\g_0$-module. Thus $(\lambda | \gamma) = (1^{(n)}|\gamma)$, and the statement is proved for $x_n=1$. Assume now that $x_n>1$. We claim that $\gamma=(a, 0^{(x_n-1)})$ or $\gamma=(-1^{(x_n-1)}, -a)$ for some $a\in \Z_{\geq 0}$. Indeed, if $\gamma_1\notin \Z$ or $\gamma_1\in \Z_{\leq -2}$ then, as in the previous case, we get a contradiction due to Lemma~\ref{lem:components.same.type}, since $X_{\varepsilon_2 - \delta_1} X_{\varepsilon_1 - \delta_1}w$ would be a nonzero vector of weight $(1^{(n-1)}, -1|\diamond)$. If $\gamma_1 \in \Z_{\geq 0}$ then $\gamma_i = 0$ for all $i\geq 2$ by the same reason. Finally, if $\gamma_1 = -1$ we can use again  Lemma~\ref{lem:components.same.type} to show that $\gamma_i = -1$ for all $2\leq i\leq x_n-1$ and that $\gamma_{x_n}=-a$ for some $a\in \Z_{\geq 0}$. The claim is proved. 

Notice that there are isomorphisms of $\g(n)$-modules
\begin{align*}
& \bL_{\fb(<_n)}(1^{(n)}|a, 0^{(x_n-1)}) \cong \Lambda^{n+a}\bV_n, \\
& \bL_{\fb(<_n)}(1^{(n)} | -1^{(x_n-1)}, -a) = \bL_{\fb(<_n)}(0^{(n)} | 0^{(x_n-1)}, -a+1)\cong \Lambda^{a-1}\bV_n^*,
\end{align*}
and, by Lemma~\ref{lem:components.same.type}, the latter module cannot occur as a $\g(n)$-subfactor of $M$ since vectors of $\Lambda^{a-1}\bV_n^*$ cannot have the type of $v$. 

To prove that $(\lambda | \gamma) = (0^{(\infty)}| 0^{(x_n)})$ implies $\g(n)P=0$ in case (a), notice that $(\g(n)_0\oplus \g(n)_{ 1}) w=0$ since  $w$ is a $\fb(<_n)$-singular  vector of weight $(0^{(\infty)}| 0^{(x_n)})$. Furthermore, $\g(n)_{-1}w\neq 0$ contradicts Lemma~\ref{lem:components.same.type}. Therefore $\g(n)w=0$ for any $\fb(<_n)$-singular vector of $P$, and consequently $\g(n)P=0$.

The remaining claims are proven in a similar way.
\end{proof}

\begin{rem}\label{rem:not.hw.trivial.ext.n=1}
For $\g = \fsl(n | 1)$, we have a weaker version of Lemma~\ref{lem:not.hw.trivial.ext}: if $P$, $Q$ are finite-dimensional simple $\g(n)$-modules whose respective $\fb(<_n)$-highest weights $\lambda$, $\mu$ are as in Lemma~\ref{lem:possible.h.w} (a) (respectively, (b)-(e)), then $\Ext_{\g(n), \fh(n)}^1 (P, Q)=0$. To prove this, we proceed as in Lemma~\ref{lem:not.hw.trivial.ext}: we show that $f_\lambda$ cannot be obtained from $f_\mu$ by a legal move of weight zero and vice-versa, and then we apply \cite{MS11}. \demo
\end{rem}

\begin{cor}\label{cor:M_n.semisimple}
Let $v\in M$ be a nonzero weight vector, and consider the finite-dimensional $\g(n)$-module $M_n = \bU(\g(n)) v$. If $P, Q$ are simple subquotients of $M_n$, then $\Ext_{\g(n), \fh(n)}^1 (P,Q)=0$. In particular, $M_n$ is a semisimple $\g(n)$-module.
\end{cor}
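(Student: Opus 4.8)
The plan is to constrain the highest weights of the simple subquotients of $M_n$ so tightly that these subquotients are exactly the modules appearing in {\rm \hyperref[sym.type.mod]{($\Omega_1'$)}-\hyperref[partition.type.mod.dual]{($\Omega_6'$)}}, then to read off the $\Ext$-vanishing from Lemma~\ref{lem:not.hw.trivial.ext} and Remark~\ref{rem:not.hw.trivial.ext.n=1}, and finally to invoke the standard homological criterion for semisimplicity. Throughout I work with $n\gg 0$, the regime in which the two preceding lemmas apply.

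First I would fix the type. By Lemma~\ref{lem:components.same.type} every nonzero weight vector of $M$, and in particular every highest weight vector of a simple subquotient $P$ of $M_n$, has one and the same type $(*)\in\{\rm \hyperref[list:types.of.weights.exterior]{(i)} \rm - \hyperref[list:types.of.weights.schur.dual]{(v)}\}$, namely the type of $v$. With $(*)$ fixed, Lemma~\ref{lem:possible.h.w} pins down, for $n\gg 0$, the $\fb(<_n)$- or $\fb(>_n)$-highest weight of each simple subquotient $P$ of $M_n$ (it is a singular weight, hence among those listed): it is either $(0^{(\infty)}|0^{(x_n)})$, in which case $\g(n)P=0$ and $P$ is trivial, or one of the explicit weights exhibited there. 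I would then observe that each of these weights is the highest weight of one of the modules $S^{a_n}\bV_n$, $S^{a_n}\bV_n^*$, $\Lambda^{a_n}\bV_n$, $\Lambda^{a_n}\bV_n^*$, $S^\mu\bV_n$, $S^\mu\bV_n^*$ (or a parity twist thereof) occurring in {\rm \hyperref[sym.type.mod]{($\Omega_1'$)}-\hyperref[partition.type.mod.dual]{($\Omega_6'$)}}; for the dual types \hyperref[list:types.of.weights.symmetric.dual]{(iii)} and \hyperref[list:types.of.weights.schur.dual]{(v)} one first rewrites the $\fb(>_n)$-highest weight as a $\fb(<_n)$-highest weight via the odd reflections recorded in {\rm \hyperref[sym.type.mod.dual.b(<)]{(${\widetilde \Omega}_2$)}-\hyperref[partition.type.mod.dual.b(<)]{(${\widetilde \Omega}_6$)}}. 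Hence every simple subquotient of $M_n$ occurs in {\rm \hyperref[sym.type.mod]{($\Omega_1'$)}-\hyperref[partition.type.mod.dual]{($\Omega_6'$)}}.

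Next I would invoke the appropriate $\Ext$-vanishing. If $x_n>1$, that is, $\g=\fsl(\infty|m)$ with $m\ge 2$ or $\g=\fsl(\infty|\infty)$ with $n\gg 0$, then Lemma~\ref{lem:not.hw.trivial.ext} applies directly, its hypothesis that the relevant partitions have length much smaller than $n$ being automatic for $n\gg 0$; it yields $\Ext^1_{\g(n),\fh(n)}(P,Q)=0$ for every pair of simple subquotients $P,Q$, the case $P\cong Q$ included (a legal move of weight zero strictly alters the weight diagram, so there are no self-extensions). If instead $x_n=1$, i.e. $\g=\fsl(\infty|1)$, the same conclusion follows from Remark~\ref{rem:not.hw.trivial.ext.n=1}, whose hypotheses are precisely that the highest weights of $P$ and $Q$ be among those appearing in Lemma~\ref{lem:possible.h.w}.

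Finally, $M_n$ is a finite-dimensional $\fh(n)$-weight $\g(n)$-module, so it is an iterated extension of its composition factors inside the category of weight modules. Arguing by induction on length, a simple submodule $S\subseteq M_n$ gives an extension $0\to S\to M_n\to M_n/S\to 0$ classified by $\Ext^1_{\g(n),\fh(n)}(M_n/S,S)$, which vanishes since $M_n/S$ is semisimple by induction and $\Ext^1$ between any two composition factors of $M_n$ is zero; thus the extension splits and $M_n$ is semisimple. I expect the main obstacle to be the bookkeeping of the second paragraph: one must check that the finitely many weight shapes produced by Lemma~\ref{lem:possible.h.w}, in particular the ``boundary'' weights such as $(1^{(n)}|a,0^{(x_n-1)})$ arising when a partition fills the entire $\fsl(n)$-range, together with the parity twists, are genuinely among the modules to which Lemma~\ref{lem:not.hw.trivial.ext} applies, so that no pair of composition factors escapes the $\Ext$-vanishing.
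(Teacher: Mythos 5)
Your proposal is correct and follows essentially the same route as the paper: constrain the singular weights of the simple subquotients of $M_n$ via Lemma~\ref{lem:possible.h.w}, identify them with modules from {\rm \hyperref[sym.type.mod]{($\Omega_1'$)}-\hyperref[partition.type.mod.dual]{($\Omega_6'$)}}, and then apply Lemma~\ref{lem:not.hw.trivial.ext} for $m>1$ and Remark~\ref{rem:not.hw.trivial.ext.n=1} for $m=1$. The paper states this in two sentences; you have merely supplied the bookkeeping (type invariance, the odd-reflection rewriting of the dual types, and the standard length induction) that the paper leaves implicit.
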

\begin{proof}
The highest weights allowed for $P$ and $Q$ are the ones occurring in Lemma~\ref{lem:possible.h.w} (a) (respectively, (b)-(e)). The statement now follows from Lemma~\ref{lem:not.hw.trivial.ext} for $m>1$, and from Remark~\ref{rem:not.hw.trivial.ext.n=1} for $m=1$.
\end{proof}

\begin{lem}\label{lem:M_n.simple}
If $v\in M^{(\lambda | \gamma)}$ is a nonzero vector, then $M_n = \bU(\g(n)) v$ is a simple $\g(n)$-module for all $n\gg 0$.
\end{lem}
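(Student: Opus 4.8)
The plan is to combine the semisimplicity of $M_n$ supplied by Corollary~\ref{cor:M_n.semisimple} with a multiplicity–stabilization argument that feeds off the simplicity of $M$.

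First I would reduce to a convenient generator. If $M$ is the trivial module there is nothing to prove, so assume $M$ is nontrivial and fix a weight vector $v$ of \emph{nonzero} weight $\theta := (\lambda|\gamma)\neq 0$; it suffices to prove that $M_n=\bU(\g(n))v$ is simple for $n\gg 0$, because if $w$ is any other nonzero weight vector then $w\in \bU(\g(N))v=M_N$ for some $N$ (as $M$ is simple), and once $M_n$ is simple for $n\geq N$ we get $\bU(\g(n))w=M_n$. By Corollary~\ref{cor:M_n.semisimple} write $M_n=\bigoplus_i P_i$ with each $P_i$ simple. Since $v$ generates $M_n$, its projection to every $P_i$ is nonzero, so each $P_i$ contains $\theta$ in its support. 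No $P_i$ is trivial: a trivial summand is concentrated in weight $0$, while its would-be component of $v$ lies in $P_i^\theta=0$ because $\theta\neq 0$. Thus every $P_i$ is one of the nontrivial highest weight modules of Lemma~\ref{lem:possible.h.w}.

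Next I would identify the $P_i$. The key claim is that for $n\gg 0$ all the summands share a single $\fb(<_n)$-highest weight $\lambda(n)$ from Lemma~\ref{lem:possible.h.w}, namely the one matching the common type of the weight vectors of $M$ (Lemma~\ref{lem:components.same.type}): writing $\theta$ as the projection of a representative $(\nu|\diamond)$ with $\nu$ in the corresponding set among \hyperref[list:types.of.weights.exterior]{(i)}--\hyperref[list:types.of.weights.schur.dual]{(v)} (Lemma~\ref{lem:reduction.to.sl.weights}) pins down $\lambda(n)$, and Lemma~\ref{lem:components.same.type} forbids any other admissible highest weight from carrying a component of $v$. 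Granting this, $M_n\cong \bL_{\fb(<_n)}(\lambda(n))^{\oplus m_n}$ for some multiplicity $m_n$. I would then control $m_n$ from two sides. On the one hand $m_n=\dim M_n^{\lambda(n)}\leq \dim M^{\lambda(n)}\leq k$, where $k$ is a bound for $M$, so the $m_n$ are bounded. On the other hand, the uniqueness (up to scalar) of the embeddings $\bL_{\fb(<_n)}(\lambda(n))\hookrightarrow \bL_{\fb(<_{n+1})}(\lambda(n+1))$ established in Section~\ref{sec:classification.type.A} says that $\bL_{\fb(<_n)}(\lambda(n))$ occurs with multiplicity exactly one in $\soc\big(\bL_{\fb(<_{n+1})}(\lambda(n+1))|_{\g(n)}\big)$; since $M_n$ is a semisimple $\g(n)$-submodule of $M_{n+1}$ it lands in this socle, which forces $m_n\leq m_{n+1}$. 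A bounded non-decreasing sequence of integers stabilizes, so $m_n=m$ for $n\geq N$.

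Finally, for $n\geq N$ the same uniqueness forces each inclusion $M_n=\bL_{\fb(<_n)}(\lambda(n))^{\oplus m}\hookrightarrow \bL_{\fb(<_{n+1})}(\lambda(n+1))^{\oplus m}=M_{n+1}$ to be the fixed embedding tensored with an invertible $m\times m$ matrix; absorbing these matrices into a change of basis identifies the whole direct system with $m$ copies of $\big(\bL_{\fb(<_n)}(\lambda(n))\big)_n$. Hence $M=\varinjlim M_n\cong \bL^{\oplus m}$ with $\bL:=\varinjlim \bL_{\fb(<_n)}(\lambda(n))$ locally simple, and a locally simple module is simple. As $M$ is simple, $m=1$, so $M_n=\bL_{\fb(<_n)}(\lambda(n))$ is simple for $n\gg 0$.

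The main obstacle is the identification step, i.e. showing that for $n\gg 0$ a single highest weight $\lambda(n)$ accounts for \emph{all} summands. The difficulty is that distinct admissible highest weights can have overlapping $\fh(n)$-supports, because weights of $\fsl(n|x_n)$ are defined only modulo $\C(1^{(n)}|-1^{(x_n)})$ and this overlap genuinely occurs (for instance when $m=\infty$). Consequently, excluding a second isotype cannot be done by a crude support count and must instead use the fine information carried by Lemmas~\ref{lem:reduction.to.sl.weights}, \ref{lem:components.same.type} and \ref{lem:possible.h.w}; the remaining multiplicity and simplicity steps are then formal.
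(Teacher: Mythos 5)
Your strategy is genuinely different from the paper's, and the step you yourself flag as ``the main obstacle'' is a real gap, not a technicality. Everything after ``Granting this, $M_n\cong \bL_{\fb(<_n)}(\lambda(n))^{\oplus m_n}$'' is conditional on the claim that for $n\gg 0$ all simple summands of $M_n$ are isomorphic, and you give no argument for that claim: Lemma~\ref{lem:components.same.type} only forces all summands to have the same \emph{type} among (i)--(v), and Lemma~\ref{lem:possible.h.w} still leaves, within a fixed type, infinitely many admissible highest weights (different $a$ in case (c), different partitions in case (a), plus the extra family $(1^{(n)}|a,0^{(x_n-1)})$), whose supports can genuinely overlap because $\fsl(n|x_n)$-weights are only defined modulo $\C(1^{(n)}|-1^{(x_n)})$ --- exactly the difficulty you name and then leave unresolved. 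So the isotypicality is asserted, not proved. A secondary weak point: the multiplicity-one-in-the-socle claim you draw from Section~\ref{sec:classification.type.A} is stronger than what is established there; the paper proves uniqueness of embeddings for the $S^a$, $\Lambda^a$ families and uniqueness of embeddings of $\g(n)_0$-modules sending highest weight vectors to highest weight vectors, whereas you need that $\bL_{\fb(<_n)}(\lambda(n))$ occurs exactly once in $\soc\bigl(\bL_{\fb(<_{n+1})}(\lambda(n+1))|_{\g(n)}\bigr)$ for every admissible $\lambda(n)$, which would also require justification.

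The paper closes precisely this hole with Lemma~\ref{lem:A1}: since $M$ is simple and bounded, the weight space $M^{(\lambda|\gamma)}$ is a \emph{simple} module over $\bU_n^0$ (the centralizer of $\fh$ in $\bU(\g(n))$) for $n\gg 0$. Combined with Corollary~\ref{cor:M_n.semisimple}, any submodule $K\subseteq M_n$ is a direct summand, so $M_n^{(\lambda|\gamma)}=K^{(\lambda|\gamma)}\oplus W^{(\lambda|\gamma)}$ as $\bU_n^0$-modules; simplicity forces one of the two pieces to vanish, and since $v$ generates $M_n$ this gives $K=0$ or $K=M_n$. This single device rules out two summands --- non-isomorphic or isomorphic --- in one stroke, with no need to identify highest weights or track multiplicities. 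I would either replace your identification-plus-stabilization scheme with this centralizer argument, or, if you want to keep your route, supply an actual proof of the single-isotype claim.
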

\begin{proof}
By Lemma~\ref{lem:A1} from the Appendix, there exists $N\gg 0$ such that $M^{(\lambda | \gamma)}$ is a simple $\bU^0_N$-module. A standard argument shows that $M_n$ is a simple $\g(n)$-module for all $n\geq N$. Indeed, by Corollary~\ref{cor:M_n.semisimple}, any submodule $K\subseteq M_n$ yields a split exact sequence of $\g(n)$-modules
	\[
0\to K\to M_n\to W\to 0.
	\]
This sequence provides an exact sequence of $\bU_n^0$-modules
	\[
0\to K^{(\lambda | \gamma)}\to M_n^{(\lambda | \gamma)}\to W^{(\lambda | \gamma)}\to 0.
	\]
Since $M_n^{(\lambda | \gamma)}$ is a simple $\bU_n^0$-module, we have $K^{(\lambda | \gamma)} = 0$ or $K^{(\lambda | \gamma)} = M_n^{(\lambda | \gamma)}$. If $K^{(\lambda | \gamma)} = 0$ then $v\in W$
 and $M_n = \bU(\g(n))v = W$, which implies $K=0$. Similarly, if $K^{(\lambda | \gamma)} = M_n^{(\lambda | \gamma)}$ we conclude that $M_n = K$.
\end{proof}

\begin{theo}\label{thm:classification.sl.case}
Let $\g = \fsl(\infty|m)$ for $m\in \Z_{\geq 1}\cup\{\infty\}$ and let $M$ be an integrable bounded simple weight $\g$-module. Then the following statements hold:
\begin{enumerate}
\item $M$ is locally simple.
\item $M$ is isomorphic to one of the following modules: $S^\mu \bV$, $S^\mu \bV_*$, $\Pi S^\mu \bV$, $\Pi S^\mu \bV_*$, $S_\cA^\infty \bV$, $S_\cA^\infty \bV_*$, $\Lambda_\cA^{\infty} \bV$, or $\Lambda_\cA^{\infty} \bV_*$. If $m=1$, then $M$ can also be isomorphic to $\bL_{\fb(>)}(0^{(\infty)}|a)$ or $\bL_{\fb(<)}(0^{(\infty)}|a)$ for $a\in \C\setminus \Z$.
\item All isomorphisms between simple modules appearing in (b) are: $S_\cA^\infty\bV\cong S_{\cA'}^\infty\bV$, $S_\cA^\infty\bV_*\cong S_{\cA'}^\infty\bV_*$, $\Lambda_\cA^\infty\bV\cong \Lambda_{\cA'}^\infty\bV$ and $\Lambda_\cA^\infty\bV_*\cong \Lambda_{\cA'}^\infty\bV_*$ if and only if there exists $N>0$ such that $(a_i, b_i)=(a_i', b_i')$ for all $i\geq N$; $S^\emptyset \bV\cong S^\emptyset \bV_*\cong \C$ and $\Pi S^\emptyset \bV\cong \Pi S^\emptyset \bV_*\cong \Pi \C$ ($\emptyset$ stands for the empty partition).
\end{enumerate}
\end{theo}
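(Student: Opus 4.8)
The theorem has three parts that build on the machinery developed earlier in the section. Part (a) asserts local simplicity, part (b) gives the classification list, and part (c) determines the isomorphisms among the listed modules. The natural order is to prove (a) and (b) essentially together, using the filtration $M_n = \bU(\g(n))v$, and then to handle (c) separately by a support/weight-diagram comparison.

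**Parts (a) and (b).** First I would fix a nonzero weight vector $v \in M^{(\lambda|\gamma)}$ and form the finite-dimensional $\g(n)$-modules $M_n = \bU(\g(n))v$. Lemma~\ref{lem:M_n.simple} immediately gives that $M_n$ is a simple $\g(n)$-module for all $n \gg 0$, and since $M = \bigcup_n M_n$ (as $M$ is simple and $v$ generates it), this yields $M \cong \varinjlim_{n\gg0} M_n$, proving (a). For (b), the point is to identify which chain of simple $\g(n)$-modules $M_n$ can be. By Lemma~\ref{lem:components.same.type} every weight vector of $M$ has a single well-defined type $(*) \in \{$\hyperref[list:types.of.weights.exterior]{(i)}--\hyperref[list:types.of.weights.schur.dual]{(v)}$\}$, inherited from the $\fsl(\infty)$-component of the $\g_0'$-decomposition. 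Lemma~\ref{lem:possible.h.w} then pins down the possible $\fb(<_n)$-highest (or $\fb(>_n)$-highest) weights of $M_n$ to a short explicit list depending only on the type. Matching these highest weights against the modules defined via \hyperref[sym.type.mod]{($\Omega_1'$)}--\hyperref[partition.type.mod.dual]{($\Omega_6'$)} forces $M_n$ to be one of $S^{a_n}\bV_n$, $S^{a_n}\bV_n^*$, $\Lambda^{a_n}\bV_n$, $\bL_{\fb(<_n)}(\lambda(n))$ for a partition, etc. (up to a parity shift $\Pi^{b_n}$), and taking the direct limit identifies $M$ with one of the modules in the list. The special case $m=1$, $x_n=1$ is where Lemma~\ref{lem:possible.h.w} permits a non-integral mark $a \in \C\setminus\Z$ on the single odd coordinate; here $M_n \cong \bL_{\fb(<_n)}(0^{(\infty)}|a)$ and one checks these assemble into the extra one-dimensional-fiber modules $\bL_{\fb(<)}(0^{(\infty)}|a)$ and $\bL_{\fb(>)}(0^{(\infty)}|a)$.

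**Part (c).** To determine the isomorphisms I would compare supports, using the explicit sets \hyperref[list:types.of.weights.exterior]{(i)}--\hyperref[list:types.of.weights.schur.dual]{(v)} and the extension formula \eqref{eq:extention.formula}. Two modules of different types cannot be isomorphic because their supports, projected to $\fh_\fsl^*$, lie in genuinely different subsets of $\C^\infty$ (e.g.\ $S_A$ consists of nonnegative marks while $S_A^*$ has nonpositive marks, and $\Lambda_A$ consists of $0/1$-valued marks). Within a single type, the support of $S_\cA^\infty\bV$ (and likewise for the other three families) depends on $\cA = ((a_n,b_n))$ only through its eventual behavior: the relation $B \approx A$ in \hyperref[list:types.of.weights.exterior]{(i)}, respectively the tail condition $\lambda_i = a_i - a_{i-1}$ for $i > n$ in \hyperref[list:types.of.weights.symmetric]{(ii)}, shows that two sequences $\cA, \cA'$ give the same support exactly when they eventually agree. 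The parity label $b_n$ is recovered from the $\Z_2$-grading of the weight spaces, so equality of modules forces $(a_i,b_i)=(a_i',b_i')$ for $i\geq N$; conversely, eventual agreement gives an isomorphism of the direct limits since the two chains coincide from some $\g(N)$ onward. Finally, for the empty partition $S^\emptyset\bV \cong S^\emptyset\bV_* \cong \C$ is the trivial module, and $\Pi$ applied to it gives the parity-shifted copy, accounting for the last family of coincidences.

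**Main obstacle.** The hard part will be the bookkeeping in part (b): showing that the \emph{only} highest weights that survive are exactly those in Lemma~\ref{lem:possible.h.w}, and that the parity shifts $\Pi^{b_n}$ match up coherently along the direct system so that the limit is literally one of the named modules rather than some twisted variant. Keeping track of the odd-reflection identifications (${\widetilde\Omega}_2$)--(${\widetilde\Omega}_6$) that rewrite $\bL_{\fb(>_n)}$-highest weights as $\bL_{\fb(<_n)}$-highest weights, and ensuring the chosen embeddings of $\g(n)$-modules are compatible, is where the real care is needed; the $m=1$ case must be isolated throughout because there the odd part is one-dimensional and the integrality constraints weaken.
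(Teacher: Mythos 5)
Your proposal is correct and follows essentially the same route as the paper: part (a) via Lemma~\ref{lem:M_n.simple} and the filtration $M_n=\bU(\g(n))v$, part (b) by reading off the admissible highest weights from Lemma~\ref{lem:possible.h.w} (with the $m=1$ exceptional modules extracted via odd reflections), and part (c) by noting that an isomorphism of locally simple direct limits forces the finite-level constituents to agree eventually. Your support-comparison phrasing of (c) is just a concrete restatement of the paper's observation that the weights of any weight vector determine the defining chain of embeddings.
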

\begin{proof}
Let $v\in M^{(\lambda | \gamma)}\setminus \{0\}$. By Lemma~\ref{lem:M_n.simple} the $\g(n)$-module $M_n = \bU(\g(n)) v$ is simple for all $n\gg 0$. In particular, $M = \bigcup_n M_n$ and $M$ is locally simple. This proves part (a). Part (b) follows from Lemma~\ref{lem:possible.h.w}. \details{For the case $m=1$, we can have a sequence of $\fb(<_n)$-highest weights of the form $(0^{(\infty)},1^{(n)}|a_n)$ for $a_n\in \C$ (see Lemma~\ref{lem:possible.h.w}~(a)). Assume first that $a_n\notin \Z_{\geq -n}$ for some $n$. Then, by using odd reflections we see that $M_n\cong \bL_{\fb(<_n)}(0^{(\infty)},1^{(n)}|a_n)\cong \bL_{\fb(>_{n})}(0^{(\infty)},0^{(n)}|a_n+n)$ and $M_{n+1}\cong \bL_{\fb(<_{n+1})}(0^{(\infty)},1^{(n+1)}|a_n-1)\cong \bL_{\fb(>_{n+1})}(0^{(\infty)},0^{(n+1)}|a_n+n)$. Indeed, the highest weight of $M_n$ is $(0^{(\infty)},1^{(n)}|a_n)$ and the highest weight of $M_{n+1}$ is $(0^{(\infty)},1^{(n+1)}|a_{n+1})$, but since it has to be obtained from $(0^{(\infty)},1^{(n)}|a_n)$ by the action of $X_{\delta_{n+1}-\varepsilon}$ (otherwise, since $X_{\delta_{i}-\varepsilon}$ annihilates $(0^{(\infty)},1^{(n)}|a_n)$ for all $1\leq i\leq n$ the $\fb(<_{n+1})$-highest weight of $M_{n+1}$ would be $(0^{(\infty)},1^{(n)},0|a_n)$, contradicting case (a) of Lemma~\ref{lem:possible.h.w}) we see that $a_{n+1}=a_n-1$. Thus $M\cong \bL_{\fb(>)}(0^{(\infty)}|a)$ for $a=a_n+n\in \C\setminus \Z_{\geq 0}$. Moreover, if $a\in \Z_{<0}$, then $\bL_{\fb(>_{n})}(0^{(\infty)},0^{(n)}|a)=\bL_{\fb(>_{n})}(-1^{(\infty)},-1^{(n)}|a+1)\cong \Lambda^{n+a+1}\bV_n^*$ for all $n$, which implies $M\cong \Lambda_\cA^{\infty} \bV_*$ for a suitable $\cA$. If $a_k\in \Z_{\geq -k}$ for some $k$, then since $a_{k+1}=a_k-1$ for all $k$, we may assume that $n\gg 0$ so that $a_n\in \{-1,\ldots, -n\}$. Now, by using odd reflections we see that $M_n\cong \bL_{\fb(<_n)}(0^{(\infty)},1^{(n)}|a_n)\cong \bL_{\fb(>_{n})}(0^{(\infty)},0^{(-a_n-1)},1^{(n+a_n+1)}|-1)= \bL_{\fb(>_{n})}(-1^{(\infty)},-1^{(-a_n-1)},0^{(n+a_n+1)}|0)\cong \Lambda^{-a_n-1}\bV_n^*$, and hence we obtain $M_{n+1}\cong \bL_{\fb(>_{n+1})}(-1^{(\infty)},-1^{(-a_n)},0^{(n+a_n)}|0) \cong \Lambda^{-a_n}\bV_{n+1}^*$. This implies $M\cong \Lambda_\cA^{\infty} \bV_*$ for a suitable $\cA$. Similarly, if we have a sequence of $\fb(>_n)$-highest weights of the form $(-1^{(n)}|a_n)$ (see Lemma~\ref{lem:possible.h.w}~(b)), then we obtain that $M$ is isomorphic to a module of the form $\bL_{\fb(<)}(0^{(\infty)}|a)$ for some $a\in \C$, or to $\Lambda_\cA^{\infty} \bV$ for a suitable $\cA$}. Finally, one direction of (c) is clear, the other follows from the observation that if a locally simple module $M$ is isomorphic to $\varinjlim M_n$ and to $\varinjlim M_n'$, then $M_n\cong M_n'$ for $n\gg 0$. \details{For any given weight vector, its weight is determined by the sequence of embeddings defining the module, and, in the other direction, any sequence of embeddings can be obtained by looking at the weight any given weight vector. Since isomorphisms preserve weights the result follows.}
\end{proof}

Suppose that $\g=\fsl(\infty|m)$ with $m<\infty$, and that $M$ is isomorphic to $S_\cA^\infty \bV$. Notice that, for all $n\geq m+1$, if $M_n\cong S^{a_n} \bV_n$ (respectively, $M_n\cong \Pi S^{a_n} \bV_n$) then $M_{n+1}\cong S^{a_{n+1}} \bV_{n+1}$ (respectively, $M_{n+1}\cong \Pi S^{a_{n+1}} \bV_{n+1}$). For the case where $M$ is isomorphic to $S_\cA^\infty \bV_*$, $\Lambda_\cA^{\infty} \bV$ or $\Lambda_\cA^{\infty} \bV_*$ the situation is analogous. Thus Theorem~\ref{thm:classification.sl.case} can be refined as follows:

\begin{cor}\label{cor:classification.sl.case}
If $M$ is an integrable bounded simple weight $\fsl(\infty | m)$-module ($m<\infty$), then $M$ is isomorphic to one of the following modules: $S^\mu \bV$, $S^\mu \bV_*$, $\Pi S^\mu \bV$, $\Pi S^\mu \bV_*$, $S_\cA^\infty \bV$, $S_\cA^\infty \bV_*$, $\Lambda_\cA^{\infty} \bV$, or $\Lambda_\cA^{\infty} \bV_*$ (and additionally $\bL_{\fb(<)}(0^{(\infty)}|a)$ for $a\in \C$ if $m=1$), where the sequence $(b_n)$ is constant.
\end{cor}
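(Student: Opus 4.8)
The plan is to extract the statement from Theorem~\ref{thm:classification.sl.case}, using the finiteness of $m$ to pin down the parity pattern, and then to normalize the representative via the isomorphism criterion in Theorem~\ref{thm:classification.sl.case}(c). Theorem~\ref{thm:classification.sl.case}(b) already tells us that $M$ is isomorphic to one of $S^\mu \bV$, $S^\mu \bV_*$, $\Pi S^\mu \bV$, $\Pi S^\mu \bV_*$, $S_\cA^\infty \bV$, $S_\cA^\infty \bV_*$, $\Lambda_\cA^\infty \bV$, $\Lambda_\cA^\infty \bV_*$, together with the extra modules appearing for $m=1$. The Schur-type modules carry no sequence $(b_n)$, so for $m<\infty$ the only genuinely new content is the assertion that, in the four $\cA$-type cases, $(b_n)$ may be taken constant.

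The crux is a parity count. Since $m<\infty$ we have $x_n=m$ for every $n$, so each embedding $\g(n)\hookrightarrow\g(n+1)$ adjoins a single \emph{even} basis vector (of weight $\delta_{n+1}$) to the natural module, and never an odd one. Because $M$ is locally simple by Theorem~\ref{thm:classification.sl.case}(a), the inclusions $M_n\hookrightarrow M_{n+1}$ are even $\g(n)$-equivariant embeddings of the simple module $\Pi^{b_n}S^{a_n}\bV_n$ into $\Pi^{b_{n+1}}S^{a_{n+1}}\bV_{n+1}$. Such an inclusion amounts to an even $\g(n)$-embedding $S^{a_n}\bV_n\hookrightarrow S^{a_{n+1}}\bV_{n+1}$ when $b_{n+1}=b_n$, and to one of the form $S^{a_n}\bV_n\hookrightarrow \Pi S^{a_{n+1}}\bV_{n+1}$ when $b_{n+1}\neq b_n$. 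By the discussion in Section~\ref{sec:classification.type.A} the second type exists only when $x_n<x_{n+1}$, which now fails; hence only the parity-preserving embedding is available, forcing $b_{n+1}=b_n$ for all $n\geq m+1$. This is precisely the observation recorded just before the corollary, and the identical count applies to the conatural and exterior families since in each of them the adjoined coordinate is again even. Thus $(b_n)$ is constant on its tail.

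Finally, I would normalize and dispose of the $m=1$ case. By Theorem~\ref{thm:classification.sl.case}(c) the isomorphism type of an $\cA$-type module depends only on the tail of $\cA$, so replacing the finitely many initial pairs $(a_n,b_n)$ with $n\leq m$ by pairs carrying the common tail value of $b_n$ produces an isomorphic module with $(b_n)$ constant throughout. The supplementary modules $\bL_{\fb(<)}(0^{(\infty)}|a)$, $a\in\C$, that occur when $m=1$ are furnished verbatim by the $m=1$ analysis inside the proof of Theorem~\ref{thm:classification.sl.case}. The single place demanding care is the uniform parity count across all four $\cA$-types; but as indicated this reduces in every case to the evenness of the newly adjoined coordinate, so no additional argument is required.
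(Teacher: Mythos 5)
Your argument is correct and is essentially the paper's own: the corollary rests on the single observation (recorded in the paragraph preceding it, and grounded in the discussion of embeddings in Section~\ref{sec:classification.type.A}) that for $m<\infty$ one has $x_n=x_{n+1}=m$, so the parity-changing embeddings $\Pi^{b_n}S^{a_n}\bV_n\hookrightarrow\Pi^{b_{n+1}}S^{a_{n+1}}\bV_{n+1}$ with $b_{n+1}\neq b_n$ are unavailable and $(b_n)$ stabilizes, after which Theorem~\ref{thm:classification.sl.case}(c) lets you replace the finite initial segment. Your parity count and tail normalization match the paper's reasoning, so nothing further is needed.
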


\begin{prop}\label{prop:M.h.w}
The following statements hold:
\begin{enumerate}
\item The modules $S^\mu \bV$ and $\Pi S^\mu \bV$ (respectively, $S^\mu \bV_*$ and $\Pi S^\mu \bV_*$) are $\fb(\prec)$-highest weight modules if and only if there are $i_1,\ldots, i_k\in \Z_{<0}$ such that $i_1\prec \cdots \prec i_k\prec \Z_{<0} \setminus \{i_1,\ldots, i_k\}$ (respectively, $\Z_{<0} \setminus \{i_1,\ldots, i_k\}\prec i_k\prec \cdots \prec i_1$).
\item If either $|\{n\in \Z_{>0}\mid a_{n+1} - a_n>1\}|=\infty$ or $|\{b_n=p\}|=\infty$ for all $p\in \{\Id, \Pi\}$, then the $\g$-modules $\Lambda^{\infty}_\cA \bV$ and $\Lambda^{\infty}_\cA \bV_*$ are not highest weight modules with respect to any Borel subalgebra of $\g$. If $|\{n\in \Z_{>0}\mid a_{n+1} - a_n>1\}|<\infty$ and $|\{b_n=p\}|<\infty$ for some $p\in \{\Id, \Pi\}$, then the $\g$-module $\Lambda^{\infty}_\cA \bV$ (respectively, $\Lambda^{\infty}_\cA \bV_*$) is a $\fb(\prec)$-highest weight module if and only if $A\prec (\Z_{>0}\setminus A)$ (respectively, $(\Z_{>0}\setminus A)\prec A$).
\item The modules $S^\infty_\cA \bV$, $S^\infty_\cA \bV_*$ are not highest weight modules with respect to any Borel subalgebra of $\g$.
\end{enumerate}
\end{prop}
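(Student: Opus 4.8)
The plan rests on one reduction: since $M$ is simple, it is a $\fb(\prec)$-highest weight module if and only if it contains a nonzero $\fb(\prec)$-singular vector, as any such vector generates $M$. So for each module in the list and each order $\prec$ I would decide whether a $\fb(\prec)$-singular vector exists. A $\fb(\prec)$-singular vector is in particular annihilated by $\fn(\prec)_{\bar 0}=\fn(\prec)\cap\g_{\bar 0}$, hence is $\fb(\prec)_{\bar 0}$-singular. Using the decomposition $M|_{\g_{\bar 0}}\cong\bigoplus_i M(i)$ of Lemma~\ref{lem:ss.over.Lie}, with $M(i)\cong S_i\boxtimes T_i$ where $S_i$ is a simple integrable bounded weight $\fgl(\infty)$-module and $T_i$ is a finite-dimensional $\fsl(m)$-module, any $\fb(\prec)_{\bar 0}$-singular vector projects to a $\fb(\prec)_{\bar 0}$-singular vector in some $M(i)$, whose image in $S_i$ is a $\fb(\prec')$-highest weight vector for the order $\prec'$ induced on the $\delta$-indices. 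Thus a necessary condition is that some $\fgl(\infty)$-component $S_i$ be $\fb(\prec')$-highest weight, and Proposition~\ref{prop:h.w.gl(infty)} records exactly when this occurs.

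This already settles part (c): by Lemma~\ref{lem:components.same.type} every $\g_{\bar 0}$-component of $S^\infty_\cA\bV$ (resp.\ $S^\infty_\cA\bV_*$) is of symmetric type $S^\infty_B V$ (resp.\ $S^\infty_B V_*$), and by Proposition~\ref{prop:h.w.gl(infty)}(a) no such module is $\fgl(\infty)$-highest weight for any order; hence no $\fb(\prec)_{\bar 0}$-singular vector exists, for any $\prec$. The same mechanism drives the negative half of part (b): here all components are of exterior type $\Lambda^{\frac{\infty}{2}}_{B_i}V$, and I would track the $\fb(\prec_n)$-highest weights of the finite-dimensional pieces $\Pi^{b_n}\Lambda^{a_n}\bV_n$ through the odd reflections preceding Lemma~\ref{lem:not.hw.trivial.ext}. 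The point is that this sequence of highest weights can stabilize to a single highest weight only when at most one box is added at a time for $n\gg 0$, i.e.\ $a_{n+1}-a_n\le 1$ eventually, and when the parity $b_n$ is eventually constant; if $|\{n\mid a_{n+1}-a_n>1\}|=\infty$ or the $b_n$ do not stabilize, the would-be highest weight spills inconsistently between the $\delta$- and $\varepsilon$-sides and no $\fb(\prec)$-singular vector survives in the limit, for any $\prec$.

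For the sufficiency directions---the remaining half of (a) and of (b)---I would start from the $\fb(\prec')$-highest weight vector $v$ of the relevant $\fgl(\infty)$-component furnished by Proposition~\ref{prop:h.w.gl(infty)}(b),(c) under the stated order hypotheses (support indices $\prec'$-minimal in the Schur case; $A\prec(\Z_{>0}\setminus A)$ in the exterior case) and promote it to a $\fb(\prec)$-singular vector by checking that the odd positive root vectors kill it. For most odd roots this is a support argument: the modules $S^\mu\bV$, $\Lambda^\infty_\cA\bV$ and their duals are polynomial in $\bV$, so all weights have $\delta$- and $\varepsilon$-coordinates of a prescribed sign, and an operator $X_{\delta_i-\varepsilon_j}$ or $X_{\varepsilon_j-\delta_i}$ sends $v$ to a weight with a coordinate of the forbidden sign, which by Lemmas~\ref{lem:reduction.to.sl.weights} and~\ref{lem:possible.h.w} lies outside $\Supp M$, forcing the image to vanish. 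The only delicate odd operators are those that stay inside $\Supp M$; these are precisely the ones rendered negative by the order condition, or else cleared by first moving to the highest-weight presentation via odd reflections, after which matching the extremal weight against Proposition~\ref{prop:h.w.gl(infty)} yields the combinatorial description on the $\delta$-indices $\Z_{<0}$. I expect this last verification---controlling the surviving odd operators and the interplay of $\prec$, the finite $\fsl(m)$-part, and the odd reflections---to be the main obstacle.
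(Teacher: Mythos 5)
Your proposal rests on exactly the reduction the paper uses: a $\fb(\prec)$-highest weight vector of $M$ is in particular $\fb(\prec)_{0}$-singular, so via the decomposition of Lemma~\ref{lem:ss.over.Lie} the question is pushed down to the $\fgl(\infty)$-statement of Proposition~\ref{prop:h.w.gl(infty)}. The paper's proof consists of precisely this two-line reduction, so your extra detail on the sufficiency direction and on the negative half of part~(b) only elaborates what the paper leaves implicit.
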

\begin{proof}
For an arbitrary splitting Borel subalgebra $\fb\subseteq \g$, every $\fb$-highest weight vector $v\in M$ is a $\fb_0$-singular weight vector. Now the result follows from Proposition~\ref{prop:h.w.gl(infty)}.
\end{proof}

\subsection{The case of $\fq(\infty)$} 

Let $\lambda\in \C^\infty$. Recall that $\# \lambda$ denotes the number of nonzero marks of $\lambda$, and $[a]$ denotes the greatest integer in the number $a\in \Q$.

\begin{theo}\label{thm:classification.q}
An integrable simple weight $\fq(\infty)$-module $M$ is bounded if and only if $M\cong S^\gamma \bV := \bL_{\fb(<)} (\sum_{i=1}^k \gamma_i\varepsilon_i)$ or $M\cong S^\gamma \bV_* := \bL_{\fb(>)} (\sum_{i=1}^k -\gamma_i\varepsilon_i)$, for some partition $\pmb{\gamma} = (\gamma_1> \gamma_2> \cdots > \gamma_k)$. Moreover, $S^\gamma \bV\cong \Pi S^\gamma \bV$ and $S^\gamma \bV_*\cong \Pi S^\gamma \bV_*$ if and only if $k$ is odd.
\end{theo}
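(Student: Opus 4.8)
The plan is to reduce to the finite-dimensional representation theory of $\fq(n)$, where simple integrable highest weight modules correspond to strict partitions, and to exploit the odd Cartan subalgebra $\fh_{\bar 1}$ to control which modules survive at infinity. For the ``if'' direction, given a strict partition $\gamma=(\gamma_1>\cdots>\gamma_k>0)$ I put $\gamma(n):=(\gamma_1,\ldots,\gamma_k,0^{(n-k)})$; each $\gamma(n)$ is $\fq(n)$-dominant, so each $\bL_{\fb(<_n)}(\gamma(n))$ is finite-dimensional and their direct limit $S^\gamma\bV=\bL_{\fb(<)}(\sum\gamma_i\varepsilon_i)$ is integrable and simple. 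Boundedness holds because $S^\gamma\bV|_{\fsl(\infty)}$ is a finite sum of Schur modules $S^\nu V$, each of which is bounded. The module $S^\gamma\bV_*$ is treated symmetrically using the conatural module and $\fb(>)$.

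The crucial new ingredient for the ``only if'' direction is the odd Cartan. On any weight space $M^\lambda$ the odd diagonal elements $H_i\in\fh_{\bar 1}$ act with $\{H_i,H_j\}=2\delta_{ij}\lambda_i\cdot\mathrm{id}$, so every $H_i$ with $\lambda_i\neq 0$ is an invertible odd operator and distinct such operators anticommute. Were $\lambda$ to have infinitely many nonzero marks, $M^\lambda$ would admit infinitely many anticommuting invertible odd operators, forcing $\dim M^\lambda=\infty$ and contradicting boundedness. Hence every weight of $M$ has only finitely many nonzero marks.

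Applying Lemma~\ref{lem:ss.over.Lie} with $\fk=\g_{\bar 0}=\fgl(\infty)$ gives $M|_{\fsl(\infty)}=\bigoplus_i M(i)$ with each $M(i)$ a bounded simple $\fsl(\infty)$-module, hence of one of the five types of \cite[Proposition~4.5]{GP20}. The finiteness of nonzero marks excludes $\Lambda_A^{\frac{\infty}{2}}V$, $S_A^\infty V$ and $S_A^\infty V_*$, whose weights all have infinitely many nonzero marks, leaving only the Schur types $S^\mu V$ and $S^\mu V_*$. By an argument parallel to Lemma~\ref{lem:components.same.type} the odd action preserves this type, so a single type occurs and $\Supp M$ is bounded above (resp.\ below) in the $\fb(<)$-order; the extremal weight $\gamma$ (a partition, resp.\ its negative) has $\fn(<)$-singular weight space, which generates $M$ by simplicity. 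Integrability over each $\fq(n)$ forces $\gamma$ to be $\fq$-dominant, i.e.\ strict, and $M$ is thus the simple highest weight module $\bL_{\fb(<)}(\sum\gamma_i\varepsilon_i)=S^\gamma\bV$ (resp.\ $S^\gamma\bV_*$).

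It remains to settle the parity assertion, which has no counterpart in the Lie algebra case. The highest weight space $U^\gamma$ is the irreducible supermodule over the Clifford algebra $\mathrm{Cl}_k$ generated by the $k$ operators $H_i$ with $\gamma_i\neq 0$, and $S^\gamma\bV\cong\Pi S^\gamma\bV$ holds exactly when $U^\gamma\cong\Pi U^\gamma$ as $\mathrm{Cl}_k$-supermodules. This is the type-M/type-Q dichotomy for Clifford superalgebras: the irreducible $\mathrm{Cl}_k$-supermodule is isomorphic to its parity shift precisely when $k$ is odd. As the number of nonzero parts is stably equal to $k$ for $n\gg 0$, this invariant passes to the limit, giving $S^\gamma\bV\cong\Pi S^\gamma\bV$ iff $k$ is odd, and likewise for $S^\gamma\bV_*$. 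The main obstacle is precisely this Clifford-module analysis: identifying the super-structure of $U^\gamma$ and verifying that the type invariant is inherited along the direct limit is the genuinely $\fq$-specific step, reflecting the non-reductivity of $\fq(n)$.
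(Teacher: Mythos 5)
Your proposal is correct and follows essentially the same route as the paper: the dimension of a weight space as a module over the Clifford algebra generated by the odd Cartan elements $H_i$ with $\lambda_i\neq 0$ bounds the number of nonzero marks, which forces every $\fsl(\infty)$-constituent $M(i)$ to be of Schur type $S^\mu V$ or $S^\mu V_*$, after which one extracts a $\fb(<)$-singular (resp.\ $\fb(>)$-singular) vector, uses integrability of the resulting $\fq(2)$-subquotients to force strictness of the partition, and settles the parity question by the type-$M$/type-$Q$ dichotomy for the Clifford module structure on the highest weight space. The only (immaterial) divergences are that you establish boundedness of $S^\gamma\bV$ via its restriction to $\fsl(\infty)$ rather than by embedding it into $\bigotimes_{i=1}^k S^{\gamma_i}\bV$, and that you spell out the Clifford-algebra argument for the parity assertion where the paper simply cites Penkov's Proposition~4.
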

\begin{proof}
Notice that $S^\gamma \bV$ (respectively, $S^\gamma \bV_*$) is bounded as it is a submodule of the bounded module $\bigotimes_{i=1}^k S^{\gamma_i} \bV$ (respectively, $\bigotimes_{i=1}^k S^{\gamma_i} \bV_*$). This proves one direction of the statement. For the other direction, note that the dimension formula for the weight spaces of $M$ from Section 2.3 shows that the number of nonzero marks of the weights of $M$ is bounded by some $l>0$. This implies that for any $i$, $M(i)\cong S^{\mu_i} V$ or $M(i)\cong S^{\mu_i} V_*$ for appropriate $\mu_i$. Fix $i_0$ and assume that $ M(i_0) \cong S^{\mu_0} V$. Let $v_{\mu_0}$ be a $\fb(<)_0$-highest weight vector of $M(i_0)$. Pick a $\fb(<_l)$-singular vector $w_0$ in $\bU(\fb(<_l)) v_{\mu_0} = \bU(\fb(<_l)_1) v_{\mu_0}$. Then $\fb(<_l) w_0 =0$, and $\g^{\varepsilon_i - \varepsilon_{j}}w_0=0$ for all $i>0$ and $j > l$, which implies that $w_0$ is a $\fb(<)$-highest weight vector of $M$. Since $M$ is simple, this shows the existence of isomorphism $M\cong \bL_{\fb(<)} (\sum \gamma_i\varepsilon_i)$ for some partition $\gamma_1 > \gamma_2 > \cdots >\gamma_k$ given by the weight of $w_0$. The strict inequality $\gamma_i>\gamma_{i+1}$ follows from the fact that $\gamma_i = \gamma_{i+1}$ implies that the simple $\fq(2)$-module $\bL_{\fb(<_2)}(\gamma_i, \gamma_{i+1})$ generated by $w_0$ is infinite dimensional \cite{Pen86}, and hence non-integrable. The case where $M(i) \cong S^{\mu_{0}} V_* $ is considered in a similar way. 

The statement that $S^\gamma \bV\cong \Pi S^\gamma \bV$ and $S^\gamma \bV_*\cong \Pi S^\gamma \bV_*$ if and only if $k$ is odd follows from \cite[Proposition~4]{Pen86}.
\end{proof}

\subsection{The remaining cases}\label{sec:remaining.cases}
Let $\g$ equal $\fosp_B(\infty|\infty)$, $\fosp_B(\infty|2k)$, $\fosp_B(m|\infty)$, $\fosp_C(2|\infty)$, $\fosp_D(\infty|\infty)$, $\fosp_D(\infty|2k)$, $\fosp_D(m|\infty)$, or $\bsp(\infty)$. In this section, $\tau$ denotes the map from the set of indices that label the standard basis of the Cartan subalgebra of $\g$ to the one-element set $\{1\}$.

Up to isomorphism, there are just two non-isomorphic spinor $\fo(2n)$-modules, $\cS_n^+$ and $\cS_n^-$, and there is a unique spinor $\fo(2n+1)$-module $\cS_n$. More precisely, consider $\cS_n^+ = L_{\fb(<_n, \tau)}(1/2,\ldots, 1/2)$, $\cS_n^- = L_{\fb(<_n, \tau)}(1/2,\ldots, 1/2,-1/2)$, and $\cS_n = L_{\fb(<_n, \tau)}(1/2,\ldots, 1/2)$.  Up to scalar, there are only two embeddings $\iota_{n}^\pm : \cS_{n-1}\hookrightarrow \cS_n$ and unique embeddings $\cS_{n-1}^+\hookrightarrow \cS_n^+$, $\cS_{n-1}^+\hookrightarrow \cS_n^-$, $\cS_{n-1}^-\hookrightarrow \cS_n^+$, and $\cS_{n-1}^-\hookrightarrow \cS_n^-$. For a given subset $A\subseteq \Z_{>0}$ we define the $\fo_B(\infty)$-module $\cS_A^B$ to be the direct limit of $\fo(2n+1)$-modules obtained from the sequence of embeddings $\{\varphi_n: \cS_{n-1}\hookrightarrow \cS_n\}$ such that $\varphi_n=\iota_n^+$ if $n\in A$ and $\varphi_n=\iota_n^-$ otherwise. In a similar way we define the $\fo_D(\infty)$-module $\cS_A^D$ to be the direct limit of $\fo(2n)$-modules obtained from the sequence of embeddings $\{\varphi_n:M_{n-1}\hookrightarrow M_n\}$ such that $M_i=\cS_i^+$ if $i\in A$ and $M_i=\cS_i^-$ otherwise. It follows from \cite[Proposition~5.3 and Theorem~5.5]{GP20} that any integrable bounded simple weight $\fo(\infty)$-module is isomorphic to $\cS_A^B$, $\cS_A^D$, or to the natural $\fo(\infty)$-module $V_\fo$.

Let $\omega_A\in \C^\infty$ be defined by setting $(\omega_A)_k =\frac{1}{2}$ if $k\in A$ and $(\omega_A)_k =-\frac{1}{2}$ otherwise. For  $A, A'\subseteq \Z_{>0}$ we write $A'\sim_B A$ if $A$ and $A'$ differ by finitely many elements, and we write $A'\sim_D A$ if $A$ and $A'$ differ by an even number of elements. By \cite[\S~5.2]{GP20}, we have $\Supp \cS_A^B=\{\omega_{A'} \in \C^{\Z_{>0}} \mid  A'\sim_B A\}$ and $\Supp \cS_A^D=\{\omega_{A'} \in \C^{\Z_{>0}} \mid A'\sim_D A\}$.

Finally, it also follows from \cite[Proposition~5.7]{GP20} that any nontrivial integrable bounded simple weight $\fsp(\infty)$-module is isomorphic to the natural $\fsp(\infty)$-module $V_\fsp$.

In Theorem~\ref{thm:classification.BCDp} below we will make use of the following remarks several times.

\begin{rem}\label{rem:supp.arguments}
\begin{enumerate}
\item Assume $\g$ equals $\fosp_B(\infty|\infty)$, $\fosp_B(\infty|2k)$, $\fosp_B(m|\infty)$, $\fosp_C(2|\infty)$, $\fosp_D(\infty|\infty)$, $\fosp_D(\infty|2k)$, $\fosp_D(m|\infty)$, or $\bsp(\infty)$. Notice that in all cases $\g_{\bar 0}\cong \fs_1\oplus \fs_2$, where $\fs_1$ is isomorphic to $\fo(\infty)$ or $\fs_2$ is isomorphic to $\fsp(\infty)$. In particular, for any constituent $M(i)$ of $M$, we have an isomorphism of (non-graded) $\g_{\bar 0}$-modules $M(i)\cong S(i)\boxtimes T(i)$, where $S(i)$ is isomorphic to an $\fs_1$-module of the form $\cS_A^B$, $\cS_A^D$, $V_\fo$ or $\C$ if $\fs_1\cong \fo(\infty)$, and $T(i)$ is isomorphic to an $\fs_2$-module of the form $V_\fsp$ or $\C$ if $\fs_2\cong \fsp(\infty)$. Since $M$ is a simple $\g$-module, any two weights of $M$ must differ from each other only by finitely many marks. This shows that once $S(i)$ is isomorphic to $V_\fo$ or $\C$, then we are not allowed to have any $S(j)$ isomorphic to $\cS_A^B$ or $\cS_A^D$. Similarly, if $S(i)$ is isomorphic to $\cS_A^B$ or $\cS_A^D$, then we are not allowed to have any $S(j)$ isomorphic to $V_\fo$ or $\C$. Also observe that if $S(i)\cong \C$ (respectively, $T(i)\cong \C$) for all $i$, then $\g_{\bar 1}M=0$. Since $\fh\subseteq [\g_{\bar 1}, \g_{\bar 1}]$, we obtain $\fh M=0$, which implies $M\cong \C$.
\item (Support arguments) Let $L$ be a weight $\g$-module, $\alpha\in \D$ be a root of $\g$, and $v\in L^\lambda$ be a nonzero weight vector. In what follows, by writing that \emph{support arguments} imply that $X_\alpha v = 0$, we mean that the vector $\alpha + \lambda\in \fh^*$ cannot lie in $\Supp L$.
\item Let $M$ be a $\fb$-highest weight $\g$-module with nonzero $\fb$-highest weight vector $v$. We define
	\[ 
|M| := \begin{cases} 
	M & \text{ if } |v| = {\bar 0} \\
	\Pi M  & \text{ if } |v|={\bar 1}.
  \end{cases}
	\]
\demo
\end{enumerate}
\end{rem}

We are now ready to state the main result of this section.

\begin{theo}\label{thm:classification.BCDp}
Let $\g$ equal $\fosp_B(\infty|\infty)$, $\fosp_B(\infty|2k)$, $\fosp_B(m|\infty)$, $\fosp_D(\infty|\infty)$, $\fosp_D(\infty|2k)$, $\fosp_D(m|\infty)$, $\fosp_C(2|\infty)$ or $\bsp(\infty)$. A nontrivial integrable simple weight $\g$-module $M$ is bounded if and only if $M\cong \bV$ or $M\cong \Pi \bV$. In particular, $M$ is locally simple.
\end{theo}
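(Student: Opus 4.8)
The plan is to prove the forward (``only if'') direction by restricting $M$ to the even part $\g_{\bar 0}$, reading off the even constituents from the known classification of bounded simple modules over $\fo(\infty)$ and $\fsp(\infty)$, and then using the odd part $\g_{\bar 1}$ to force every constituent to be natural or trivial. The converse is routine: every weight space of $\bV$ is at most one-dimensional, so $\bV$ and $\Pi\bV$ are bounded, and they are visibly integrable and simple; since $\bV=\varinjlim\bV_n$ with each natural $\g(n)$-module $\bV_n$ simple, this also yields the local simplicity asserted at the end.

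First I would apply Lemma~\ref{lem:ss.over.Lie} to write $M|_{\g_{\bar 0}}\cong\bigoplus_i M(i)$ with each $M(i)$ integrable, bounded and simple, and then factor $M(i)\cong S(i)\boxtimes T(i)$ as in Remark~\ref{rem:supp.arguments}(a). When the orthogonal factor of $\g_{\bar 0}$ is $\fo(\infty)$ the classification recalled above forces $S(i)\in\{\cS_A^B,\cS_A^D,V_\fo,\C\}$, and when the symplectic factor is $\fsp(\infty)$ it forces $T(i)\in\{V_\fsp,\C\}$. Since $M$ is nontrivial, Remark~\ref{rem:supp.arguments}(a) already rules out all $S(i)$ (resp.\ all $T(i)$) being trivial, and, because two weights of a simple $M$ differ by only finitely many marks, it forces all nontrivial $S(i)$ to be of one single kind.

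The heart of the proof, and the step I expect to be the main obstacle, is showing that no constituent other than a natural or trivial one can occur. The mechanism is a weight-overflow argument routed through whichever of the two factors of $\g_{\bar 0}$ is infinite-dimensional: over $\fo(\infty)$ a bounded simple module has all marks in $\{0,\pm1\}$ (natural) or all equal to $\pm\tfrac{1}{2}$ (spinor), and over $\fsp(\infty)$ it has marks in $\{0,\pm1\}$ or is trivial. Since every odd root has the form $\pm\varepsilon_i\pm\delta_r$ (or $\pm\varepsilon_i$) and so simultaneously shifts an orthogonal and a symplectic mark by $\pm1$, a constituent that is neither natural nor trivial can be made, by applying a suitable odd root vector, to produce a weight whose mark in the infinite factor equals $\pm\tfrac{3}{2}$ (pushing a spinor mark out of range) or $\pm2$ (pushing a natural mark out of range), which is impossible in a bounded module. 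The delicate point, which I expect to require the real work, is the nonvanishing of the pertinent odd action: here I would use that $\fh\subseteq[\g_{\bar 1},\g_{\bar 1}]$ forces $\g_{\bar 1}$ to act nontrivially on the nontrivial simple $M$, and exploit bracket relations---for instance realizing an even root vector $X_{\delta_s-\delta_r}$, which acts nonzero between adjacent spinor weight spaces, as an anticommutator $[\,X_{\varepsilon_i-\delta_r},X_{-\varepsilon_i+\delta_s}\,]$ of two odd root vectors---to force one of those odd vectors to move a boundary mark past its allowed value. The periplectic case $\bsp(\infty)$, where $\g_{\bar 0}\cong\fsl(\infty)$ and an odd root adds two $\varepsilon$'s, is handled by the same principle, excluding the constituents $S_\cA^\infty V$, $\Lambda_\cA^{\infty}V$ and $S^\mu V$, $S^\mu V_*$ with $|\mu|>1$.

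Once the non-natural constituents are ruled out, every $M(i)$ is one of $V_\fo\boxtimes\C$, $\C\boxtimes V_\fsp$ or $\C\boxtimes\C$, so each weight of $M$ has at most one nonzero $\delta$-mark and at most one nonzero $\varepsilon$-mark, each equal to $\pm1$, together with the zero weight; in particular $\Supp M\subseteq\Supp\bV$ and every nonzero weight space of $M$ is one-dimensional. Since the odd roots $\delta_r-\varepsilon_i$ join the weights $\varepsilon_i$ and $\delta_r$, simplicity of $M$ forces $\Supp M=\Supp\bV$ and the weight multiplicities of $M$ to coincide with those of $\bV$. Matching a generating (highest-weight) vector of $M$ with that of $\bV$ then yields $M\cong\bV$ or $M\cong\Pi\bV$ according to the parity of that vector, and local simplicity follows from $\bV=\varinjlim\bV_n$ as noted above.
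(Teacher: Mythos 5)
Your skeleton agrees with the paper's: decompose $M|_{\g_{\bar 0}}$ via Lemma~\ref{lem:ss.over.Lie}, identify the constituents $S(i)\boxtimes T(i)$ from the known classification over $\fo(\infty)$ and $\fsp(\infty)$, and kill the unwanted ones with odd root vectors. Your mechanism --- find an odd root $\alpha$ and a weight $\mu$ with both $\mu+\alpha$ and $\mu-\alpha$ outside $\Supp M$ while $\mu(h_\alpha)\neq 0$, where $h_\alpha=[X_\alpha,X_{-\alpha}]$ --- does work for some constituents (for instance $V_\fo\boxtimes V_\fsp$, and $\Lambda^{\frac{\infty}{2}}_AV$ over $\bsp(\infty)$, where one picks $i\in A$, $j\notin A$ and $\alpha=\varepsilon_i+\varepsilon_j$). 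But the exclusion step, which you yourself single out as the main obstacle, does not close in two families of cases, and these are precisely where the paper needs tools absent from your plan.

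First, whenever one factor of $\g_{\bar 0}$ is finite dimensional ($\fosp_B(m|\infty)$, $\fosp_D(m|\infty)$, $\fosp_B(\infty|2k)$, $\fosp_D(\infty|2k)$, and above all $\fosp_C(2|\infty)$), the marks on that factor are not confined to $\{0,\pm 1\}$ or $\{\pm\tfrac{1}{2}\}$: $T(i)$ may be any finite-dimensional simple $\fsp(2k)$-module, $S(i)$ any finite-dimensional simple $\fo(m)$-module, and for $\fosp_C(2|\infty)$ the $\delta_1$-mark is an arbitrary scalar $c$. Your overflow argument has nothing to push against there. The paper instead invokes Kac's finite-dimensionality criterion --- to eliminate spinor constituents of $\fosp(\infty|2k)$ (the condition $\lambda_k\geq n$ fails as $n\to\infty$) and to prove $X_{\delta_1-\varepsilon_1}v\neq 0$ for $\fosp(m|\infty)$ --- and, for $\fosp_C(2|\infty)$, Kac modules and atypicality to force $c\in\Z$ and then $c=1$. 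Second, over $\bsp(\infty)$ the constituents $S^\infty_AV$ and $S^\mu V$ with $\mu$ having more than one box survive your mechanism: for every weight $\mu$ of such a constituent and every odd root $\alpha$, at least one of $\mu\pm\alpha$ lies in the support of another constituent of allowed type (all marks stay nonnegative), so the identity $h_\alpha=[X_\alpha,X_{-\alpha}]$ merely shows that the odd action shuttles you between allowed constituents and never produces an out-of-range weight; the analogous failure occurs for spinor constituents of $\fosp(\infty|2k)$. The paper's fix is to replace an arbitrary weight vector by a $\fb(\prec_n)$-\emph{singular} vector for a Borel subalgebra adapted to the constituent (resp., for $\bsp(\infty)$, by the image of a maximal monomial in odd root vectors, whose existence itself uses boundedness), so that one of the two vanishings $X_{\pm\alpha}v=0$ comes from the singular-vector property rather than from support. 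This choice of adapted Borel subalgebra, together with the integrability input via Kac's criterion, is the idea missing from your proposal; the converse direction and the local simplicity of $\bV$ are fine as you state them.
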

\begin{proof}
Throughout this proof $\prec$ denotes the linear order 
	\[
-1\prec 1\prec -2\prec 2\prec \cdots
	\]
on $\Z^\times $, and $A$ will be a subset of $\Z_{>0}$. The general idea is to base the proof on Lemma~\ref{lem:ss.over.Lie}, and we consider several cases in order to deal more effectively with the technical details. Since $M$ is nontrivial, Remark~\ref{rem:supp.arguments} implies that in each case below we can assume that there is at least one $S(i)$ or $T(i)$ that is not isomorphic to the trivial module $\C$.

\underline{Case $\g = \fosp_B(\infty|\infty), \fosp_D(\infty|\infty)$}. Recall that $\g_{\bar 0} \cong \fo(\infty)\oplus \fsp(\infty)$, where $\fo(\infty) = \fo_B(\infty)$ or $\fo(\infty) = \fo_D(\infty)$, respectively. Assume first that, for some $i$, there is an isomorphism of (non-graded) $\g_{\bar 0}$-modules $M(i)\cong V_\fo \boxtimes N$ for a simple bounded integrable weight $\fsp(\infty)$-module $N$. By  \cite[Proposition~5.7]{GP20} we have either $N \cong V_\fsp$ or $N\cong \C$. Suppose $N \cong V_\fsp$. Then $M(i)\cong L_{\fb(\prec,\tau)_0}(\delta_1 + \varepsilon_1)$. Moreover, support arguments imply that a $\fb(\prec,\tau)_0$-highest weight vector $v$ is also a $\fb(\prec,\tau)$-highest weight vector (see Remark~\ref{rem:supp.arguments}). In particular, $X_{\delta_2 + \varepsilon_1} v=0$.  But support arguments show also that $X_{-\delta_2 - \varepsilon_1} v=0$, and hence we get a contradiction:
	\[
0=h_{\delta_2+\varepsilon_1}v = -v.
	\]
Thus $N\cong \C$, and consequently
	\[
|M|\cong \bL_{\fb(\prec,\tau)}(\delta_1)\cong \bV.
	\]

Assume now there is an isomorphism of $\g_{\bar 0}$-modules $M(i)\cong \cS_A^B\boxtimes N$. We claim that this is not possible. Indeed, we know that $N\cong \C$ or $N\cong V_\fsp$. Suppose $N \cong V_\fsp$, and define $\sigma: \Z^\times \to \{\pm 1\}$ by setting $\sigma(j)=1$ for $j\in \Z_{>0}$, $\sigma(j)=1$ for $j\in -A$, and $\sigma(j)=-1$ otherwise. In particular, we have $M(i)\cong L_{\fb(\prec,\sigma)_0}(\omega_A + \varepsilon_1)$, and a $\fb(\prec,\sigma)_0$-highest weight vector $v$ of $M(i)$ is also a $\fb(\prec,\sigma)$-highest weight vector of $M$. Then $X_{-\delta_j - \varepsilon_1} v=0$ for any $j\notin -A$.  On the other hand, support arguments (see Remark~\ref{rem:supp.arguments}) show that $X_{\delta_j + \varepsilon_1} v=0$, and hence we get a contradiction:
	\[
0=h_{-\delta_j-\varepsilon_1}v = -v.
	\]

\underline{Case $\g = \fosp_B(m|\infty), \fosp_D(m|\infty)$}. We have $\g_{\bar 0} \cong \fo(m)\oplus \fsp(\infty)$. Assume there is an isomorphism of $\g_{\bar 0}$-modules $M(i)\cong L_{\fb(<_m, \tau)}(\lambda)\boxtimes V_\fsp$ for some weight $\lambda\in \fh(m)^*$. We claim that $\lambda=0$. Indeed, our assumption implies that $M(i)$ is a $\fb(\prec,\tau)_0$-highest weight module.  Moreover, if $v\in M(i)$ is a $\fb(\prec,\tau)_0$-highest weight vector, then support arguments (see Remark~\ref{rem:supp.arguments}) show that $X_{\delta_j + \varepsilon_2}v=0$ and $X_{-\delta_j - \varepsilon_2}v=0$ for all $j$. Thus
	\[
0=h_{\delta_j + \varepsilon_2}v=\lambda_j v,
	\]
which implies $\lambda_j=0$, and hence $\lambda=0$. 

Next we claim that $w:=X_{\delta_1 - \varepsilon_1}v\neq 0$. Indeed, support arguments imply that $X_{\varepsilon_j - \delta_{j+1}}v=0$ for $1\leq j\leq m-1$, $X_{\delta_j - \varepsilon_j}v=0$ for $2\leq j\leq m$, and $X_{\varepsilon_j - \varepsilon_{j+1}}v=0$ for $j\geq m$. Thus $w =0$ yields 
	\[
|M|\cong \bL_{\fb(\prec, \tau)}(\varepsilon_1)\cong \bL_{\fb(<,\tau)}(\varepsilon_1) \cong \varinjlim \bL_{\fb(<_n,\tau)}(\varepsilon_1).
	\]
But, by \cite[Proposition~2.3]{Kac78}, the modules $\bL_{\fb(<_n,\tau)}(\varepsilon_1)$ are not finite dimensional, and since they are simple, this is a contradiction. Thus $w\neq 0$.

Now we notice that $X_{\delta_1 - \varepsilon_1} w=0$, and again using support arguments we conclude that $X_{\varepsilon_j - \delta_{j+1}}w=0$ for $1\leq j\leq m-1$, $X_{\delta_j - \varepsilon_j}w=0$ for $2\leq j\leq m$, and $X_{\varepsilon_j - \varepsilon_{j+1}}w=0$ for $j\geq m$. In particular, $\fn(\prec,\tau) w =0$, and since the weight of $w$ is $\delta_1$ we have an isomorphism $|M| \cong \bL_{\fb(\prec, \tau)}(\delta_1)\cong \bV$ as  desired.

\underline{Case $\g = \fosp_B(\infty|2k), \fosp_D(\infty|2k)$}. Recall that $\g_{\bar 0} \cong \fo(\infty)\oplus \fsp(2k)$ where $\fo(\infty) = \fo_B(\infty)$ or $\fo(\infty) = \fo_D(\infty)$, respectively. Assume first that there exists an isomorphism of $\g_{\bar 0}$-modules  $M(i)\cong V_\fo \boxtimes N$ for some simple finite-dimensional weight $\fsp(\infty)$-module $N$. We will show that, also in this case, $|M|$ is isomorphic to $\bV$. Indeed, we have $M(i)\cong L_{\fb(\prec,\tau)_0}(\delta_1 + \sum \lambda_i\varepsilon_i)$ for some partition $\lambda = (\lambda_1\geq \cdots\geq \lambda_k)$, and support arguments imply that a $\fb(\prec,\tau)_0$-highest weight vector $v$ of $M(i)$ is also a $\fb(\prec,\tau)$-highest weight vector of $M$. Then $X_{\delta_2 + \varepsilon_1} v=0$,  and again using support arguments we obtain $X_{-\delta_2 - \varepsilon_1} v=0$. Hence 
	\[
0=h_{\delta_2+\varepsilon_1}v = -\lambda_1 v,
	\]
which implies $\lambda = 0$. Consequently, $|M|\cong \bL_{\fb(\prec,\tau)} (\delta_1)\cong \bV$.

Assume now there is an isomorphism of $\g_{\bar 0}$-modules $M(i)\cong \cS_A^B\boxtimes N$ for some simple finite-dimensional weight $\fsp(\infty)$-module $N$. We claim that this cannot happen. Recall the map $\sigma$ and the weight $\omega_A$ from case 1 above. Then we have an isomorphism of $\g_{\bar 0}$-modules $M(i)\cong L_{\fb(\prec,\sigma)_0}(\omega_A + \sum \lambda_i\varepsilon_i)$ for some partition $\lambda$. Support arguments imply that a $\fb(\prec,\sigma)_0$-highest weight vector $v$ of $M(i)$ is also a $\fb(\prec,\sigma)$-highest weight vector of $M$. Hence
	\[
|M|\cong\varinjlim \bL_{\fb(<_n,\tau)}(\nu(n) + \sum \lambda_i\varepsilon_i),
	\]
where $\nu(n)$ is a half-integer weight for every $n$. In particular, $\bL_{\fb(<_n,\tau)}(\nu(n) + \sum \lambda_i\varepsilon_i)$ is a $\g(n)$-submodule of $|M|$ for any $n$ larger than the length of the partition $\lambda$. But a necessary condition for $\bL_{\fb(<_n,\tau)}(\nu(n) + \sum \lambda_i\varepsilon_i)$ to be finite dimensional is $\lambda_k\geq n$ (see \cite[Proposition~2.3]{Kac78}). Since $\lambda$ is a finite partition and $n\to \infty$, this yields a contradiction as desired.

\underline{Case $\g = \fosp_B(2|\infty)$.} Recall that $\g_{\bar 0} \cong \C \oplus \fsp(\infty)$. Suppose that for some $i$ there is an isomorphism of $\g_{\bar 0}$-modules $M(i)\cong \C_{c\delta_1}\boxtimes V_\fsp$, where $\C_{c\delta_1}$ is a $1$-dimensional $\C$-module of weight $c\delta_1$. In other words, we have $M(i)\cong L_{\fb(<,\tau)_0}(c\delta_1 + \varepsilon_1)$. Let $v$ be a $\fb(<,\tau)_0$-highest weight vector of $M(i)$. Then $X_{\delta_1 - \varepsilon_1}v = 0$ or $X_{\delta_1 - \varepsilon_1}v = w\neq 0$. In the former case, $v$ is a $\fb(<,\tau)$-highest weight vector of $M$, and $M\cong \bL_{\fb(<,\tau)}(c\delta_1 + \varepsilon_1)$. In the latter case, $w$ a $\fb(<,\tau)$-highest weight vector of $M$, and $|M|\cong \bL_{\fb(<,\tau)}((c+1)\delta_1)$. 

Let's prove that an isomorphism $|M|\cong \bL_{\fb(<,\tau)}(c\delta_1 + \varepsilon_1)$ is contradictory. Our argument relies on some material reviewed in Section~\ref{sec:Kac-exts} below. Consider the Kac module $K(c\delta_1 + \varepsilon_1)$ and notice that there is a canonical surjective homomorphism $K(c\delta_1 + \varepsilon_1)\to \bL_{\fb(<,\tau)}(c\delta_1 + \varepsilon_1)$ which is an isomorphism whenever $\bL_{\fb(<,\tau)}(c\delta_1 + \varepsilon_1)$ is typical. Since $K(c\delta_1 + \varepsilon_1)$ is not a bounded $\g$-module (in fact, this module does not have finite-dimensional weight spaces), we obtain that $\bL_{\fb(<,\tau)}(c\delta_1 + \varepsilon_1)$ has to be atypical. This means that $c\in \{-1,1,2,\ldots \}$. Then $X_{-\delta + \varepsilon_2}v\neq 0$, since otherwise 
	\[
0=h_{\delta - \varepsilon_2}v=-cv,
	\]
which is a contradiction. Thus $(c|1,1,0\ldots)$ is a weight of $\bL_{\fb(<,\tau)}(c\delta_1 + \varepsilon_1)$, and support arguments imply that $\bL_{\fb(<,\tau)}(c\delta_1 + \varepsilon_1)$ is not bounded. 

Next we consider the case where $|M|\cong \bL_{\fb(<,\tau)}(c\delta_1)$. Again, since the nontrivial $\g$-module $\bL_{\fb(<,\tau)}(c\delta_1)$ must be atypical, we have $c \in \Z_{\geq 1}$. We claim that $c=1$. Indeed, if $c\in \Z_{\geq 2}$ then $w=X_{-\delta - \varepsilon_1}v\neq 0$, since $h_{\delta + \varepsilon_1}v=cv\neq 0$. If $X_{\delta + \varepsilon_2}w \neq 0$, then $(2|-1,1,0,\ldots)$ is a weight of $\bL_{\fb(<,\tau)}(c\delta_1)$, and support arguments show that $\bL_{\fb(<,\tau)}(c\delta_1)$ is not bounded. If $X_{\delta + \varepsilon_2}w = 0$, then $X_{-\delta - \varepsilon_2}w \neq 0$ (since $h_{\delta + \varepsilon_2}w = w\neq 0$) and $(0|-1,-1,0,\ldots)$ is a weight of $\bL_{\fb(<,\tau)}(c\delta_1)$. Again, if this is so, support arguments imply that $\bL_{\fb(<,\tau)}(c\delta_1)$ is not bounded. Therefore, $c=1$ and $|M| \cong \bL_{\fb(<,\tau)}(\delta_1)\cong \bV$.

\underline{Case $\g = \bsp(\infty)$.} Recall that $\g_{\bar 0}\cong \fsl(\infty)$.  Suppose first that, for some $i$, there is an isomorphism of $\g_{\bar 0}$-modules $M(i)\cong S^\mu V$ for a partition $\mu = (\mu_1\geq \cdots\geq \mu_k)$. Let $v_0 \in S^\mu V$ be a $\fb(<)_{\bar 0}$-highest weight vector of $M(i)$, and let $u\in \bU(\g_1)$ be a longest monomial of the form $\cdots X_{\varepsilon_2+\varepsilon_3}^{t_4}X_{2\varepsilon_2}^{t_3}X_{\varepsilon_1+\varepsilon_2}^{t_2}X_{2\varepsilon_1}^{t_1}$ with $t_i\in \{0,1\}$ such that $uv_0\neq 0$. Such a monomial exists since the vectors of the form $uv_0$ lie in $\fsl(\infty)$-submodules of $M$ isomorphic (up to parity) to $S^\nu V$ for certain partitions $\nu$, where the length of $\nu$ grows along with the length of the monomial. Thus, the non-existence of a monomial $u$ of maximal length with $uv_0=0$ would imply that $M$ is not bounded. Notice that $uv_0$ is a $\fb(<)$-highest weight vector of $M$, and hence we have an isomorphism of $\g$-modules $|M|\cong \bL_{\fb(<)}(\sum_{j=1}^\ell \gamma_i\varepsilon_j)$ for some $\gamma \in \C^\infty$ such that $\gamma_1\geq \gamma_2\geq \ldots \geq \gamma_\ell$.

We claim that $\gamma_j=0$ for all $j\geq 2$. Indeed, let $j\gg 0$ such that $\gamma_j=0$. Then, since $v$ is $\fb(<)$-highest weight, we have $X_{\varepsilon_2 + \varepsilon_j}v=0$. On the other hand, support arguments show that $X_{-\varepsilon_2 - \varepsilon_j}v=0$. Thus
	\[
0 = h_{\varepsilon_2 + \varepsilon_j}v = \gamma_2 v,
	\]
which implies $\gamma_j=0$ for all $j\geq 2$. If $j=1$, then similarly we have $X_{\varepsilon_1 + \varepsilon_2}v=0$. But now $X_{-\varepsilon_1 - \varepsilon_2}v=0$ if and only if $\gamma_1\neq 1$. In other words, we have an isomorphism $|M|\cong \bL_{\fb(<)} (\varepsilon_1)\cong \bV$. 

If $M(i)\cong S^\mu V_*$, then we prove in a similar way an isomorphism $|M|\cong \bL_{\fb(>)} (-\varepsilon_1)\cong \bV$.

Next we assume that there is an isomorphism of $\g_{\bar 0}$-modules $M(i)\cong \Lambda^{\frac{\infty}{2}}_A V$ for some $i$. Let $\prec$ be a linear order on $\Z_{>0}$ satisfying the following conditions: $A\prec (\Z_{>0}\setminus A)$, and for any $i,j\in A$ (respectively, $i,j\in \Z_{>0}\setminus A$) we have $|\{p\in A \mid i\prec p\prec j\}|<\infty$ (respectively, $|\{p\in \Z_{>0}\setminus A \mid i\prec p\prec j\}|<\infty$). Therefore we can write $\Z_{>0} = \{ j_{n_1}\prec j_{n_2}\prec \cdots \prec j_{N_2}\prec j_{N_1} \}$, where $A = \{ j_{n_1}\prec j_{n_2}\prec \cdots \}$ and $\Z_{>0}\setminus A = \{\cdots \prec j_{N_2}\prec j_{N_1}\}$. Let $\tau:\Z_{>0}\to \{1\}$, and let $v\in \Lambda^{\frac{\infty}{2}}_A V$ be a $\fb(\prec, \tau)_0$-highest weight vector. In particular, the weight of $v$ is $\varepsilon_A:=\sum_{j\in A}\varepsilon_j$. Since $X_{2\varepsilon_{i_{n_1}}}$ is a $\fb(\prec, \tau)_0$-highest weight vector of $\g_1$, we must have $w=X_{2\varepsilon_{i_{n_1}}}v=0$, as otherwise $w$ would be a $\fb(\prec, \tau)_0$-singular vector of $M$ of weight  $3\varepsilon_{i_{n_1}} +\varepsilon_{A\setminus \{i_{n_1}\}}$, which is a contradiction, as $\fsl(\infty)$ does not admit any simple bounded highest weight module with such a weight. Similarly, we must also have $X_{-\varepsilon_{i_{N_1}} -\varepsilon_{i_{N_2}}}v=0$. 

Take now $n\gg 0$ so that $j_{n_1}, j_{N_1}\in [1,n]$. Using that $X_{2\varepsilon_{j_{n_1}}}v=0$, and that $\fsl(\infty)$ does not admit a simple bounded integrable highest weight module with highest weight $2\varepsilon_{j_{n_1}} + 2\varepsilon_{j_{n_2}} + \varepsilon_{A\setminus \{j_{n_1}, j_{n_2}\}}$, we obtain that $X_{\varepsilon_{j_{n_1}}+\varepsilon_{j_{n_2}}}v=0$. Continuing this way, one shows that
	\[
X_{2\varepsilon_{j}}v=X_{\varepsilon_{j_t}+\varepsilon_{j_{t+1}}}v=0, \text{ for every } j,t\in [1,n].
	\]
On the other hand, for $j_m, j_{m+1}\in [1,n]$ such that $j_m\in A$ and $j_{m+1}\notin A$, we can use support arguments to obtain that $X_{-\varepsilon_{j_m} - \varepsilon_{j_{m+1}}}v=0$. Thus we have proved that $X_{\pm (\varepsilon_{j_{m}}+\varepsilon_{j_{m+1}})}v=0$. Since $j_{m+1}\notin A$, this yields the following contradiction
	\[
0 = h_{\varepsilon_{j_{m}}-\varepsilon_{j_{m+1}}} v = -v.
	\]
In conclusion, the isomorphism of $\g_{\bar 0}$-modules $M(i)\cong \Lambda^{\frac{\infty}{2}}_A V$ is contradictory.

Finally, assume that, for some $i$, we have an isomorphism of $\g_{\bar 0}$-modules $M(i)\cong S^\infty_A V$ for an infinite set $A = \{a_1\leq a_2\leq \cdots\}\subseteq \Z_{>0}$. For $n\gg 0$, let $w_n\in M(i)$ denote the equivalence class a $\fb(<_n)_0$-highest weight vector of a $\g(n)_{\bar 0}$-submodule of $M(i)$ isomorphic to $L_{\fb(<_n)_0}(a_n\varepsilon_1 )$. Consider $W = \bU(\g(n))w_n$ and let $w\in W$ be a $\fb(<_n)$-singular weight vector of $W$. In particular, $w$ is a $\fb(<_n)_0$-singular weight vector, and hence, it must have weight of the form $b_n\varepsilon_1$ for some $b_n\geq a_n$. Thus $X_{\varepsilon_1 + \varepsilon_2}w = 0$, and support arguments imply $X_{-\varepsilon_1 - \varepsilon_2}w = 0$. But this yields a contradiction
	\[
0=h_{\varepsilon_1 - \varepsilon_2}w = b_n w.
	\]

A similar argument shows that an isomorphism of $\g_{\bar 0}$-modules $M(i)\cong S^\infty_A V_*$ is also contradictory.
\end{proof}

\section{The category $\cB^{\Int}$}\label{sec:blocks}
Let $\cB^{\Int}$ denote the full subcategory of $\g$-mod whose objects are integrable bounded weight $\g$-modules. 

\subsection{The case $\g\ncong \fsl(\infty|1)$}
\begin{theo}\label{thm:B.C.D.sp.q-ss}
Let $\g$ equal $\fsl(\infty | m)$ with $m\in \{\Z_{>1}, \infty\}$, $\fosp_B(\infty|\infty)$, $\fosp_B(\infty|2k)$, $\fosp_B(m|\infty)$, $\fosp_C(2|\infty)$, $\fosp_D(\infty|\infty)$, $\fosp_D(\infty|2k)$, $\fosp_D(m|\infty)$, or $\bsp(\infty)$. Then the category $\cB^{\Int}$ is semisimple.
\end{theo}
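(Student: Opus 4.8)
The plan is to deduce semisimplicity of $\cB^{\Int}$ from the vanishing of $\Ext^1$ between its simple objects. First I would reduce to this vanishing. Simple objects of $\cB^{\Int}$ are cyclic and have finite-dimensional weight spaces, so $\Ext^1(L,-)$ commutes with coproducts; granting that every nonzero object of $\cB^{\Int}$ has nonzero socle, a standard argument then shows that if $\Ext^1_{\cB^{\Int}}(L,L')=0$ for all simple $L,L'$, the socle of an arbitrary object $M$ is exhaustive and $M$ is semisimple. Indeed, if $\soc M\neq M$, pick a simple submodule $\bar L$ of $M/\soc M$ and pull back to $0\to\soc M\to M'\to\bar L\to 0$; writing $\soc M=\bigoplus_i L_i$, the identity $\Ext^1(\bar L,\soc M)=\bigoplus_i\Ext^1(\bar L,L_i)=0$ forces this sequence to split and produces a simple submodule outside $\soc M$, a contradiction. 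The existence of simple submodules rests on the finite-dimensionality and boundedness of the weight spaces.

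Next I would feed in the classification. For $\g=\fsl(\infty|m)$ with $m>1$, Theorem~\ref{thm:classification.sl.case}(b) and Corollary~\ref{cor:classification.sl.case} identify the simple objects as $S^\mu\bV$, $S^\mu\bV_*$, their parity shifts, and $S_\cA^\infty\bV$, $S_\cA^\infty\bV_*$, $\Lambda_\cA^\infty\bV$, $\Lambda_\cA^\infty\bV_*$; the exceptional modules $\bL_{\fb}(0^{(\infty)}|a)$ with $a\in\C\setminus\Z$ occur only when $m=1$, which is precisely why $\fsl(\infty|1)$ is excluded. For the $\fosp$ and $\bsp(\infty)$ cases, Theorem~\ref{thm:classification.BCDp} shows the only simple objects are $\bV$, $\Pi\bV$, $\C$ and $\Pi\C$. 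In every case each simple $L$ is locally simple, $L\cong\varinjlim L_n$ with $L_n=\bU(\g(n))v$ a simple finite-dimensional $\g(n)$-module whose $\fb(<_n)$-highest weight is one of those described in Lemma~\ref{lem:possible.h.w}.

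The heart of the proof is to reduce $\Ext^1_{\cB^{\Int}}(L,L')$ to the finite level. Given an extension $0\to L'\to M\to L\to 0$ in $\cB^{\Int}$, restricting to $\g(n)$ and taking the submodule generated by a lift of a generator of $L_n$ yields an extension of $L_n$ by $L'_n$; the splitting criterion for extensions of locally simple modules established in the Appendix then guarantees that $M$ splits once $\Ext^1_{\g(n),\fh(n)}(L_n,L'_n)=0$ for all $n\gg 0$. For type $A$ this finite-level vanishing is exactly Lemma~\ref{lem:not.hw.trivial.ext}: the hypothesis $m>1$ yields $x_n>1$, and combined with the restrictions on highest weights from Lemma~\ref{lem:possible.h.w} it covers every pair $(L_n,L'_n)$ that can arise. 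For the $\fosp$ and $\bsp(\infty)$ cases only the pairs among $\bV_n$ and $\C$ occur, and $\Ext^1_{\g(n),\fh(n)}(\bV_n,\bV_n)=\Ext^1_{\g(n),\fh(n)}(\bV_n,\C)=0$ for $n\gg 0$ follows from support arguments together with a direct computation of the corresponding finite-rank extension groups (the simples involved have distinct central characters, and the natural module admits no nonsplit self-extension in the weight category).

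I expect the main obstacle to lie in the finite-level computation, namely Lemma~\ref{lem:not.hw.trivial.ext} and its combinatorial core: using the weight-diagram description of $\Ext^1$ over $\fgl(n|x_n)$ from \cite{MS11}, one must check that no legal move of weight zero connects the weight diagrams of any two simples occurring in $\cB^{\Int}$, so that no nonsplit self- or cross-extension survives at finite rank. A secondary difficulty is the passage from finite-level splitting to a global splitting, which is exactly what the Appendix's criterion is designed to accomplish; verifying that its hypotheses hold uniformly across the families of simple objects, and confirming the socle-exhaustion step above, completes the argument.
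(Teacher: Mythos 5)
Your overall route is the same as the paper's: classify the simple objects (Theorem~\ref{thm:classification.sl.case} and Corollary~\ref{cor:classification.sl.case} for type $A$, Theorem~\ref{thm:classification.BCDp} for the remaining cases), prove vanishing of $\Ext^1_{\g(n),\fh(n)}$ between the finite-level constituents, and pass to the limit with the Appendix criterion (Corollary~\ref{prop:local.triv.ext=global}). The type-$A$ half of your argument is exactly the paper's (Lemma~\ref{lem:not.hw.trivial.ext} plus the limit criterion), and your explicit socle argument reducing semisimplicity of the category to $\Ext^1$-vanishing between simples is a reasonable way of making precise a reduction the paper leaves implicit.

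There is, however, one step whose justification is wrong as you state it: the finite-level vanishing in the orthosymplectic and $\bsp(\infty)$ cases. You attribute $\Ext^1_{\g(n),\fh(n)}(\bV_n,\C)=0$ to the simples having distinct central characters. For $\fosp(m|2k)$ this is false in general: both $\C$ and $\bV_n$ are atypical, and already for $\fosp(3|2)$ one checks that $\rho$ and $\delta_1+\rho$ are linked by a shift along an isotropic odd root followed by a Weyl group element, so the trivial and natural modules share a central character. For $\bsp(n)$ (a periplectic-type superalgebra) there is no invariant form and no useful Harish--Chandra-type central character to invoke at all. The vanishing is still true, but the argument has to be a direct one: compute the relative cohomology through the Koszul complex $C^i=\Hom_\fh\bigl(\Lambda^i(\g(n)/\fh(n))\otimes S,\,T\bigr)$, where for $T=\Pi\bV_n$ the relevant $\Hom_\fh$-spaces vanish for weight and parity reasons, and for $T=\bV_n$ in type $B$ (where $\bV_n$ has a nonzero zero-weight space, so $C^0\neq 0$) one checks directly that the differential $C^0\to C^1$ accounts for all weight $1$-cocycles. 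The paper asserts this vanishing without proof, so your proposal is not worse off structurally, but the specific reason you give for it would not survive scrutiny and needs to be replaced by the cohomological computation.
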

\begin{proof}
Let $\g = \fsl(\infty | m)$ with $m\in \{\Z_{>1}, \infty\}$ and let $M$ and $N$ be two simple objects in $\cB^{\Int}$. By Theorem~\ref{thm:classification.sl.case}, $M \cong \varinjlim M_n$ and $N \cong \varinjlim N_n$ are locally simple. Since $M$ and $N$ are isomorphic to modules appearing in {\rm \hyperref[sym.type.mod]{($\Omega_1'$)}-\hyperref[partition.type.mod.dual]{($\Omega_6'$)}}, Lemma~\ref{lem:not.hw.trivial.ext} implies that $\Ext_{\g(n), \fh(n)}^1(M_n,N_n)=0$ for $n\gg 0$. Now the claim follows from Corollary~\ref{prop:local.triv.ext=global}.

If $\g\ncong \fsl(\infty|m)$, the result follows from Theorem~\ref{thm:classification.BCDp} and Corollary~\ref{prop:local.triv.ext=global} by noting that all $\Ext$s between the modules $\bV_n$, $\Pi \bV_n$, $\C$ or $\Pi\C$ vanish for all $n$.
\end{proof}

\begin{theo}\label{thm:B.C.D.sp.q-ss}
	If $\g = \fq(\infty)$ and $M$ and $N$ are two non-isomorphic objects of $\cB^{\Int}$, then $\Ext_{\g, \fh}^1 (M, N) = 0$ and \[ \Ext_{\g, \fh}^1 (M, M) = \begin{cases}
	0 \text{ if }M\not\cong\Pi M  \\
	\mathbb{C} \text{ if }M\cong\Pi M
	\end{cases}. \]
\end{theo}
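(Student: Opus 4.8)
The plan is to follow the reduction scheme used for the other $\g$ in this section, but with one extra ingredient coming from the odd part of the Cartan subalgebra, which is what produces the single non-split self-extension. By Theorem~\ref{thm:classification.q} every simple object of $\cB^{\Int}$ is, up to applying $\Pi$, one of $S^\gamma\bV=\bL_{\fb(<)}(\sum_i\gamma_i\varepsilon_i)$ or $S^\gamma\bV_*=\bL_{\fb(>)}(\sum_i-\gamma_i\varepsilon_i)$ for a strict partition $\gamma=(\gamma_1>\cdots>\gamma_k)$, and each is locally simple, say $M\cong\varinjlim M_n$ with $M_n$ a simple $\fq(n)$-module whose $\fb(<_n)$- (or $\fb(>_n)$-) highest weight is the truncation of $\gamma$. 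I would first fix such presentations $M\cong\varinjlim M_n$ and $N\cong\varinjlim N_n$, and record from Theorem~\ref{thm:classification.q} that $M\cong\Pi M$ exactly when $k$ is odd. Since the objects in the statement are simple, the computation splits into the case $M\not\cong N$ and the case $M\cong N$.

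For $M\not\cong N$ I claim $\Ext^1_{\g,\fh}(M,N)=0$. The key point is that for $n\gg 0$ the $\fq(n)$-modules $M_n$ and $N_n$ are not associates and do not lie in the same block: their highest weights are the truncations of the distinct data $(\gamma,\bV)$ and $(\gamma',\bV_*)$, and for large $n$ these cannot be connected by the combinatorics governing $\Ext^1$ between simple $\fq(n)$-modules (an analogue of the weight-diagram calculus of \cite{MS11} together with the typicality information from \cite{Pen86}); in the mixed case, support considerations already separate $S^\gamma\bV$ from $S^{\gamma'}\bV_*$. This includes the subcase $N\cong\Pi M$ with $k$ even, where $\Pi M_n\not\cong M_n$. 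Hence $\Ext^1_{\g(n),\fh(n)}(M_n,N_n)=0$ for $n\gg 0$, and Corollary~\ref{prop:local.triv.ext=global} upgrades this to $\Ext^1_{\g,\fh}(M,N)=0$.

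For $M\cong N$ I would compute directly at the level of $\g$, using the relative description $\Ext^1_{\g,\fh}(M,M)\cong H^1(\g,\fh_{\bar 0};\End_\C M)$ of extensions in the category of weight modules (as set up in the Appendix), together with the $\fh_{\bar 0}$-weight decomposition $\g/\fh_{\bar 0}=\fh_{\bar 1}\oplus\bigoplus_{\alpha\in\D}\g^\alpha$. A $1$-cochain is $\fh_{\bar 0}$-equivariant, so it decomposes into a part living on the root spaces $\g^\alpha$ and a part $c\colon\fh_{\bar 1}\to\End_\C(M)_{\bar 1}$ of $\fh_{\bar 0}$-weight $0$. The root-space part contributes nothing to $H^1$: restricting to finite level and again invoking the combinatorics of \cite{MS11}, \cite{Pen86} (no self-extension of a simple module inside its block) together with Corollary~\ref{prop:local.triv.ext=global} shows its cohomology vanishes, exactly as in the proof of the previous semisimplicity theorem. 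The remaining $\fh_{\bar 1}$-part is the new phenomenon: since $\fh_{\bar 1}$ is odd and $[\fh_{\bar 1},\fh_{\bar 1}]\subseteq\fh_{\bar 0}$, the differential forces a surviving cocycle to take values in the $\g$-invariant odd endomorphisms $\End_\g(M)_{\bar 1}$. By \cite{Pen86} this space is $\C\phi$, spanned by the odd automorphism $\phi$ of $M$, when $M\cong\Pi M$ (i.e. $k$ odd), and is $0$ otherwise. Thus $\Ext^1_{\g,\fh}(M,M)\cong\End_\g(M)_{\bar 1}$ equals $\C$ precisely for $M\cong\Pi M$ and vanishes otherwise; the nontrivial class is realized by the $\g$-module structure on $M\oplus\Pi M$ in which $\fh_{\bar 1}$ acts off-diagonally through $\phi$.

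I expect the main obstacle to be this last computation, namely identifying the $\fh_{\bar 1}$-contribution to $H^1(\g,\fh_{\bar 0};\End_\C M)$ with $\End_\g(M)_{\bar 1}$. The delicate point is that the relative differential couples the $\fh_{\bar 1}$-component of a cochain to its values on the root spaces $\g^\alpha$ (through the brackets $[\fh_{\bar 1},\g^\alpha]$), so one must show that after imposing the cocycle condition and quotienting by coboundaries exactly the $\g$-equivariant odd endomorphisms remain, with no extra classes and no collapse. I anticipate this requiring careful bookkeeping of the $\fh_{\bar 0}$-weight-$0$ components combined with the fact that $\phi^2$ is a nonzero scalar, and I would double-check consistency by verifying that the explicit extension on $M\oplus\Pi M$ built from $\phi$ is non-split and that any two such extensions differ by a coboundary.
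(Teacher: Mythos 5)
Your treatment of the case $M\not\cong N$ follows the same reduction as the paper: show $\Ext_{\g(n),\fh(n)}^1(M_n,N_n)=0$ for $n\gg 0$ and invoke Corollary~\ref{prop:local.triv.ext=global}. The difference is in how the finite-level vanishing is justified. The paper separates $\bU(\fq(n))v_M$ and $\bU(\fq(n))v_N$ by their central characters, using A.~Sergeev's description of the center of $\bU(\fq(n))$ \cite{Ser83}; you instead appeal to ``an analogue of the weight-diagram calculus of \cite{MS11}'', but \cite{MS11} concerns $\fgl(m|n)$ and no such calculus for $\fq(n)$ is set up anywhere in this paper, so as written this step rests on an unavailable reference. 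The idea is salvageable --- central characters are exactly the right invariant here --- but you should say that explicitly rather than gesture at a $\fq(n)$-version of \cite{MS11}.

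The genuine gap is in the self-extension computation. The paper does not attempt this calculation: it cites \cite{GS20}, where $\Ext_{\fq(n),\fh(n)}^1(M_n,M_n)$ is computed and the resulting extensions are shown to be compatible with the inclusions $\fq(n)\hookrightarrow\fq(n+1)$, which is what makes the inverse limit coming from Proposition~\ref{prop:ext.homology} equal to $\C$. You propose instead to compute $H^1(\g,\fh;\End_\C M)$ directly by splitting a $1$-cochain into its restriction to $\bigoplus_\alpha\g^\alpha$ and its restriction to $\fh_{\bar 1}$, killing the first component via Corollary~\ref{prop:local.triv.ext=global} and identifying the second with $\End_\g(M)_{\bar 1}$. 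This decomposition is not compatible with the relative differential: the cocycle and coboundary conditions mix the two components through the brackets $[\fh_{\bar 1},\g^\alpha]\subseteq\g^\alpha$, so neither piece is a subcomplex, and Corollary~\ref{prop:local.triv.ext=global} cannot be applied to the ``root-space part'' in isolation --- that corollary is a statement about the full relative (co)homology, and its hypothesis $\Ext_{\g(n),\fh(n)}^1(M_n,M_n)=0$ is in fact false here when $M_n\cong\Pi M_n$. You acknowledge this coupling yourself but leave it unresolved, and the asserted isomorphism $\Ext_{\g,\fh}^1(M,M)\cong\End_\g(M)_{\bar 1}$ is never established. The explicit non-split extension on $M\oplus\Pi M$ built from the odd automorphism $\phi$ is correct and gives the lower bound $\dim\Ext_{\g,\fh}^1(M,M)\geq 1$ when $M\cong\Pi M$, but the upper bound --- that no further classes survive --- is exactly the content you would need to prove, and it is the part the paper delegates to \cite{GS20}.
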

\begin{proof}
	Recall from Theorem~\ref{thm:classification.q} that any integrable bounded simple weight $\g$-module is isomorphic to
	\[
	\bL_{\fb(<)} (\sum_{i=1}^k \gamma_i\varepsilon_i)\cong \varinjlim \bL_{\fb(<_n)} (\sum_{i=1}^k \gamma_i\varepsilon_i) \text{ or } \bL_{\fb(>)} (\sum_{i=1}^k -\gamma_i\varepsilon_i)\cong \varinjlim \bL_{\fb(>_n)} (\sum_{i=1}^k -\gamma_i\varepsilon_i)
	\]
	for some partition $\gamma = (\gamma_1> \gamma_2> \cdots > \gamma_k)$. Let $v_M$ and $v_N$ be the respective highest weight vectors of $M$ and $N$. Then the $\fq(n)$-modules $\bU(\fq(n))v_M$ and $\bU(\fq(n))v_N$ for $n\gg 0$ have different central characters. This follows from A. Sergeev's description \cite{Ser83} of the center of $\bU(\fq(n))$. Corollary~\ref{prop:local.triv.ext=global} in the Appendix implies now $\Ext_{\g, \fh}^1 (M, N) = 0$.
	
Our statement about $\Ext_{\g, \fh}^1 (M, M)$ follows directly from \cite{GS20}. There the authors consider the case of $\fq(n)$ but present an argument that extensions over $\fq(n)$ extend to $\fq(n+1)$, i.e., in fact prove that 
	\[
\Ext_{\g, \fh}^1 (M, M) = \begin{cases}
0 \text{ if }M\not\cong\Pi M  \\
\mathbb{C} \text{ if }M\cong\Pi M
\end{cases}. \qedhere
	\]
\end{proof}


\subsection{Kac modules and the case $\g=\fsl(\infty|1)$}\label{sec:Kac-exts}
We start by recalling the definition of Kac module. Assume $\g$ equals $\fsl(\infty|m)$ for $m\in \Z_{\geq 1}\cup\{\infty\}$, or $\fosp_C(2|\infty)$. Put $\g_+ := \bigoplus_{i\geq 0} \g_i$ and $\g_{\gtrless} := \bigoplus_{i\gtrless 0} \g_i$, where $\g_i$ is defined in Section~\ref{sec:prel}. Let $L$ be a simple weight $\g_{0}$-module. Set $\g_> L=0$. The \emph{Kac module} (cf. \cite{Kac78}) is the induced $\g$-module
	\[
K(L):=\bU(\g)\otimes_{\bU(\g_+)} L.
	\]
The \emph{Kac module} $K_n(L_n)$ for $\g(n)$ is defined similarly. When $L\cong L_{\fb(<)_{\bar 0}}(\lambda)$, the module $K(L)$ is usually denoted by $K(\lambda)$. The module $K(L)$ is indecomposable and admits a unique maximal proper submodule $N(L)$, yielding the short exact sequence
	\[
0\to N(L) \stackrel{f}{\longrightarrow} K(L) \stackrel{g}{\longrightarrow} \bL(L)\to 0
	\]
where $\bL(L):=K(L)/N(L)$. Similarly, $K_n(L_n)$ has a unique maximal proper submodule $N_n(L_n)$, and $\bL_n(L_n) := K_n(L_n)/N_n(L_n)$.

\begin{prop}\label{prop:g_0-emb.g-emb}
Let $\phi_{n,n+1} : L_n\hookrightarrow L_{n+1}$ be an embedding of $\g(n)_0$-modules, and consider the embedding of $\g(n)$-modules $\varphi_{n,n+1}:K_n(L_n)\hookrightarrow K_{n+1}(L_{n+1})$ mapping $u\otimes v$ to $u\otimes \phi_{n,n+1}(v)$ for all $u\in \bU(\g(n))$, $v\in L_{n}$. Then $\varphi_{n,n+1}(N_{n}(L_n))\subseteq N_{n+1}(L_{n+1})$ and $\varphi_{n,n+1}$ induces an embedding of $\g(n)$-modules $\psi_{n,n+1}:\bL_n(L_n)\hookrightarrow \bL_{n+1}(L_{n+1})$.
\end{prop}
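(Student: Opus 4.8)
The plan is to pin down the submodule $N_n(L_n)$ through the canonical surjection $g_n\colon K_n(L_n)\to\bL_n(L_n)$ and to identify $\varphi_{n,n+1}(N_n(L_n))$ with $\ker g_{n+1}=N_{n+1}(L_{n+1})$. Throughout I would use that for the algebras in question $\g_0=\g_{\bar 0}$, so the relevant $\Z$-grading is the three-term grading $\g=\g_{-1}\oplus\g_0\oplus\g_1$ with $\g_{\pm 1}$ odd; consequently $K_n(L_n)\cong\Lambda(\g(n)_{-1})\otimes L_n$ as a $\g(n)_0$-module, it is $\Z_{\le 0}$-graded with top piece $1\otimes L_n$ in degree $0$, and $\g(n)_1$ annihilates this top piece. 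Recall that $N_n(L_n)$ is the unique maximal proper submodule; since $L_n$ is simple, every proper submodule meets the top $1\otimes L_n$ trivially, so $N_n(L_n)$ is exactly the largest graded submodule living in strictly negative degrees. Finally, $\varphi_{n,n+1}$ preserves the grading and restricts to $\phi_{n,n+1}$ on the degree-$0$ parts.

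First I would dispose of the easy inclusion. The composite $g_{n+1}\circ\varphi_{n,n+1}\colon K_n(L_n)\to\bL_{n+1}(L_{n+1})$ is a homomorphism of $\g(n)$-modules, and it is nonzero: it sends $1\otimes\phi_{n,n+1}(v)$ to the image of $1\otimes\phi_{n,n+1}(v)$ in $\bL_{n+1}(L_{n+1})$, which is nonzero because $g_{n+1}$ is injective on the top piece $1\otimes L_{n+1}$ and $\phi_{n,n+1}$ is injective. Hence $\ker(g_{n+1}\circ\varphi_{n,n+1})$ is a proper $\g(n)$-submodule of $K_n(L_n)$, so it is contained in the maximal proper submodule $N_n(L_n)$.

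The heart of the matter is the reverse inclusion $N_n(L_n)\subseteq\ker(g_{n+1}\circ\varphi_{n,n+1})$, i.e. $\varphi_{n,n+1}(N_n(L_n))\subseteq N_{n+1}(L_{n+1})$, which I would obtain from the functoriality of the contravariant form. Equip each Kac module with its canonical contravariant form relative to a Chevalley anti-involution $\sigma_n$ with $\sigma_n(\g(n)_{\pm 1})=\g(n)_{\mp 1}$, chosen compatibly so that $\sigma_{n+1}|_{\g(n)}=\sigma_n$, and normalized by the (nondegenerate) contravariant form on the top $L_n$; its radical is exactly the maximal submodule $N_n(L_n)$. Since $\varphi_{n,n+1}$ preserves the grading and restricts to $\phi_{n,n+1}$ on the top, and since a contravariant form on the simple $\g(n)_0$-module $L_n$ is unique up to scalar, the pullback along $\varphi_{n,n+1}$ of the form of $K_{n+1}(L_{n+1})$ agrees on the top with $\langle\phi_{n,n+1}(\cdot),\phi_{n,n+1}(\cdot)\rangle$, and hence, by uniqueness, equals a scalar $c$ times the form of $K_n(L_n)$. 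Thus $\varphi_{n,n+1}$ is an isometry up to the scalar $c$, and provided $c\neq 0$ it carries the radical $N_n(L_n)$ into the radical $N_{n+1}(L_{n+1})$.

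Granting both inclusions, $\ker(g_{n+1}\circ\varphi_{n,n+1})=N_n(L_n)$, which immediately yields $\varphi_{n,n+1}(N_n(L_n))\subseteq N_{n+1}(L_{n+1})$ and hence a well-defined induced map $\psi_{n,n+1}\colon\bL_n(L_n)\to\bL_{n+1}(L_{n+1})$; its kernel is $\ker(g_{n+1}\circ\varphi_{n,n+1})/N_n(L_n)=0$, so $\psi_{n,n+1}$ is an embedding. The main obstacle is the nonvanishing of the scalar $c$, equivalently that the contravariant form of $L_{n+1}$ restricts to a nonzero (hence nondegenerate) form on the simple submodule $\phi_{n,n+1}(L_n)$, i.e. that $\phi_{n,n+1}(L_n)$ is not isotropic. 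This is where the specific nature of the embeddings used in the applications enters: there $\phi_{n,n+1}$ sends a $\fb(<_n)_0$-highest weight vector to a $\fb(<_{n+1})_0$-highest weight vector, on whose common line the contravariant form is normalized to be nonzero, so that $c\neq 0$ and the argument closes. (Alternatively, one may try to prove directly that the $\g(n)$-submodule $\bU(\g(n))\cdot g_{n+1}(1\otimes\phi_{n,n+1}(L_n))$ of the simple module $\bL_{n+1}(L_{n+1})$ is itself simple, which forces $\ker(g_{n+1}\circ\varphi_{n,n+1})=N_n(L_n)$; the difficulty there is again to exclude a nonzero submodule concentrated in strictly negative degrees, which is the same isotropy issue.)
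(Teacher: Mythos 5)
Your setup (the $\Z_{\le 0}$-grading of $K_n(L_n)=\Lambda(\g(n)_{-1})\otimes L_n$, the characterization of $N_n(L_n)$ as the largest submodule in strictly negative degrees, the existence and uniqueness of the contravariant form with radical $N_n(L_n)$) is correct, and so is the easy inclusion $\ker(g_{n+1}\circ\varphi_{n,n+1})\subseteq N_n(L_n)$. The gap is in the key step: from "$\varphi_{n,n+1}$ is an isometry up to a nonzero scalar" you conclude that it "carries the radical $N_n(L_n)$ into the radical $N_{n+1}(L_{n+1})$," but this is a non sequitur. The isometry property only tells you that $\varphi_{n,n+1}(N_n(L_n))$ is orthogonal to the subspace $\varphi_{n,n+1}(K_n(L_n))$, whereas $N_{n+1}(L_{n+1})$ is the radical of the form on \emph{all} of $K_{n+1}(L_{n+1})$, and $\varphi_{n,n+1}(K_n(L_n))$ is a proper subspace (both because $\phi_{n,n+1}(L_n)\subsetneq L_{n+1}$ and because $\g(n)_{-1}\subsetneq\g(n+1)_{-1}$). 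An isometric embedding of spaces with bilinear forms does not in general map the radical of the small space into the radical of the big one (think of an isotropic line inside a hyperbolic plane). Concretely, $\varphi_{n,n+1}(u)\in N_{n+1}(L_{n+1})$ is equivalent to $\bigl(\bU(\g(n+1)_1)\,\varphi_{n,n+1}(u)\bigr)\cap(1\otimes L_{n+1})=0$, while your pairing computation only controls $\bU(\g(n)_1)\,\varphi_{n,n+1}(u)$ paired against $1\otimes\phi_{n,n+1}(L_n)$. In other words, your argument never engages with the action of the root vectors of $\g(n+1)$ that are not in $\g(n)$ — and that is exactly where the content of the proposition lies. The paper's proof consists precisely of this missing analysis: it shows directly that $\bU(\g(n+1))\varphi_{n,n+1}(N_n(L_n))$ stays inside $\bU(\g(n+1)_{-1})^{+}\otimes L_{n+1}$ (hence is proper, hence lies in $N_{n+1}(L_{n+1})$), by reducing to the new simple root vectors of $\g(n+1)$ and using that these commute with $\g(n)_{-1}$. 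Your closing "alternative" (simplicity of $\bU(\g(n))\cdot g_{n+1}(1\otimes\phi_{n,n+1}(L_n))$) is essentially a restatement of what must be proved, and the obstruction there is not the isotropy of $\phi_{n,n+1}(L_n)$ but again the action of the new root vectors.

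A secondary point: even granting the rest, your nonvanishing of the scalar $c$ is only justified when $\phi_{n,n+1}$ takes a highest weight vector to a highest weight vector, while the proposition is stated for an arbitrary embedding of $\g(n)_0$-modules $\phi_{n,n+1}\colon L_n\hookrightarrow L_{n+1}$; for a general simple $\g(n)_0$-submodule of $L_{n+1}$ the restricted contravariant form can vanish when the relevant isotypic component of $L_{n+1}|_{\g(n)_0}$ has multiplicity at least two. You flag this yourself, but it means the form-theoretic route would need an additional multiplicity-one or normalization argument even after the main gap is repaired.
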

\begin{proof}
Set $N_n = N_n(L_n)$. We claim that $\bU(\g(n+1))\varphi_{n,n+1}(N_n)$ is a proper submodule of $K_{n+1}(L_{n+1})$. Indeed, $N_n\subseteq \bU(\g(n)_{-1})^+\otimes L_n$, where $\bU(\g(n)_{-1})^+$ denotes the augmentation ideal of $\bU(\g(n)_{-1})$, and hence it is clear that $\g(n+1)_{-1}\varphi_{n,n+1}(N_n)\subseteq \bU(\g(n+1)_{-1})^+\otimes L_{n+1}$. Now we show that $\g(n+1)_+\varphi_{n,n+1}(N_n)\subseteq \bU(\g(n+1)_{-1})^+\otimes L_{n+1}$. For this it is enough to prove that $X_\alpha \varphi_{n,n+1}(N_n)\subseteq \bU(\g(n+1)_{-1})^+\otimes L_{n+1}$, where $X_\alpha$ is a simple root vector of $\g(n+1)\setminus \g(n)$. Since $X_\alpha$ commutes with $\g(n)_{-1}$ we obtain  $X_\alpha \varphi_{n,n+1}(N_n) \subseteq X_\alpha \bU(\g(n)_{-1})^+\otimes \phi_{n,n+1}(L_n) \subseteq \bU(\g(n)_{-1})^+\otimes X_\alpha L_{n+1} \subseteq \bU(\g(n)_{-1})^+\otimes L_{n+1}$. Therefore, the map $\psi_{n,n+1}(v + N_n) = \varphi_{n,n+1}(v)+N_{n+1}$ defines the desired embedding.
\end{proof}

\begin{cor}\label{prop:Kac.mod.structure}
Let $L := \varinjlim L_n$ be a locally simple weight $\g_0$-module. Then  $N(L) = \varinjlim N_n(L_n)$, and $\bL(L)\cong \varinjlim_\psi \bL_n(L_n)$ where the latter limit is taken over the sequence of embeddings $\{\bL_n(L_n)\hookrightarrow \bL_{n+1}(L_{n+1})\}$ provided by Proposition~\ref{prop:g_0-emb.g-emb}. Moreover, $\bL(L)^{\g_1} = \varinjlim \bL_n(L_n)^{\g(n)_1} \cong L$.
\end{cor}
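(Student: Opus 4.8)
The plan is to show that the Kac construction commutes with the direct limits involved and then to pin down both the maximal submodule and the $\g_1$-invariants by a degree argument. In the cases at hand the compatible $\Z$-grading has depth one, so $\g=\g_{-1}\oplus\g_0\oplus\g_1$ with $\g_{-1}$ purely odd and $\g_>=\g_1$, $\g_<=\g_{-1}$. First I would record that, by the PBW theorem, $K(L)\cong\bU(\g_{-1})\otimes_\C L$ and $K_n(L_n)\cong\bU(\g(n)_{-1})\otimes_\C L_n$ as vector spaces, with $\g_{-1}=\varinjlim\g(n)_{-1}$ and $L=\varinjlim L_n$. Since $\bU$ and $\otimes_\C$ commute with filtered colimits, the embeddings $\varphi_{n,n+1}$ of Proposition~\ref{prop:g_0-emb.g-emb} identify $K(L)$ with $\varinjlim K_n(L_n)$ as $\g$-modules. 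Using again that $\varphi_{n,n+1}(N_n(L_n))\subseteq N_{n+1}(L_{n+1})$, the images of the $N_n(L_n)$ form an increasing chain; I set $N':=\varinjlim N_n(L_n)\subseteq K(L)$, and a routine check (any $x\in\g$ and $w\in N'$ land in a common $\g(k)$, resp.\ a common $\g(k)$-stable $N_k(L_k)$) shows $N'$ is a $\g$-submodule. By exactness of filtered colimits, $K(L)/N'\cong\varinjlim_\psi\bL_n(L_n)$.

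The next step is to prove that $\varinjlim_\psi\bL_n(L_n)$ is simple. Here I would use the union-of-simples argument: the maps $\psi_{n,n+1}$ are injective and $\g(n)$-equivariant, so the limit is a well-defined $\g$-module equal to $\bigcup_n\bL_n(L_n)$; for a nonzero $w$ one has $w\in\bL_n(L_n)$ for some $n$, and simplicity of each $\bL_m(L_m)$ over $\g(m)$ gives $\bU(\g(m))w=\bL_m(L_m)$ for all $m\geq n$, whence $\bU(\g)w\supseteq\bigcup_{m\geq n}\bL_m(L_m)$ is everything. Thus $K(L)/N'$ is simple, so $N'$ is a maximal proper submodule; since $K(L)$ has a \emph{unique} maximal proper submodule $N(L)$, this forces $N'=N(L)$, and therefore $\bL(L)=K(L)/N(L)\cong\varinjlim_\psi\bL_n(L_n)$.

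For the statement on invariants I would exploit the depth-one grading. Each $\bL_n(L_n)$ inherits a grading $\bigoplus_{k\geq0}\bL_n(L_n)_{-k}$ from $K_n(L_n)\cong\bU(\g(n)_{-1})\otimes L_n$ (graded by $\g(n)_{-1}$-degree), and since $N_n(L_n)\subseteq\bU(\g(n)_{-1})^+\otimes L_n$ the top component is $\bL_n(L_n)_0=L_n$. A graded element is $\g(n)_1$-invariant iff each homogeneous component is, so it suffices to rule out invariants in negative degree: if $0\neq v\in\bL_n(L_n)_{-k}$ with $k\geq1$ and $\g(n)_1 v=0$, then $[\g(n)_1,\g(n)_0]\subseteq\g(n)_1$ gives $\g(n)_1\bU(\g(n)_0)v=0$, so by PBW $\bU(\g(n))v\subseteq\bigoplus_{j\leq-k}\bL_n(L_n)_j$, contradicting $\bU(\g(n))v=\bL_n(L_n)\ni L_n\neq0$. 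Hence $\bL_n(L_n)^{\g(n)_1}=L_n$, and since the $\psi_{n,n+1}$ carry tops to tops, passing to the limit yields $\varinjlim\bL_n(L_n)^{\g(n)_1}=\varinjlim L_n=L$. Finally, any $w\in\bL(L)^{\g_1}$ lies in some $\bL_n(L_n)$ and is killed by $\g(n)_1$, so $w\in\bL_n(L_n)^{\g(n)_1}=L_n\subseteq L$; conversely $L$ is annihilated by $\g_1$ since it sits in top degree at every level. This gives $\bL(L)^{\g_1}=L$.

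The main obstacle I anticipate is the interplay between the two kinds of limit: the transition maps $\psi_{n,n+1}$ are only $\g(n)$-equivariant, not $\g(n+1)$-equivariant, so one must argue carefully that $\varinjlim_\psi\bL_n(L_n)$ is genuinely a $\g$-module and that the union-of-simples argument applies to it. Everything else reduces to exactness of filtered colimits and the degree bookkeeping above; the one genuinely module-theoretic input is the identification $\bL_n(L_n)^{\g(n)_1}=L_n$, whose proof is the grading argument just sketched.
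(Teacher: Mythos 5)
Your argument is correct and follows essentially the same route as the paper: both rest on the compatible embeddings from Proposition~\ref{prop:g_0-emb.g-emb}, the uniqueness of the maximal proper submodule of a Kac module, and the identification $\bL_n(L_n)^{\g(n)_1}\cong L_n$. The only difference is one of detail — you prove the simplicity of $\varinjlim_\psi\bL_n(L_n)$ and the computation of the $\g(n)_1$-invariants explicitly (via the depth-one grading), where the paper records these as immediate consequences of the commutative diagram of short exact sequences.
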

\begin{proof}
Proposition~\ref{prop:g_0-emb.g-emb} implies that the following diagram of $\g(n)$-modules is commutative
	\[
\begin{tikzcd}
0 \arrow[r] & N_n(L_n) \arrow[r, "f_n"] \arrow[d, hook, "\varphi_{n,n+1}"]  & K_n(L_n) \arrow[r, "g_n"] \arrow[d, hook, "\varphi_{n,n+1}"] & \bL_n(L_n) \arrow[r] \arrow[d, hook, "\psi_{n,n+1}"] & 0 \\
0 \arrow[r] & N_{n+1}(L_{n+1}) \arrow[r, "f_{n+1}"]  & K_{n+1}(L_{n+1}) \arrow[r, "g_{n+1}"] & \bL_{n+1}(L_{n+1}) \arrow[r] & 0.
\end{tikzcd}
	\]
Since, for every $n$, the $\g(n)$-module $N_n(L_n)$ is the unique maximal proper submodule of $K_n(L_n)$, we conclude that $N(L) = \varinjlim N_n(L_n)$ and $\bL(L)\cong \varinjlim_\psi \bL_n(L_n)$. The claim that $\bL(L)^{\g_1} = \varinjlim \bL_n(L_n)^{\g(n)_1} \cong L$ follows from the fact that $\bL_n(L_n)^{\g(n)_1}\cong L_n$.

 \details{The composition of $\g(n)$-modules $K_n(L_n)  \stackrel{\psi_n}{\hookrightarrow} K(L)\to \bL(L)$ yields an embedding of $\g(n)$-modules $\varphi_n : \bL_n(L_n) \hookrightarrow \bL(L)$ for which $\psi_n = \psi_k\circ \psi_{n,k}$ for all $k\geq n$. Thus $\bL(L)\cong \varinjlim_\psi \bL_n(L_n)$.}
\end{proof}

Observe that for $\g = \fsl(\infty|m)$ with $m\in \Z_{\geq 2}\cup\{\infty\}$ or $\g=\fosp_C(2|\infty)$, the Kac module $K(L)$ is not bounded for any choice of $L$ since $\Lambda (\g_{-1}):=\bigoplus_{k=0}^\infty \Lambda^k (\g_{-1})$ is not bounded as a $\g_0$-module (in fact, $K(L)$ does not have finite-dimensional weight spaces). 

Assume that $M = \varinjlim \bL_{\fb(<_n)}(\lambda(n))$ is a locally simple integrable $\g$-module for a given chain of embeddings of $\g(n)$-modules $\bL_{\fb(<_n)}(\lambda(n))\hookrightarrow \bL_{\fb(<_{n+1})}(\lambda(n+1))$. We call the module $M$ \emph{typical} if there exists $N\in \Z_{>0}$ for which $(\lambda(n) + \rho_n, \beta) \neq 0$ for every $\beta\in \D(n)_{1}$ and $n\geq N$, and \emph{atypical} otherwise. Suppose in addition that $L = \varinjlim L_{\fb(<_n)_{\bar 0}}(\lambda(n))$ and the embeddings $\bL_{\fb(<_n)}(\lambda(n))\hookrightarrow \bL_{\fb(<_{n+1})}(\lambda(n+1))$ are defined as in Proposition~\ref{prop:g_0-emb.g-emb}. Then if $M = \varinjlim \bL_{\fb(<_n)}(\lambda(n))$ is typical, there is an isomorphism of $\g$-modules $M \cong K(L)$. This follows from the well known fact that $K_n(\lambda(n))$ is simple whenever $(\lambda(n) + \rho_n, \beta) \neq 0$ for all $\beta\in \D(n)_{1}$.

A weight $\mu(n)\in \fh(n)^*$ is \emph{singly atypical} if $(\mu(n), \beta)=0$ for a unique pair of mutually opposite odd roots $\pm \beta\in \D(n)_1$. It is known that if $\g(n)$ equals $\fsl(m|1)$ or $\fosp(2|2n)$ and $\lambda(n)$ dominant integral, then the $\g(n)$-module $\bL_{\fb(<_n)}(\lambda(n))$ is atypical if and only if the weight $\lambda(n)+\rho_n$ is singly atypical with respect to an odd root $\alpha_n$. In the latter case the module $K_n(\lambda(n))$ has length $2$ and its maximal proper submodule  is isomorphic to $\Pi^{p_n} \bL_{\fb(<_n)}(\lambda(n)_{\alpha_n})$, where the weight $\lambda(n)_{\alpha_n}$ is obtained by subtracting from $\lambda(n)$ a sum of positive odd roots which are uniquely determined by $\lambda(n)$ (see \cite[\S~6~and~7]{VHKT90} for details). Moreover, if $\beta_1+\cdots + \beta_{k_n}$ is this sum of odd roots then $p_n = k_n$. We also notice that $\lambda(n)_{\alpha_n}$ can be obtained from $\lambda(n)$ by a legal move of weight zero (see \cite[Corollary~6.4]{MS11} where there is a typo in the statement: it should be $\lambda(f)>\lambda(g)$). Since for $\fsl(m|1)$ and $\fosp(2|2n)$ there is at most one such legal move, there is no ambiguity in defining $\lambda(n)_{\alpha_n}$ in this way.

\begin{cor}\label{cor:Kac.mod.structure}
Suppose $\g$ equals $\fsl(\infty|1)$ or $\fosp_C(2|\infty)$.  Let $L = \varinjlim L_{\fb(<_n)_0}(\lambda(n))$ be any locally simple integrable weight $\g_0$-module. Then either $N(L)=0$ and  $\bL(L)\cong K(L)$, or $N(L)\cong \varinjlim  \Pi^{p_n}\bL_{\fb(<_n)}(\lambda(n)_\alpha)$. In particular, the $\g$-module $K(L)$ is either simple or has length $2$.
\end{cor}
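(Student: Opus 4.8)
The plan is to reduce the statement to the known structure of Kac modules over $\fsl(n|1)$ and $\fosp(2|2n)$, and then to pass to the direct limit, letting the injectivity of the transition maps force the stated dichotomy. First I would apply Corollary~\ref{prop:Kac.mod.structure} to the locally simple module $L=\varinjlim L_n$ with $L_n := L_{\fb(<_n)_0}(\lambda(n))$: this yields $N(L)=\varinjlim N_n(L_n)$ and $\bL(L)\cong\varinjlim_\psi \bL_n(L_n)$, where by Proposition~\ref{prop:g_0-emb.g-emb} every transition map $\varphi_{n,n+1}|_{N_n(L_n)}\colon N_n(L_n)\hookrightarrow N_{n+1}(L_{n+1})$ is an embedding of $\g(n)$-modules. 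Since $\bU(\g)=\varinjlim \bU(\g(n))$ and $\g_+=\varinjlim\g(n)_+$, one also has $K(L)=\varinjlim K_n(L_n)$.

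Next I would invoke the feature that distinguishes these two series: for $\g(n)=\fsl(n|1)$ or $\fosp(2|2n)$, the structure theory recalled above (see \cite{Kac78,VHKT90}) shows that $K_n(L_n)$ is simple exactly when $\lambda(n)$ is typical, in which case $N_n(L_n)=0$; otherwise $\lambda(n)+\rho_n$ is \emph{singly} atypical, $K_n(L_n)$ has length $2$, and its maximal proper submodule is $N_n(L_n)\cong \Pi^{p_n}\bL_{\fb(<_n)}(\lambda(n)_{\alpha_n})$, which is simple. Thus, at every finite level, $N_n(L_n)$ is either zero or a simple $\g(n)$-module, and $K_n(L_n)$ has length at most $2$.

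The heart of the argument is then purely formal. Because each map $\varphi_{n,n+1}|_{N_n(L_n)}$ is injective, $N_n(L_n)\neq 0$ forces $N_{n+1}(L_{n+1})\neq 0$, so the set $\{\,n : N_n(L_n)\neq 0\,\}$ is upward closed, hence either empty or a ray $\{\,n\ge n_0\,\}$. In the first case $N_n(L_n)=0$ for all $n$, whence $N(L)=0$ and $\bL(L)\cong\varinjlim K_n(L_n)\cong K(L)$; this is precisely the typical case. In the second case $N(L)=\varinjlim_{n\ge n_0} N_n(L_n)=\varinjlim \Pi^{p_n}\bL_{\fb(<_n)}(\lambda(n)_{\alpha})$, as asserted. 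For the final claim, a direct limit of simple $\g(n)$-modules along injective $\g(n)$-equivariant maps is locally simple, hence simple, so $N(L)$ is a simple $\g$-module whenever it is nonzero, while $\bL(L)$ is simple by construction. Since each $K_n(L_n)$ has length $\le 2$ with composition factors among $\bL_n(L_n)$ and $N_n(L_n)$, the module $K(L)=\varinjlim K_n(L_n)$ has composition factors among $\bL(L)$ and $N(L)$, and is therefore simple or of length $2$.

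The one place demanding care is the correct import of the finite-level facts — that for these two series every atypical weight is singly atypical, and that the maximal submodule $N_n(L_n)$ is simple of the displayed form with the right parity shift $\Pi^{p_n}$. These are exactly the results cited from \cite{Kac78,VHKT90} (together with the legal-move description preceding the corollary), and this is where the restriction to $\fsl(\infty|1)$ and $\fosp_C(2|\infty)$, as opposed to $\fsl(\infty|m)$ with $m\ge 2$, is essential. Granting them, the passage to the limit is dictated by the injectivity of the transition maps and requires no further input.
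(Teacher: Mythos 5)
Your proposal is correct and follows essentially the same route as the paper: the paper's proof is a one-line appeal to the preceding discussion of singly atypical weights for $\fsl(n|1)$ and $\fosp(2|2n)$ together with Corollary~\ref{prop:Kac.mod.structure}, which is exactly the combination you spell out. The extra detail you supply (the upward-closedness of $\{n: N_n(L_n)\neq 0\}$ and the simplicity of a direct limit of simple modules along injective equivariant maps) is a faithful elaboration of what the paper leaves implicit.
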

\begin{proof}
The statement follows from  the above discussion and Corollary~\ref{prop:Kac.mod.structure}.
\end{proof}

Let $\cC$ be the category of weight modules with finite-dimensional weight spaces over $\g$ or $\g(n)$. For any $M\in \cC$ we can consider the \emph{restricted dual} $\g$-module $M_*\in \cC$ which is defined in \eqref{eq:lower.dual}. The  functor $M\mapsto M_*$ defines a contravariant auto-equivalence of $\cC$. Next, let $\omega$ be the automorphism of $\g$ defined by taking the direct limit of the automorphisms defined in \cite[\S~5.2]{Mus12}, and let $M^\vee$ denote the $\g$-module $M_*$ with action twisted by $\omega$ (see \cite[pg. 20]{MS11}). The functor $M\to M^\vee$ is also a contravariant auto-equivalence of $\cC$, now with the additional property that $S^\vee\cong S$ for all simple modules $S\in \cC$.

We will show that, up to applying $\Pi$, the following example provides all nontrivial extensions between simple objects of $\cB^{\Int}$ for $\g=\fsl(\infty|1)$.

\begin{example}\label{ex:nontrivial.ext}
Let $\g=\fsl(\infty|1)$ and $\C = \varinjlim L_{\fb(<_n)_0}(0^{(n)}|0)$ be the trivial one-dimensional $\g_0=\fgl(\infty)$-module. For every $n$, the weight $\rho_n\in \fh(n)^*$ is singly atypical with respect to the odd root $\alpha = \delta_1-\varepsilon$, and $(0^{(n)}|0)_\alpha = -\alpha = (0^{(n-1)}, -1|1)$. Then 
	\[
N(\C)\cong \varinjlim \Pi\bL_{\fb(<_n)}(0^{(n-1)}, -1|1) \cong \varinjlim  \Pi\bL_{\fb(<_n)}(1^{(n-1)}, 0|0) \cong \varinjlim \Pi \Lambda^{n-1}\bV_n,
	\] 
and in the category of bounded weight modules over $\fsl(\infty|1)$ we have the following non-split short exact sequence 
	\[
0\to \Lambda_\cA^\infty \bV \to K(\C)\to \C\to 0,
	\]
where $\cA $ is the sequence of ordered pairs $(a_n = n-1, b_n = 1)$ for all $n\in \Z_{>1}$.  Application of $(\cdot)_*$ on this short exact sequence yields the non-split short exact sequence
	\[
0\to \C \to K(\C)_*\to \Lambda_\cA^\infty \bV_*\to 0,
	\]
where $\Lambda_\cA^\infty \bV_*\cong \varinjlim \Pi\Lambda^{n-1}\bV_n^* \cong \varinjlim \bL_{\fb(<_n)}(0^{(n)}|1-n)$.

Set $\lambda(n):=(0^{(n)}|1-n)$, $\mu(n):=(-1^{(n)}|1)$ and $\nu(n):=(-1^{(n-1)},-2|2)$ (here we choose representatives of the weights defining $\Lambda_\cA^\infty \bV_*$, $\C$ and $\Lambda_\cA^\infty \bV$, respectively, so that the action of the center of $\fgl(n|1)$ on the modules $\bL_{\fgl}(\lambda(n))$, $\bL_{\fgl}(\mu(n))$ and $\bL_{\fgl}(\nu(n))$ coincides, see Remark~\ref{rem:reduction.to.gl(n|m)}). Then we have a sequence of legal moves of weight zero:
	\[
f_{\lambda(n)}\to f_{\mu(n)}\to f_{\nu(n)}.
	\]
Moreover, we can check that if $\gamma(n)$ is a weight such that $f_{\gamma(n)}\to f_{\lambda(n)}$ or $f_{\nu(n)}\to f_{\gamma(n)}$, then $\varinjlim \bL_{\fb(<_n)}(\gamma(n))$ is not an object in $\cB^{\Int}${\details{Indeed, they give a limit (partition growing) of the form $\varinjlim \bL_{\fb(<_n)}((1-n, -n^{(n-1)}|0))$, $\varinjlim \bL_{\fb(<_n)}((2^{(n-1)},0|0))$, respectively}}. Thus the sequence $f_{\lambda(n)}\to f_{\mu(n)}\to f_{\nu(n)}$ is maximal with the property that all objects $\varinjlim \bL_{\bf(<_n)}(\lambda(n))$, $\varinjlim \bL_{\bf(<_n)}(\mu(n))$ and $\varinjlim \bL_{\bf(<_n)}(\nu(n))$ are in $\cB^{\Int}$.
\demo
\end{example}

In the following proposition we assume that $\g=\fsl(\infty|1)$. Let $L = \varinjlim L_{\fb(<_n)_0}(\lambda(n))$, $L' = \varinjlim L_{\fb(<_n)_0}(\mu(n))$ be locally simple integrable weight $\g_0$-modules and $p,q\in \{0,1\}$. Assume also that $M := \Pi^p\bL(L)$ and $N := \Pi^q \bL(L')$ have finite-dimensional weight spaces.

\begin{prop}\label{prop:non-triv.ex=kac}
If $M = \Pi^p\bL(L)$ and $N = \Pi^q \bL(L')$, then  $\dim \Ext_{\g, \fh}^1(M,N)\leq 1$. Moreover, $\dim \Ext_{\g, \fh}^1(M,N) = 1$ precisely when, for sufficient large $n$, all $\lambda(n)+\rho_n$ are  singly atypical with respect to an odd root $\alpha_n$ and $\mu(n) = \lambda(n)_{\alpha_n}$, or vice-versa. In the latter case, if $E$ is a nontrivial extension of $M$ by $N$, then either $E\cong \Pi^pK(L)$ and $N\cong \Pi^pN(L)$, or $E\cong (\Pi^qK(L'))^\vee$ and $M\cong \Pi^qN(L')$.
\end{prop}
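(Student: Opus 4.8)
The plan is to reduce the computation to the finite rank algebras $\g(n)=\fsl(n|1)$ and then pass to the direct limit via the Appendix. Since $M$ and $N$ are simple objects of $\cB^{\Int}$, they are locally simple (Theorem~\ref{thm:classification.sl.case}(a) together with Corollary~\ref{prop:Kac.mod.structure}), so $M=\varinjlim M_n$ and $N=\varinjlim N_n$ with $M_n=\Pi^p\bL_{\fb(<_n)}(\lambda(n))$ and $N_n=\Pi^q\bL_{\fb(<_n)}(\mu(n))$. First I would compute the finite rank extension group. By Remark~\ref{rem:reduction.to.gl(n|m)} one has, after the shift matching central characters, $\Ext^1_{\fsl(n|1),\fh(n)}(M_n,N_n)\cong \Ext^1_{\fgl(n|1)}(\bL_{\fgl}(\lambda(n)),\bL_{\fgl}(\mu(n)+\diamond))$, and \cite{MS11} shows this is nonzero exactly when $f_{\lambda(n)}\to f_{\mu(n)}$ or $f_{\mu(n)}\to f_{\lambda(n)}$, i.e.\ when the two weight diagrams differ by a single legal move of weight zero. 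Because $\fsl(n|1)$ has defect one, its atypical blocks are governed by a type $A_\infty$ quiver with single arrows, so this extension space is at most one-dimensional; and by the discussion preceding Corollary~\ref{cor:Kac.mod.structure}, the existence of such a legal move is precisely the statement that one of $\lambda(n)+\rho_n$, $\mu(n)+\rho_n$ (say the former) is singly atypical with respect to an odd root $\alpha_n$ and the other equals $\lambda(n)_{\alpha_n}$; the direction of the move (recorded in \cite[Corollary~6.4]{MS11}) determines which simple sits on top, so the two options are mutually exclusive.

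Next I would globalize. Restricting an extension $0\to N\to E\to M\to 0$ to $\g(n)$ and taking the submodule generated by a lift of a weight generator of $M$ produces a compatible family $0\to N_n\to E_n\to M_n\to 0$, hence a restriction map $\Ext^1_{\g,\fh}(M,N)\to \varprojlim_n\Ext^1_{\g(n),\fh(n)}(M_n,N_n)$. Corollary~\ref{prop:local.triv.ext=global} is exactly the injectivity of this map (a class that splits over $\g(n)$ for all $n\gg 0$ splits over $\g$). Since an inverse limit of vector spaces each of dimension at most one is itself at most one-dimensional, the finite rank bound yields $\dim\Ext^1_{\g,\fh}(M,N)\le 1$. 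Moreover, if neither atypicality condition holds then the finite extension groups vanish for $n\gg 0$, and injectivity gives $\Ext^1_{\g,\fh}(M,N)=0$.

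It remains to exhibit the nonzero class when the atypicality condition holds and to identify the extension. Here I would use the infinite Kac module. In the case where $\lambda(n)+\rho_n$ is singly atypical and $\mu(n)=\lambda(n)_{\alpha_n}$, Corollaries~\ref{prop:Kac.mod.structure} and~\ref{cor:Kac.mod.structure} show that $K(L)$ has length two with $N(L)\cong\varinjlim\Pi^{p_n}\bL_{\fb(<_n)}(\lambda(n)_{\alpha_n})$, so that $\Pi^p N(L)\cong N$ and $\Pi^p\bL(L)=M$. The non-split sequence
\[
0\to \Pi^p N(L)\to \Pi^p K(L)\to M\to 0
\]
then gives a nonzero element of $\Ext^1_{\g,\fh}(M,N)$, forcing $\dim=1$ by the upper bound, and $E\cong\Pi^p K(L)$ with $N\cong\Pi^p N(L)$. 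In the opposite case I would apply the contravariant auto-equivalence $(\cdot)^\vee$, which fixes all simple objects, to the length-two module $\Pi^q K(L')$: since $N(L')$ is of $M$-type (it involves $\lambda$) while $\bL(L')$ is of $N$-type, the dual sequence has $M$ on top and $N$ as a submodule, yielding the non-split extension with $E\cong(\Pi^q K(L'))^\vee$ and $M\cong\Pi^q N(L')$. As $\dim\Ext^1_{\g,\fh}(M,N)=1$ in both cases, any non-split extension is isomorphic up to scalar to the one constructed, which establishes the stated dichotomy.

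The main obstacle is the upper bound $\dim\le 1$: one must argue that a global nontrivial extension is completely pinned down by its finite rank truncations, combining the defect-one single-arrow block structure of $\fsl(n|1)$ with the injectivity of Corollary~\ref{prop:local.triv.ext=global}. A secondary but genuine difficulty is the careful bookkeeping of the parity shifts $\Pi^{p_n}$ needed to identify $N(L)$ with $N$ (respectively $N(L')$ with $M$) and to match the parities $p,q$ across the direct limit.
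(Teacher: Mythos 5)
Your proposal is essentially correct, but it reaches the conclusion by a genuinely different route than the paper. The paper argues directly on the module $E$: using Lemma~\ref{lem:ss.over.Lie} it locates $L\cong M^{\g_1}$ and $L'\cong N^{\g_1}$ as $\g_0$-submodules of $E$, observes $E=\bU(\g)L$, and splits into the two cases $\g_1L=0$ and $\g_1L\neq 0$; the first gives a surjection $\Pi^pK(L)\to E$ which must be an isomorphism by Corollary~\ref{cor:Kac.mod.structure}, and the second is reduced to the first by applying $(\cdot)^\vee$ and a support argument showing $\g_1L'=0$. This pins down the middle term (hence, since $M\ncong N$, the one-dimensionality of $\Ext^1$) without any finite-rank $\Ext$ computation. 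You instead obtain the upper bound from the finite-rank groups and the Appendix, and then use the Kac module only to realize the nonzero class; this is a clean division of labor, but it rests on two points you should state more carefully. First, Corollary~\ref{prop:local.triv.ext=global} as stated is only a vanishing criterion (``all finite-level $\Ext$ groups vanish for $n\gg0$ implies the global one vanishes''), not the injectivity of a restriction map; what you actually need is the identification $\Ext^1_{\g,\fh}(M,N)\cong\varprojlim_n\Ext^1_{\g(n),\fh(n)}(M_n,N_n)$, which is contained in the \emph{proof} of that corollary (via Proposition~\ref{prop:ext.homology}) but not in its statement, and your ad hoc construction of compatible sequences $0\to N_n\to E_n\to M_n\to 0$ by taking $\bU(\g(n))$ of a lift is not obviously well defined (the kernel need not be $N_n$). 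Second, the bound $\dim\Ext^1_{\fsl(n|1),\fh(n)}(M_n,N_n)\le 1$ is an input from the defect-one block combinatorics of \cite{MS11} that goes beyond the nonvanishing criterion the paper actually cites; it is true, but you should cite it explicitly. With these two points repaired, your argument is complete and arguably makes the source of the dimension bound more transparent, while the paper's argument has the advantage of identifying $E$ intrinsically and of not depending on the finite-rank multiplicity statement.
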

\begin{proof}
Let $0\to N\to E\to M\to 0$ be a non-split short exact sequence. Since the category of integrable weight $\g_0$-modules with finite-dimensional weight spaces is semisimple (see Lemma~\ref{lem:ss.over.Lie}), we can regard $M^{\g_{1}}\cong L$ and $N^{\g_1}\cong L'$ as simple $\g_0$-submodules of $E$.  As $E$ is a nontrivial extension, we obtain $E = \bU(\g)L$, and we have two possibilities: (1) $\g_1 L=0$ or (2) $\g_1 L\neq 0$. 

(1): There exists a surjective map of $\g$-modules $\Pi^pK(L)\to E$.  Since Corollary~\ref{cor:Kac.mod.structure} implies that $\Pi^pK(L)$ has length $2$ precisely when for sufficiently large $n$ the weights $\lambda(n)+\rho_n$ are singly atypical with respect to odd roots $\alpha_n$ (possibly depending on $n$), we conclude that $E\cong \Pi^pK(L)$ and $\mu(n) = \lambda(n)_{\alpha_n}$.

(2): Consider the non-split exact sequence $0\to M\to E^\vee\to N\to 0$.  Then $E^\vee = \bU(\g)L'$ and support arguments imply that $\g_1 L'=0$. Indeed, first notice that $\Supp E = \Supp M\cup \Supp N$ and set $\Delta(\g_+) := \{\beta\in \D\mid \g_\beta\subseteq \g_+\}$. Now, for any fixed $\lambda\in \Supp L$, $\lambda'\in \Supp L'$ we have $\Supp M \subseteq \lambda - \Z_{\geq 0}\Delta(\g_+)$ and $\Supp N \subseteq \lambda' - \Z_{\geq 0}\Delta(\g_+)$. Since $\g_1L\neq 0$ by assumption, we have $\g_1L\cap N\neq 0$. Thus $\lambda \in \lambda' - \Z_{\geq 0}\Delta(\g_+)$, and hence $\Supp E \subseteq \lambda'- \Z_{\geq 0}\Delta(\g_+)$. Therefore $\g_1L'=0$, and as in (1) we obtain an isomorphism of $\g$-modules $E^\vee\cong \Pi^qK(L')$, from which we conclude that $E\cong (\Pi^qK(L'))^\vee$ and $\lambda(n) = \mu(n)_{\alpha_n}$ for all sufficient large $n$.
\end{proof}

Recall that two simple modules $M,N\in \cB^{\Int}$ are in the same block if and only if $M\cong N$, or there are simple modules $M=L_1, L_2, \ldots, L_k=N$ of $\cB^{\Int}$ such that $\Ext_{\cB^{\Int}}^1(L_i, L_{i+1})\neq 0$ for all $i=1,\ldots, k-1$. The block of $M\in \cB^{\Int}$ will be denoted by $[M]$. A block $[M]$ is trivial if $[M]=\{M\}$. The next result describes the blocks of simple modules in $\cB^{\Int}$.

\begin{cor}\label{cor:blocks_sl(infty|1)}
Up to application of $\Pi$, the only nontrivial block of simple modules in $\cB^{\Int}$ is $[\C] = [\Lambda_\cA^\infty \bV] = [\Lambda_\cA^\infty \bV_*] = \{\C, \Lambda_\cA^\infty \bV, \Lambda_\cA^\infty \bV_*\}$, where $\cA $ is the sequence of ordered pairs $(a_n = n-1, b_n = 1)$ for all $n\in \Z_{>1}$.
\end{cor}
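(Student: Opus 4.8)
The plan is to read off the entire block quiver of $\cB^{\Int}$ from Proposition~\ref{prop:non-triv.ex=kac} together with the combinatorics of legal moves of weight zero, and then to isolate the unique nontrivial path via Example~\ref{ex:nontrivial.ext}. First I would recall from Theorem~\ref{thm:classification.sl.case}(b) the complete list of simple objects of $\cB^{\Int}$ (up to $\Pi$): $\C=S^\emptyset\bV$, the Schur modules $S^\mu\bV$, $S^\mu\bV_*$ with $\mu$ nonempty, the symmetric modules $S_\cA^\infty\bV$, $S_\cA^\infty\bV_*$, the exterior modules $\Lambda_\cA^\infty\bV$, $\Lambda_\cA^\infty\bV_*$, and the nonintegral modules $\bL_{\fb(<)}(0^{(\infty)}|a)$, $\bL_{\fb(>)}(0^{(\infty)}|a)$ for $a\in\C\setminus\Z$. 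For two such simple modules $M$ and $N$ with stabilized $\fb(<_n)$-highest weights $\lambda(n)$ and $\mu(n)$, Proposition~\ref{prop:non-triv.ex=kac} gives $\dim\Ext_{\g,\fh}^1(M,N)\le 1$, with equality exactly when, for $n\gg 0$, $f_{\mu(n)}$ is obtained from $f_{\lambda(n)}$ by a legal move of weight zero, or vice versa. Hence two simple objects are linked in $\cB^{\Int}$ only when one is atypical and the other is its legal-move image.

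Next I would exploit that $\g(n)=\fsl(n|1)$ has a single right coordinate, so every diagram $f_{\lambda(n)}$ carries at most one symbol $\times$. Consequently each atypical module admits a \emph{unique} legal move of weight zero---sliding its $\times$ to the nearest $\circ$ on its left---and is the image of at most one such move (sliding a $\times$ to the first $\circ$ on its right); therefore the blocks of $\cB^{\Int}$ are disjoint unions of paths, each vertex having at most one legal-move neighbour on either side. On the list above I would then flag atypicality using the criterion $(\lambda(n)+\rho_n,\varepsilon_i-\delta)=0$: the nonintegral modules are typical and so isolated (Corollary~\ref{cor:Kac.mod.structure}), while $\C$ and all the Schur, symmetric and exterior modules are atypical for $n\gg 0$.

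For the nontrivial block I would invoke Example~\ref{ex:nontrivial.ext}: the chain $f_{\lambda(n)}\to f_{\mu(n)}\to f_{\nu(n)}$, with $\lambda(n),\mu(n),\nu(n)$ the weights of $\Lambda_\cA^\infty\bV_*$, $\C$ and $\Lambda_\cA^\infty\bV$ for $\cA$ given by $a_n=n-1$, $b_n=1$, produces the two nonsplit extensions recorded there, so $\Ext_{\g,\fh}^1(\C,\Lambda_\cA^\infty\bV)\ne 0$ and $\Ext_{\g,\fh}^1(\Lambda_\cA^\infty\bV_*,\C)\ne 0$, placing the three modules in one block. The maximality of that chain, also established in Example~\ref{ex:nontrivial.ext}, shows that no legal move out of $\Lambda_\cA^\infty\bV$ or $\Lambda_\cA^\infty\bV_*$ lands on an object of $\cB^{\Int}$; combined with the uniqueness of legal moves this pins the block down to exactly $\{\C,\Lambda_\cA^\infty\bV,\Lambda_\cA^\infty\bV_*\}$, so that $\C$ has precisely these two neighbours.

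The remaining and most delicate step is to verify that every other simple module is isolated, i.e.\ that for each atypical module outside this triple both its unique legal-move image and its unique legal-move preimage leave $\cB^{\Int}$; I expect this exhaustive diagram bookkeeping to be the main obstacle. Concretely, I would compute that sliding the $\times$ in the diagram of $S^\mu\bV$, $S_\cA^\infty\bV$ or a fixed exterior module $\Lambda^c\bV$ (and dually for the starred modules) produces a weight whose non-shifted form is a mixed Schur--exterior partition whose length grows with $n$ (for instance of shape $(2^{(c)},1^{(n-c-1)},0)$ or $(\star,1^{(n-2)},0)$), hence neither a fixed-partition nor a pure $\Lambda_\cA^\infty$-type module, and therefore, by Theorem~\ref{thm:classification.sl.case}, not an object of $\cB^{\Int}$; the reverse moves produce weights filling a gap and are likewise excluded. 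The conceptual point to isolate is that $\C$ is special because its diagram is the densest, with $\times$ sitting directly beneath a solid block of $>$'s, so its single leftward slide lands exactly on the genuine exterior module $\Lambda_\cA^\infty\bV$, whereas any extra part or gap present in the other modules turns the slide into a non-admissible weight. Finally, since $\Pi$-twisting alters neither atypicality nor the legal-move graph, the analysis is unchanged up to $\Pi$, which completes the identification of all blocks and proves the claim.
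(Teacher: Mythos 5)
Your proposal follows essentially the same route as the paper: reduce via Proposition~\ref{prop:non-triv.ex=kac} to the combinatorics of legal moves of weight zero on singly atypical weights, use Example~\ref{ex:nontrivial.ext} to exhibit and bound the block $\{\C,\Lambda_\cA^\infty\bV,\Lambda_\cA^\infty\bV_*\}$, and then check case by case that the legal-move images and preimages of all other admissible weights never land on a module of $\cB^{\Int}$. The only caveat is in the final bookkeeping step, which (as in the paper's proof) must also cover non-stabilizing exterior sequences $\cA'$ and compare weights only up to the central shift $(c^{(n)}\,|\,-c)$ coming from Remark~\ref{rem:reduction.to.gl(n|m)}; your sketch is consistent with this but does not spell it out.
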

\begin{proof}
Corollary~\ref{cor:classification.sl.case} implies that it is enough to compute the blocks $[\C]$, $[S^\mu \bV]$, $[S_\cA^\infty \bV]$, $[\Lambda_\cA^{\infty} \bV]$, $[\bL_{\fb(>)}(0^{(\infty)}|a)]$ and $[\bL_{\fb(<)}(0^{(\infty)}|a)]$ for $a\in \C\setminus \Z$. The other cases will follow by application of $(\cdot)_*$ and possibly $\Pi$. Since the weight $(0^{(\infty)}|a)$ for $a\in \C\setminus \Z$ is typical we conclude that $[\bL_{\fb(<)}(0^{(\infty)}|a)]=\{\bL_{\fb(<)}(0^{(\infty)}|a)\}$ and $[\bL_{\fb(>)}(0^{(\infty)}|a)]=\{\bL_{\fb(>)}(0^{(\infty)}|a)\}$. The $\g$-modules $\C$, $S^\mu \bV$, $S_\cA^\infty \bV$ and $\Lambda_\cA^{\infty} \bV$ can be obtained as respective direct limits $\varinjlim_\psi \bL_{\fb(<_n)_0}(\lambda(n))$ where the weights $\lambda(n)$ of the three latter modules are as in {\rm \hyperref[sym.type.mod]{($\Omega_1$)}, \hyperref[ext.type.mod.dual]{($\Omega_4$)}} or {\rm \hyperref[partition.type.mod]{($\Omega_5$)}}, respectively. Now, we can check: (1) for sufficiently large $n$, all weights $\lambda(n)+\rho_n$ are atypical with respect to $\alpha = \delta_1-\varepsilon$, and in particular $\lambda(n)_\alpha = \lambda(n)-\alpha$; (2) if $\varinjlim_\psi \bL_{\fb(<_n)_0}(\lambda(n))\ncong \C, \Lambda_\cA^{\infty} \bV_*$ where $\cA$ is as in the statement, then for any $c\in \C$ the weights $\lambda(n)_\alpha + (c^{(n)}|-c)$ do not occur as $\fb(<_n)$-highest weights of modules in {\rm \hyperref[sym.type.mod]{($\Omega_1'$)}-\hyperref[partition.type.mod.dual]{($\Omega_6'$)}}, nor do they define the trivial module $\C$; {\details{(this is easy since we know precisely what is $\lambda(n)-\alpha$)}} (3) if $\varinjlim_\psi \bL_{\fb(<_n)_0}(\lambda(n))\ncong \C, \Lambda_\cA^{\infty} \bV$, where $\cA$ is as in the statement and if $\mu(n)$ is a sequence of weights such that $f_{\mu(n)} \to f_{\lambda(n)}$, then for any $c\in \C$ the weights $\mu(n)+(c^{(n)}|-c)$ do not occur as $\fb(<_n)$-highest weights of modules in {\rm \hyperref[sym.type.mod]{($\Omega_1'$)}-\hyperref[partition.type.mod.dual]{($\Omega_6'$)}}. {\details{(here we check case by case: for $\lambda(n)=(a_n, 0^{(n-1)}|0)$ we have $\mu(n) = (a_n,1^{(n-1)}|1-n)=(a_n+1-n, (2-n)^{(n-1)}|0)$; for $\lambda(n)=(\mu_1,\ldots, \mu_k, 0^{(n-k)}|0)$ we have $\mu(n) = (\mu_1,\ldots, \mu_k, 1^{(n-k)}|k-n)= (\mu_1+k-n,\ldots, \mu_k+k-n, (k-n+1)^{(n-k)}|0)$; for $\lambda(n)=(1^{(a_n)}, 0^{(k)}|0)$ with $a_n-n\geq 2$ we have $\mu(n) = (1^{(n)}|-2)= (-1^{(n)}|0)$; for $\lambda(n)=(1^{(n)}|a_n-n)$ with $a_n>n$ (and hence $n+1-a_n\leq 0$) we do not have such $\mu(n)$, since $\times (f_{\lambda(n)})=\emptyset$).}} Finally, (1)-(3) and Proposition~\ref{prop:non-triv.ex=kac} imply that up to application of $\Pi$ the only nontrivial extensions of simple objects in $\cB^{\Int}$ are given in Example~\ref{ex:nontrivial.ext}. The statement follows.
\end{proof}

\appendix
\section{ }\label{sec:appendix}
For every $n\in \Z_{>0}$, let $\g(n)$ be a finite-dimensional Lie superalgebra and let $\fh(n)\subseteq \g(n)_{\bar 0}$ be a fixed toral subalgebra of $\g(n)_{\bar 0}$, that is, each nonzero element of $\fh(n)$ acts semisimply on $\g(n)$ under the adjoint representation. It is well known that $\fh(n)$ is an abelian subalgebra of $\g(n)$ and that $\fh(n)$ acts semisimply on $\g(n)$ under the adjoint representation. An $\fh(n)$-\emph{weight} $\g(n)$-\emph{module} is by definition a $\g(n)$-module on which $\fh(n)$ acts semisimply.

An embedding of Lie superalgebras $\varphi:\g(n)\hookrightarrow \g(n+1)$ is an $\fh(n)$-\emph{weight embedding} if $\varphi(\fh(n))\subseteq \fh(n+1)$ and $\varphi$ maps every $\fh(n)$-weight space of $\g(n)$ into one $\fh(n+1)$-weight space of $\g(n+1)$. In this section, we assume that $\g$ is a Lie superalgebra isomorphic to the direct limit of a chain of weight embeddings $\g(n)\hookrightarrow \g(n+1)$. Although we are mainly interested in the Lie superalgebras listed in Section~\ref{sec:prel}, the class of Lie superalgebras we consider here is much more general, for instance $\g(n)$ may be a simple finite-dimensional Lie superalgebra of Cartan type.

Define
	\[
\bU^0:=C_{\bU(\g)}(\fh),\quad \bU_n^0 := \bU^0\cap \bU(\g(n))\quad\text{for every } n\in \Z_{>0}.
	\]

The following Lemma is a version of \cite[Lemma~4.2]{GP20}.

\begin{lem}\label{lem:A1}
If $M$ is a finite-dimensional simple $\bU^0$-module, then there exists $K> 0$ such that $M$ is a simple $\bU_n^0$-module for every $n>K$.
\end{lem}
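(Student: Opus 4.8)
The plan is to reduce the statement to an elementary fact about an increasing chain of finite-dimensional algebras acting on the finite-dimensional space $M$, thereby avoiding any appeal to a density theorem. First I would record that $\bU^0 = \bigcup_n \bU_n^0$ and that this is an \emph{increasing} union of subalgebras. The inclusion $\bigcup_n \bU_n^0 \subseteq \bU^0$ is immediate from the definition $\bU_n^0 = \bU^0\cap \bU(\g(n))$. Conversely, since $\g = \varinjlim \g(n)$ we have $\bU(\g) = \bigcup_n \bU(\g(n))$, so any $u\in \bU^0$ lies in some $\bU(\g(m))$, whence $u\in \bU^0\cap \bU(\g(m)) = \bU_m^0$. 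As $\bU(\g(n))\subseteq \bU(\g(n+1))$, the chain $\bU_1^0\subseteq \bU_2^0\subseteq \cdots$ is increasing. (The weight-embedding hypothesis is used here only in that it guarantees $\fh = \varinjlim \fh(n)$ is a well-defined subalgebra of $\g$, so that $\bU^0$ makes sense.)

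Next I would pass to the images in $\End_\C(M)$. Let $\pi\colon \bU^0\to \End_\C(M)$ be the structure homomorphism and set $A := \pi(\bU^0)$ and $A_n := \pi(\bU_n^0)$. Since $M$ is finite-dimensional, $\End_\C(M)$, and hence $A$, is a finite-dimensional algebra. Each $A_n$ is a subalgebra of $A$ with $A_n\subseteq A_{n+1}\subseteq A$, and applying $\pi$ to $\bU^0 = \bigcup_n \bU_n^0$ gives $A = \bigcup_n A_n$. Moreover a subspace of $M$ is a $\bU^0$-submodule precisely when it is $A$-invariant, and it is a $\bU_n^0$-submodule precisely when it is $A_n$-invariant; so $M$ is simple over $A$, and the problem becomes showing that $A_n = A$ for $n\gg 0$.

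The key step is that the increasing chain $A_1\subseteq A_2\subseteq\cdots$ inside the finite-dimensional algebra $A$ must stabilize. Indeed, choose a finite $\C$-basis $a_1,\dots,a_d$ of $A$; each $a_i$ lies in $A=\bigcup_n A_n$, hence in some $A_{n_i}$, and setting $K := \max_i n_i$ and using that the chain is increasing yields $A_K \supseteq \Span_\C\{a_1,\dots,a_d\} = A$, so that $A_n = A$ for all $n\geq K$. Translating back, for $n>K$ a $\bU_n^0$-submodule of $M$ is an $A_n$-invariant, i.e.\ $A$-invariant, i.e.\ $\bU^0$-invariant subspace, hence is $0$ or $M$ by simplicity of $M$ over $\bU^0$. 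Thus $M$ is a simple $\bU_n^0$-module for every $n>K$.

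The argument is essentially formal once $\bU^0 = \bigcup_n \bU_n^0$ is established, and the one point deserving care — the closest thing to an obstacle — is the passage from the infinite-dimensional algebras $\bU_n^0$ to their finite-dimensional images $A_n$. This is exactly what makes the stabilization available: it lets the proof rest on the trivial fact that an ascending chain of subspaces exhausting a finite-dimensional space is eventually everything, rather than on Burnside's density theorem. A pleasant consequence is that no algebraic closedness of $\C$ is needed, and the super and non-super cases are handled uniformly, since one never has to discuss whether $M$ is absolutely simple (in particular the type-$\mathrm{Q}$ phenomenon $M\cong \Pi M$ causes no trouble).
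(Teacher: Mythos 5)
Your proof is correct. It shares its first half with the paper's argument: both exploit $\dim M<\infty$ to stabilize the increasing chain of images of the $\bU_n^0$ in $\End_\C(M)$, so that $\pi(\bU_K^0)=\pi(\bU^0)$ for some $K$. Where you diverge is in how simplicity over $\bU_n^0$ is then extracted. The paper invokes the Jacobson Density Theorem to identify the stable image with all of $\End_\C(M)$, and reads off simplicity from that; you instead observe that the equality $A_K=A$ of images alone already forces $\bU_n^0$-submodules and $\bU^0$-submodules of $M$ to coincide for $n\geq K$, so no density statement is needed. Your route is more elementary and, as you note, more robust: it makes no use of the algebraic closedness of $\C$ and, more to the point in this paper, it is insensitive to whether $\End_{\bU^0}(M)=\C$. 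In the super setting a graded-simple $M$ with $M\cong \Pi M$ (the type-$\mathrm{Q}$ phenomenon, which does occur for $\fq(\infty)$) admits a nontrivial odd endomorphism, so the image of $\bU^0$ is then a proper subalgebra of $\End_\C(M)$ and the paper's literal claim ``$\im\phi=\End M$'' requires the ungraded reading of simplicity to be accurate; your argument simply never raises the issue. The only thing the paper's version buys is brevity of statement. One tiny point of bookkeeping: your stabilization gives $A_n=A$ for all $n\geq K$, which covers the lemma's ``$n>K$'' with room to spare, so nothing is lost there.
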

\begin{proof}
The $\bU^0$-module structure on $M$ provides a sequence of maps $\phi_n:\bU_n^0 \to \End M$ such that $\im \phi_n\subseteq \im \phi_{k}$ for $k\geq n$. Since $\dim M<\infty$, there exists $K\in \N$ with $\im \phi_{n}=\im\phi_{k}$ for every $n\geq K$. The simplicity of $M$ as an $\bU^0$-module implies, via the Jacobson Density Theorem, that $\im (\phi: \bU^0 \to \End M)=\End M$. Since $\bU^0 = \bigcup_{n\geq 1}\bU_n^0$, we have $\im \phi = \bigcup_{n\geq 1}\im \phi_n = \im \phi_K$. Therefore $\im \phi_K=\End M$, and the statement is proved.
\end{proof}

Let $L = \bigoplus_{\mu\in \fh^*} L^\mu$ be an $\fh$-weight $\g$-module with finite-dimensional $\fh$-weight spaces $L^\mu$. Define
\begin{equation}\label{eq:lower.dual}
L_*:=\bigoplus_{\mu\in \fh^*} (L^\mu)^* \subseteq L^*.
\end{equation}
Then for any $\alpha\in \Supp \g$, $x\in \g^\alpha$, and $\lambda\in\Supp L$, we have $x(L^\lambda)^*\subseteq (L^{\lambda+\alpha})^*$. Therefore $L_*$ is an $\fh$-weight $\g$-submodule of $L^*$.

In what follows we consider the extension groups $\Ext_{\g, \fh}^i (M,N)$ in the category of $\fh$-weight $\g$-modules (see for instance \cite{Fuk86} and also \cite{Mus12}). 

The following proposition is due to V. Serganova.

\begin{prop}\label{prop:ext.homology}
Assume that $M$ and $L$ are $\fh$-weight $\g$-modules and that $L$ has  finite-dimensional weight spaces. Then $\Ext_{\g, \fh}^i (M,L) = (H_i(\g, \fh; M\otimes L_*))^*$ for any $i\in \Z_{\geq 0}$.
\end{prop}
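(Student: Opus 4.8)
The plan is to realize both sides as the (co)homology of a pair of mutually dual complexes. First I would invoke the standard fact (as developed in \cite{Fuk86} and \cite{Mus12}) that in the category of $\fh$-weight $\g$-modules the relative extension groups are computed by relative Lie superalgebra cohomology with coefficients in the internal Hom, so that
\[
\Ext_{\g,\fh}^i(M,L)\cong H^i\big(\g,\fh;\Hom_\C(M,L)\big),
\]
the cohomology of the relative Chevalley--Eilenberg complex $C^n=\Hom_\fh\big(\Lambda^n(\g/\fh),\Hom_\C(M,L)\big)$, where $\Lambda^n$ is the super exterior power fixed in the Notation and $\fh$ acts diagonally. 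On the homology side, since $\fh$ is abelian and acts semisimply, the relative homology $H_\bullet(\g,\fh;V)$ of a weight module $V$ is computed by the complex $C_n=\big(\Lambda^n(\g/\fh)\otimes V\big)_\fh$ of $\fh$-coinvariants, which for a weight module coincides with the subspace of $\fh$-invariants, equivalently with the weight-zero subspace $\big(\Lambda^n(\g/\fh)\otimes V\big)^0$. I would apply this with $V:=M\otimes L_*$.

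The core of the argument is a degree-wise duality $C^n\cong (C_n)^*$. Using tensor--hom adjunction and passing to $\fh$-invariants gives $C^n\cong\Hom_\fh\big(\Lambda^n(\g/\fh)\otimes M,L\big)$. Writing $P:=\Lambda^n(\g/\fh)\otimes M$, an $\fh$-equivariant map $P\to L$ preserves weights, hence lies in $\prod_\nu\Hom_\C(P^\nu,L^\nu)$. Here the hypothesis that $L$ has finite-dimensional weight spaces is exactly what is needed: it yields $\Hom_\C(P^\nu,L^\nu)\cong\big(P^\nu\otimes (L^\nu)^*\big)^*$. Since $(L^\nu)^*=(L_*)^{-\nu}$ by the definition of $L_*$, the factor $P^\nu\otimes (L_*)^{-\nu}$ is precisely the weight-$\nu$ contribution to $(P\otimes L_*)^0$, and summing over $\nu$ converts a product of duals into the dual of a direct sum:
\[
C^n\cong\prod_\nu\big(P^\nu\otimes (L_*)^{-\nu}\big)^*\cong\Big(\bigoplus_\nu P^\nu\otimes (L_*)^{-\nu}\Big)^*=\big((\Lambda^n(\g/\fh)\otimes M\otimes L_*)^0\big)^*=(C_n)^*.
\]
Throughout I would keep track of the Koszul signs so that these identifications are canonical in the super sense.

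It then remains to check that this duality intertwines the differentials, i.e.\ that the coboundary $d^n\colon C^n\to C^{n+1}$ corresponds under $C^\bullet\cong(C_\bullet)^*$ to the transpose $(\partial_{n+1})^*$ of the chain boundary $\partial_{n+1}\colon C_{n+1}\to C_n$. This is the familiar adjointness of the Chevalley--Eilenberg cochain and chain differentials, and I expect the only genuine care required is to reconcile the super signs with the exterior-power convention of the Notation; this sign bookkeeping is the main (though essentially formal) obstacle. Granting it, $C^\bullet$ is the linear dual complex of $C_\bullet$, and since dualization over $\C$ is exact we obtain $H^n(C^\bullet)\cong\big(H_n(C_\bullet)\big)^*$ for every $n$. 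Chaining the three identifications gives
\[
\Ext_{\g,\fh}^i(M,L)\cong H^i\big(\g,\fh;\Hom_\C(M,L)\big)=H^i(C^\bullet)\cong\big(H_i(C_\bullet)\big)^*=\big(H_i(\g,\fh;M\otimes L_*)\big)^*,
\]
which is the assertion.
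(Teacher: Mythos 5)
Your proposal is correct and follows essentially the same route as the paper: both identify the relative cochain complex degree-wise with the dual of the chain complex for $M\otimes L_*$ via the weight-space decomposition, using $\dim L^\nu<\infty$ to convert the product of duals $\prod_\nu\Hom_\C(P^\nu,L^\nu)$ into the dual of a direct sum, and then conclude by exactness of dualization. The paper records this as the single computation $\Hom_\fh(\,\cdot\,,L)\cong\big((\,\cdot\,\otimes L_*)^\fh\big)^*$ applied to $\Lambda^i(\g/\fh)\otimes M$, leaving the compatibility of differentials implicit, exactly as you anticipate.
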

\begin{proof}
Since $\dim L^\mu < \infty$ for every weight $\mu$, we have
\begin{equation} \label{eq:coho.homo}
\begin{split}
\Hom_\fh (M, L) & = \Hom_\fh \left(\bigoplus_\lambda M^\lambda, \bigoplus_\mu L^\mu\right)  \\
& = \prod_\lambda ((M^\lambda)^*\otimes L^\lambda) = \left(\bigoplus_\lambda M^\lambda\otimes (L^\lambda)^*\right)^* = \left( (M\otimes L_*)^\fh\right)^*,
\end{split}
\end{equation}

\noindent where $\Hom_\fh$ stands for parity preserving homomorphisms of $\fh$-modules. The statement now follows from to the fact that $\Ext_{\g, \fh}^i (M,L) := H^i(\g,\fh;\Hom_\C(M,L))$ can be computed through the cochain complex
	\[
C^i := \Hom_\fh(\Lambda^i(\g/\fh)\otimes M, L)\cong \left( (\Lambda^i(\g/\fh)\otimes M \otimes L_*)^\fh\right)^* = C_i^*,
	\]
$C_i$ being the chain complex computing the relative homology $H_i(\g,\fh; M\otimes L_*)$.
\end{proof}

\begin{cor}\label{prop:local.triv.ext=global}
Let $M = \varinjlim M_n$ and $L = \varinjlim L_n$ be $\fh$-weight $\g$-modules, and assume that $L$ has finite-dimensional $\fh$-weight spaces. If $\Ext_{\g(n), \fh(n)}^i(M_n, L_n) = 0$ for all $n\gg 0$ then $\Ext_{\g, \fh}^i(M, L) = 0$.
\end{cor}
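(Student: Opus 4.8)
The plan is to pass from $\Ext$ to relative homology and exploit that homology commutes with filtered colimits. By Proposition~\ref{prop:ext.homology} we have $\Ext_{\g,\fh}^i(M,L)\cong (H_i(\g,\fh;M\otimes L_*))^*$, and the same identity holds at each finite level, giving $\Ext_{\g(n),\fh(n)}^i(M_n,L_n)\cong (H_i(\g(n),\fh(n);M_n\otimes (L_n)_*))^*$. Since a vector space is zero exactly when its dual is zero, it suffices to show that the relative homology $H_i(\g,\fh;M\otimes L_*)$ vanishes, given that each $H_i(\g(n),\fh(n);M_n\otimes (L_n)_*)$ vanishes for $n\gg 0$.

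First I would realize the relevant chain complex as a colimit. Relative homology is computed by $C_p=(\Lambda^p(\g/\fh)\otimes M\otimes L_*)^{\fh}$ with the Chevalley--Eilenberg differential. The weight embeddings give $\g/\fh=\bigcup_n \g(n)/\fh(n)$ and $M=\bigcup_n M_n$, and the differential is assembled entirely from $\g(n)$-brackets together with the $\g(n)$-actions on $M_n$ and on $L_*$; hence the subspaces $C_p^{(n)}:=(\Lambda^p(\g(n)/\fh(n))\otimes M_n\otimes L_*)^{\fh}$ form an exhaustive increasing chain of subcomplexes under the natural inclusions, so $C_\bullet=\varinjlim_n C_\bullet^{(n)}$. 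Because filtered colimits of complexes of vector spaces are exact, they commute with homology, and $H_i(\g,\fh;M\otimes L_*)=\varinjlim_n H_i(C_\bullet^{(n)})$.

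The crucial step is to identify $H_i(C_\bullet^{(n)})$ with the finite-level homology appearing in the hypothesis. Dualizing $C_\bullet^{(n)}$ exactly as in the proof of Proposition~\ref{prop:ext.homology}, and using that $L$ has finite-dimensional weight spaces (so that $(L^\mu)^{**}=L^\mu$ and $\Hom_\fh(-,L)$ converts the relevant direct sum into a product), one finds that $H_i(C_\bullet^{(n)})^*$ is the $i$-th cohomology of $\Hom_\fh(\Lambda^\bullet(\g(n)/\fh(n))\otimes M_n, L)$, which is $\Ext_{\g(n),\fh(n)}^i(M_n,L_n)$. Once this identification is in place, the hypothesis gives $H_i(C_\bullet^{(n)})=0$ for $n\gg 0$, whence $H_i(\g,\fh;M\otimes L_*)=\varinjlim_n 0=0$ and therefore $\Ext_{\g,\fh}^i(M,L)=0$.

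The hard part will be this identification, and it is the only place where the hypotheses are genuinely used. The subtlety is that the restricted dual is contravariant: the spaces $(L_n)_*$ form an \emph{inverse} system and $L_*$ maps to, rather than out of, each of them, so one cannot naively present $M\otimes L_*$ as the colimit of the $M_n\otimes (L_n)_*$. The fix is precisely to keep the single module $L_*$ fixed and filter only the factor $\Lambda(\g/\fh)\otimes M$, as in the second paragraph; what then remains is the bookkeeping that reconciles the fixed $L_*$ with $(L_n)_*$ and the $\fh$-invariance with $\fh(n)$-invariance. For $n\gg 0$ the finitely many weight spaces of $L$ entering a given homological degree already lie in $L_n$, and the extra directions of $\fh$ beyond $\fh(n)$ only pick out a weight-zero component; the finite-dimensionality of the weight spaces of $L$ is what makes both reconciliations work. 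Everything else in the argument---the colimit, its exactness, and the dualization---is formal.
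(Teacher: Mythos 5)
Your proof is correct and follows essentially the same route as the paper's: Proposition~\ref{prop:ext.homology} converts $\Ext_{\g,\fh}^i(M,L)$ into the dual of the relative homology $H_i(\g,\fh;M\otimes L_*)$, that homology is identified with a direct limit of finite-level homologies, and dualizing (which turns the direct limit into an inverse limit) reduces everything to the assumed vanishing of $\Ext_{\g(n),\fh(n)}^i(M_n,L_n)$. The paper asserts the middle identity $H_i(\g,\fh;M\otimes L_*)=\varinjlim H_i(\g(n),\fh(n);M_n\otimes L_n^*)$ in a single line, so your elaboration --- filtering $\Lambda^\bullet(\g/\fh)\otimes M$ while keeping $L_*$ fixed and then reconciling with $(L_n)_*$ via the finite-dimensionality of the weight spaces of $L$ --- is exactly the justification left implicit there; the only imprecision is that the finiteness you invoke holds per chain (each cycle meets only finitely many weight spaces of $L$), not per homological degree, since $\Lambda^i(\g(n)/\fh(n))\otimes M_n$ may have infinite support.
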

\begin{proof}
This follows directly from Proposition~\ref{prop:ext.homology}:
\begin{align*}
\Ext_{\g, \fh}^i(M, L)  & = (H_i(\g, \fh; M\otimes L_*))^*  \\
& =  (\varinjlim H_i(\g(n), \fh(n); M_n\otimes L_n^*))^* \\
& = \varprojlim (H_i(\g(n), \fh(n); M_n\otimes L_n^*)^*) \\
& =\varprojlim \Ext_{\g(n), \fh(n)}^i(M_n, L_n)=0. \qedhere
\end{align*}
\end{proof}

The following result reproves \cite[Theorem~3.7]{PS11}.

\begin{cor}\label{cor:local.triv.ext=global.lie.algebra}
Let $\g$ equal a direct limit of finite-dimensional semisimple Lie algebras. If $M = \varinjlim M_n$ and $L = \varinjlim L_n$, where $M_n$ and $L_n$ are finite-dimensional $\fh(n)$-weight $\g(n)$-modules and $L$ has finite-dimensional $\fh$-weight spaces, then $\Ext_{\g, \fh}^1(M, L) = 0$.
\end{cor}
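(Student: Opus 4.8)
The plan is to reduce the vanishing to the finite level and then invoke complete reducibility. By Proposition~\ref{prop:ext.homology} and its Corollary~\ref{prop:local.triv.ext=global} (applied with $i=1$), the hypotheses $M = \varinjlim M_n$, $L = \varinjlim L_n$ with $L$ having finite-dimensional weight spaces are exactly what is needed, so it suffices to show that $\Ext_{\g(n), \fh(n)}^1(M_n, L_n) = 0$ for all $n \gg 0$; in fact this will hold for every $n$.

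Now fix $n$. By assumption $\g(n)$ is a finite-dimensional semisimple Lie algebra and $M_n$, $L_n$ are finite-dimensional $\fh(n)$-weight modules. The key observation is that the category of finite-dimensional $\fh(n)$-weight $\g(n)$-modules coincides with the category of all finite-dimensional $\g(n)$-modules. Indeed, $\fh(n)$ is toral, hence contained in a maximal toral (Cartan) subalgebra $\ft(n)$ of $\g(n)$, and by the classical theory of finite-dimensional representations of semisimple Lie algebras $\ft(n)$ --- and a fortiori $\fh(n)$ --- acts semisimply on every finite-dimensional $\g(n)$-module. Thus every finite-dimensional $\g(n)$-module is automatically an $\fh(n)$-weight module.

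Consequently, any extension $0 \to L_n \to E \to M_n \to 0$ in the category of $\fh(n)$-weight modules has finite-dimensional total space $E$, which is therefore a finite-dimensional $\g(n)$-module. By Weyl's theorem on complete reducibility such an $E$ decomposes as $L_n \oplus M_n$, so the sequence splits and $\Ext_{\g(n), \fh(n)}^1(M_n, L_n) = 0$. Here one uses that the relative $\Ext^1$ of Proposition~\ref{prop:ext.homology} is computed by genuine extensions in the weight category, so the only point to check is that these extensions are finite-dimensional weight modules, which we have just verified. Combining this with the first step via Corollary~\ref{prop:local.triv.ext=global} yields $\Ext_{\g, \fh}^1(M, L) = 0$, as claimed.

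The argument presents no real obstacle: its entire content is the reduction to the finite level, for which Corollary~\ref{prop:local.triv.ext=global} does the work, followed by Weyl's theorem. The only subtlety worth flagging is the compatibility between the relative weight-$\Ext$ used throughout the appendix and the naive notion of a split extension of $\g(n)$-modules; this is resolved by the observation that finite-dimensional semisimple representations are automatically weight modules for any toral subalgebra, so that no extension can escape the finite-dimensional weight category where Weyl's theorem applies.
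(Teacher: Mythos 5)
Your proof is correct and follows exactly the route the paper intends: the corollary is stated immediately after Corollary~\ref{prop:local.triv.ext=global} precisely because it reduces to the vanishing of $\Ext_{\g(n),\fh(n)}^1(M_n,L_n)$, which holds by Weyl's complete reducibility theorem once one notes (as you do) that every finite-dimensional module over the semisimple Lie algebra $\g(n)$ is automatically an $\fh(n)$-weight module, so every extension in the weight category is a finite-dimensional $\g(n)$-module and splits. Your flagging of the compatibility between the relative $\Ext^1$ and genuine extensions in the weight category is a worthwhile precision, but the argument is the same as the paper's.
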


\begin{rem}
If $\g = \fosp(1|\infty)$, then Corollary~\ref{cor:local.triv.ext=global.lie.algebra} also holds, since the category of finite-dimensional $\fosp(1|2n)$-modules is semisimple for all $n\in \Z_{>0}$.
\demo
\end{rem}

\begin{rem}
We would like to point out also that Corollary~\ref{prop:local.triv.ext=global} does not hold without the assumption of finite-dimensionality of weight spaces. For instance, 
	\[
\Ext_{\mathbb{T}_{\fsl(\infty)}}^1(\C, \fsl(\infty))\neq 0
	\]
where $\mathbb{T}_{\fsl(\infty)}$ is the category of $\fsl(\infty)$-modules studied in \cite{PS11, D-CPS16, PSty11}.
\demo
\end{rem}


\bibliographystyle{alpha}
\bibliography{/Users/lucas/Dropbox/Research/Bib/bibliography.bib}

\end{document}